\title{\vspace{-1em}\Pnormal{}  categories}
\author{
  Sandra Mantovani\textsuperscript{*}
  \\
  \texttt{sandra.mantovani@unimi.it}
  \and
  Mariano Messora\textsuperscript{*}
  \\
  \texttt{mariano.messora@unimi.it}
}
\date{
\vspace{-.5em}
\footnotesize{\textsuperscript{*}Department of Mathematics, University of Milan, Via Cesare Saldini 50, 20133 Milan, Italy}
\vspace{-1em}
}
\begin{document}
\maketitle
\hrule\vspace{.7em}
\noindent 
\thispagestyle{empty}%
\noindent\textbf{Abstract.} In this paper we introduce the notion of (pointed) \emph{\pnormal{} category}, modelled after regular 
categories, but with the key notions of coequaliser and kernel pair replaced by those of cokernel and kernel. This framework provides a natural setting for extending certain classical results in algebra. We study the fundamental properties of \pnormal{} categories, including a characterisation in terms of a factorisation system involving normal epimorphisms, and a categorical version of Noether's so-called `third isomorphism theorem'. We also present a range of examples, with the category of commutative monoids constituting a central one. In the second part of the paper we extend \pnormal{}ity and its related properties to the non-pointed context, using kernels and cokernels defined relative to a distinguished class of trivial objects.
\vspace{.7em}
\hrule
\vspace{.7em}
\noindent \emph{Keywords:}  prenormal category; normal category; factorisation system; trivial objects 
\\
\emph{2000 MSC:} 18G50; 18A20; 18A32;  18E99; 20M32
\vspace{.7em}
\hrule
\renewcommand{\baselinestretch}{0.8}\normalsize
\tableofcontents
\renewcommand{\baselinestretch}{1}\normalsize
\vspace{.7em}
\bigskip
\hrule
\vspace{.5em}
\noindent Accepted for publication in \emph{Applied Categorical Structures}
\hfill December 2025
\restoregeometry
\section*{Introduction}
\addcontentsline{toc}{section}{Introduction}
\label{Intro}
It is a straightforward observation that in any additive category, regular epimorphisms and normal epimorphisms define the same class of maps: indeed, the coequaliser of a pair of morphisms is also the cokernel of their difference. Similarly, in this setting, a map is a monomorphism if and only if its kernel is trivial.
Consequently, if an additive category is also regular, normal epimorphisms and morphisms with a trivial kernel form a stable factorisation system, which coincides with the  one determined by regularity. This phenomenon -- in which normal epimorphisms coincide with regular epimorphisms and are pullback-stable -- is not limited to regular additive categories; it also occurs, for instance, in the category of groups, and, more in general, in any homological category (\cite{BBBOOK}), as well as in several other well-studied contexts, where it is known to give rise to a range of useful properties. These recurring features, along with the properties they entail, have motivated the definition of \emph{normal categories} (\cite{NORMAL}), denoting regular pointed categories in which every regular epimorphism is normal.

Although the notion of normal category indeed applies to numerous commonly encountered settings, it frames the behaviour of normal epimorphisms only in relation to regularity. 
In contrast, we wish to emphasize that many structural properties of normal categories can persist even outside the context of regular categories. In particular,
we observe that normal epimorphisms and maps with a trivial kernel can form a stable factorisation system entirely independently of any connection to the one given by regular epimorphisms and monomorphisms. A remarkable
example illustrating this independence is provided by the category of commutative monoids, where one can show that the two factorisation systems just mentioned coexist, are both stable, and yet  remain distinct, leading to several interesting consequences (see \zcref{mon-not-norm}). 
This example, along with others, suggests that the stability of normal epimorphisms can be meaningfully studied on its own, without relying on regularity or any identification with regular epimorphisms. 

Building on the above considerations, we propose a definition of (pointed) \emph{\pnormal{} category}, modelled on the classical definition of regular category, but where the key notions of coequaliser and kernel pair are replaced by those of cokernel and kernel: a \pnormal{} category is thus defined as a pointed, finitely complete category where cokernels of kernels exist and normal epimorphisms are pullback-stable (\zcref{pnormal-def}). This notion is strictly weaker than that of normal category (\cite{NORMAL}), as proven by the example of commutative monoids discussed above, and provides a natural setting for extending certain classical results in algebra. The aim of this paper is then to study \pnormality{} and its associated categorical properties in full generality,
irrespective of
regularity or any specific behaviour of regular epimorphisms. 
In particular, we prove that many properties that hold for regular categories can be carried over to \pnormal{} categories by replacing regular epimorphisms with normal epimorphisms, and monomorphisms with morphisms with a trivial kernel. Notably, we find a characterisation of \pnormal{} categories as those pointed, finitely complete categories where normal epimorphisms form (with maps with a trivial kernel) a stable factorisation system (\zcref{pnorm-fs}). Other interesting properties in \pnormal{} categories include the fact that pullback squares of normal epimorphisms are pushouts (\zcref{pb-is-po-pointed}), a pullback cancellation principle involving normal epimorphisms  (\zcref{pointed-pf-pb}) and a categorical version of Noether's so-called `third isomorphism theorem' (\zcref{Noether-3}). We also present new examples that fit within this broad framework, further exhibiting the independence of \pnormality{} and regularity (the two can coexist and be distinct, one can exist without the other, and so on).

The second part of this paper is devoted to a  generalisation of \pnormal{} categories to the non-pointed setting. A key motivation for this extension arises from the behaviour of \emph{slices}: a well-known and useful property of regular categories is their stability under taking slices. 
It is then natural to ask how \pnormal{} categories behave in this regard. However, the slice of a pointed category is generally no longer pointed, and as a result, the study of slices of \pnormal{} categories naturally leads us beyond the realm of pointed categories. To pursue this line of inquiry, we develop a non-pointed generalisation of \pnormality{} by 
adopting the standard approach of defining kernels and cokernels relative to some distinguished class $\tobj$ of trivial objects, rather than with respect to a zero object. This leads to a notion of \emph{\normal{} category} (\zcref{normal-def}), which reduces to standard \pnormality{} when the ambient category is pointed and $\tobj$ is the class of zero objects. 

This generalised approach indeed allows us to recover a form of slice stability: the slice $\slice C X$ of a \normal{} category $\cat C
$ over an object $X$ is naturally a $(\slice ZX)$-\pnormal{} category (\zcref{slice-is-normal}). Moreover, many of the key properties of \pnormal{} categories are retained in this broader setting, most notably the characterisation in terms of a stable factorisation system (\zcref{characterisation-fs}). These and other properties, along with the additional subtleties involved with working in a non-pointed setting, are investigated in \zcref{non-p-normal,sec-normal-fs,sec-properties,sec-ex-seq}. This non-pointed generalisation also greatly broadens the range of possible examples, with a detailed list provided in \zcref{Examples}. Moreover, this framework appears to be particularly well-suited to the study of certain characterisations of \emph{pretorsion theories} (see \cite{PRETORSION,HTT}) -- a direction that will be addressed in future work.

Other interesting approaches to both pointed and non-pointed  `normality' exist in the literature, including \cite{GRANDIS13,GRAN12,VOLPE,THOLEN,THOLEN25}.
In \cite{GRAN12,VOLPE}, the notions considered require the ambient category to be regular, while \cite{THOLEN} develops a notion of normality relative to a given proper stable factorisation system. In \cite{THOLEN25} the authors provide non-pointed notions of normal monomorphism and normal epimorphism that are not based on freely chosen `trivial objects', but are instead defined naturally in terms of initial and terminal objects in any category with enough limits and colimits.
Accordingly, the premises of these four approaches differ significantly from the one adopted in the present paper. In \cite{GRANDIS13}, the author considers an axiom of partial stability of (possibly non-pointed) normal epimorphisms, along with other additional hypotheses. \zcref[capfirst=true]{weakly-normal} is devoted to a comparison with this last notion.
\section{Preliminaries}
In this section we establish some notation and we recall a few known results which will be useful later. We begin by setting up the notation for certain special classes of arrows.
\begin{notation}
For any category $\cat C$ we use the following notation:
\begin{itemize}
    \item $\StrEpi$ denotes the class of strong epimorphisms in $\cat C$.
    \item $\RegEpi$ denotes the class of regular epimorphisms in $\cat C$. These are graphically represented using the special arrow `$\regepito$'.
    \item $\Mono$ denotes the class of monomorphisms in $\cat C$. These are graphically represented using the special arrow `$\monoto$'.
\end{itemize}
We also represent (ordinary) epimorphisms with the special arrow `$\epito$'. If moreover $\cat C$ is pointed we introduce the  following additional notation:
\begin{itemize}
    \item $\pNEpi$ denotes the class of normal epimorphism in $\cat C$. These are graphically represented using the special arrow `$\nepito$'.
    \item $\pNMono$ denotes the class of normal monomorphisms in $\cat C$. These are graphically represented using the special arrow `$\nmonoto$'.
    \item $\pTKer$ denotes the class of maps in $\cat C$ whose kernel is reduced to zero. These are graphically represented using the special arrow `$\tkerto$'.
\end{itemize}
\end{notation}
{
\newcommand{\lcl}{\mathscr{E}}
\newcommand{\rcl}{\mathscr{M}}
Next, we fix the terminology for factorisation systems that we will use throughout this paper. See for example \cite{JOY} for details.
\begin{definition}
Let $\cat C$ be any category. By \emph{factorisation system on $\cat C$} we mean a pair $(\lcl,\rcl)$ of classes of morphisms in $\cat C$ satisfying the following conditions:
\begin{enumerate}
    \item both $\lcl$ and $\rcl$ are closed under composition with isomorphisms;
    \item every map $f\colon X\to Y$ in $\cat C$ factors as $f=\comp e m$, with $e\in\lcl$ and $m\in\rcl$;
    \item every $f\colon X\to Y$ in $\lcl$ satisfies the \emph{left lifting property} with respect to every $f'\colon X'\to Y'$ in $ \rcl$, i.e.\ given any $x\colon X\to X'$ and any $y\colon Y\to Y'$ such that $\comp fy=\comp x{f'}$, there exists a unique $d\colon Y\to X'$ in $\cat C$ such that $\comp fd=x$ and $\comp d{f'}=y$ (we also say that $f'$ satisfies the \emph{right lifting property} with respect to $f$).
    \[
    \begin{tikzcd}
        X\arrow[r, "x"]\arrow[d, "f"'] & X'\arrow[d, "f'"]
        \\
        Y\arrow[r, "y"']\arrow[ur, "d"{description}] & Y'
    \end{tikzcd}
    \]
\end{enumerate}
(This is sometimes referred to as an \emph{orthogonal factorisation system}.)

Furthermore, the factorisation system $(\lcl,\rcl)$ is said to be \emph{stable} if the pullback of a map in $\lcl$ along any map in $\cat C$ is again in $\lcl$.   
\end{definition}
Classes of maps involved in factorisation systems enjoy many useful properties. We recall a few of them that we will need in the next sections.
\begin{proposition}
\label{fs-properties}
    Let $\cat C$ be a category and let $(\lcl,\rcl)$ be a factorisation system on $\cat C$. Then the following properties hold.
    \begin{enumerate}
        \item\label{fs-property-iso} If a map in $\cat C$ is both in $\lcl$ and $\rcl$, then it is an isomorphism.
        \item\label{fs-property}$\lcl$ is the class of maps in $\cat C$ satisfying the left lifting property with respect to all the maps in $\rcl$, and, dually, $\rcl$ is the class of maps in $\cat C$ satisfying the right lifting property with respect to all the maps in $\lcl$.
        \item\label{fs-property-cancel}If some composable maps $f$ and $g$ in $\cat C$ are such that $\comp f g$ and $f$ lie in $\lcl$, then $g\in\lcl$ as well. Dually, if $\comp fg$ and $g$ lie in $\rcl$, then $f\in\rcl$ as well.
    \end{enumerate}
\end{proposition}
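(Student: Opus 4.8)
The plan is to deduce all three statements from the two substantive axioms of a factorisation system, namely the existence of a factorisation and the orthogonality (lifting) axiom together with its \emph{uniqueness} clause, proving the parts in the order (1), (2), (3); the cancellation principle (3) comes out most cleanly once the lifting characterisation (2) is available.

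For (1), I would apply the lifting axiom to the commutative square whose two vertical edges are both $f$ and whose two horizontal edges are both the identity. Reading $f\in\lcl$ as the left-hand map and $f\in\rcl$ as the right-hand map, the axiom furnishes a diagonal $d$ with $\comp fd=\mathrm{id}_X$ and $\comp df=\mathrm{id}_Y$, which is precisely a two-sided inverse of $f$; hence $f$ is an isomorphism.

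The heart of the proposition is (2). One inclusion in each of the two equalities is immediate, since the lifting axiom says exactly that every map in $\lcl$ lifts on the left against every map in $\rcl$, and dually. For the reverse inclusion, suppose $f$ has the left lifting property against all of $\rcl$; I would factor $f=\comp em$ with $e\in\lcl$ and $m\in\rcl$ and then show that $m$ is an isomorphism. Lifting $f$ against $m$ across the square with top edge $e$ and bottom edge the identity yields a map $d$ with $\comp fd=e$ and $\comp dm=\mathrm{id}$; the latter equation already exhibits $m$ as a split epimorphism, and to obtain $\comp md=\mathrm{id}$ I would observe that both $\comp md$ and the identity are diagonals for the commuting square having $e$ as its top and left edges and $m$ as its bottom and right edges, and then invoke the uniqueness of the diagonal. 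With $m$ invertible, $f=\comp em$ becomes a composite of two maps of $\lcl$ (using that isomorphisms lie in $\lcl$ and that $\lcl$ is closed under composition), so $f\in\lcl$. The argument for $\rcl$ is entirely dual.

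Finally, for (3), assuming $\comp fg$ and $f$ belong to $\lcl$, I would establish $g\in\lcl$ via the characterisation just proved in (2), i.e.\ by checking that $g$ lifts on the left against an arbitrary $m\in\rcl$. Given a lifting problem for $g$ against $m$, precomposition with $f$ converts it into one for $\comp fg$ against $m$, which admits a unique diagonal $d$ since $\comp fg\in\lcl$. A short chase, powered once more by the uniqueness of diagonals (now for the square over $f$ and $m$), shows that this $d$ also solves the original problem for $g$ and is its only solution; hence $g\in\lcl$, and the dual cancellation statement for $\rcl$ follows symmetrically. I expect step (2) --- the passage from the lifting property back to membership in $\lcl$, where the uniqueness clause of orthogonality is what forces $m$ to be invertible --- to be the main obstacle, the rest being formal consequences of it.
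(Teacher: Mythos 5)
Your proof is correct, and it coincides with the standard argument: the paper itself states this proposition without proof, referring to \cite{BORCEUX94a}, and the argument given there (and in any standard reference) is exactly yours --- the diagonal trick for (1), the retract/factorisation argument with uniqueness of diagonals forcing $m$ to be invertible for (2), and the reduction of (3) to (2) via a diagonal chase. In particular you correctly handle the one subtle point, namely that the paper's lifting property includes uniqueness of the diagonal, which you verify both in the converse direction of (2) and in the cancellation step (3).
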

}
\section{The pointed case: \pnormal{} categories}
\label{pointed-case}
We now begin the main development of the paper by formally stating the definition of \pnormality{}, initially within the pointed setting, followed by a presentation of its fundamental properties.
While a more general, non-pointed version will be presented later, the pointed setting provides a more intuitive framework with less technicalities and includes many interesting examples, making it a natural starting point.

As previously discussed, the notion of \pnormality{} is based on that of regular categories, with cokernels taking the place of coequalisers and kernels that of kernel pairs. 
\begin{definition}
\label{pnormal-def}
    A \emph{\pnormal{}} category is a pointed category $\cat C$ such that:
    \begin{enumerate}
        \item $\cat C$ has finite limits;
        \item $\cat C$ has cokernels of kernels;
        \item $\cat C$ has pullback-stable normal epimorphisms.
    \end{enumerate}
\end{definition}
Let us also introduce the weaker notion of a \emph{\wpnormal{}} category, which is closely related to axioms considered in \cite{GRANDIS13} (see also \zcref{weakly-normal}).
\begin{definition}
A \emph{\wpnormal{}} category is a pointed category $\cat C$ such that:
\begin{enumerate}
    \item $\cat C$ has pullbacks along normal monomorphisms;
    \item $\cat C$ has cokernels of kernels;
    \item the pullback of a normal epimorphism along a normal monomorphism is again a normal epimorphism.
\end{enumerate}
\end{definition}
We also recall the definition of a \emph{normal} category from \cite{NORMAL}, as it will be referenced throughout the text.
\begin{definition}
    A \emph{normal} category is a pointed regular category $\cat C$ such that every regular epimorphism in $\cat C$ is a normal epimorphism. 
\end{definition}
With these definitions in place, we now proceed to establish a number of general properties of (semi-)\pnormal{} categories. Most proofs are deferred until  \zcref{non-p-normal}, where we study \pnormality{} in greater generality. Many of these properties mirror familiar results from the theory of regular categories (for example in \cite{BORCEUX94b,GRAN21}), but are new in this context. 
\begin{proposition}
\label{pnorm-fs}
    Let $\cat C$ be a pointed category with finite limits. The following are equivalent:
    \begin{enumerate}[(i)]
        \item\label{p-equiv-def-1} $\cat C$ is \pnormal{};
        \item\label{p-equiv-def-2} $\cat C$ admits \pfacsys{} as a stable factorisation system;
     \item $\cat C$ admits a stable factorisation system of the form $\bigl(\pNEpi,\mathscr M\bigr)$, for some class $\mathscr M$ of morphisms in $\cat C$;
     \item $\cat C$ admits a stable factorisation system of the form $\bigl(\mathscr E,\pTKer\bigr)$, for some class $\mathscr E$ of morphisms  in $\cat C$.
     \end{enumerate}
\end{proposition}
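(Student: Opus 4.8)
The statement is the \pnormal{} analogue of the classical characterisation of regular categories through their $(\StrEpi,\Mono)$-factorisation, and I would prove it along the same lines, the whole point being the identification of the two classes. The plan is to establish $(i)\Rightarrow(ii)$ (the substantive direction) together with the easy converse $(ii)\Rightarrow(i)$, to observe that $(ii)\Rightarrow(iii)$ and $(ii)\Rightarrow(iv)$ hold trivially (take $\mathscr M=\pTKer$, respectively $\mathscr E=\pNEpi$), and then to close the loop by proving $(iii)\Rightarrow(ii)$ and $(iv)\Rightarrow(ii)$. For $(ii)\Rightarrow(i)$ there is little to do: pullback-stability of $\pNEpi$ is exactly stability of the left class, while factoring an arbitrary $f$ with kernel $k$ as $f=m\circ e$ with $e\in\pNEpi$ and $m\in\pTKer$ forces $\ker e=\ker f=k$ (because $\ker m$ is trivial), so that $e$ is a cokernel of $k$, supplying the cokernels of kernels required by \cref{pnormal-def}.

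The core is $(i)\Rightarrow(ii)$. Given $f\colon X\to Y$, I would take its kernel $k=\ker f$, form $e$ as the cokernel of $k$ (available by axiom~(2) of \cref{pnormal-def}), and let $m$ be the induced factor with $f=m\circ e$; here $e\in\pNEpi$ by construction. That $m\in\pTKer$ is where pullback-stability (axiom~(3)) enters: pulling the normal epimorphism $e$ back along the kernel inclusion of $m$ yields a normal epimorphism, hence an epimorphism, which one cancels to force $\ker m=0$. Orthogonality $\pNEpi\perp\pTKer$ is then a direct chase: writing a normal epimorphism as the cokernel of its own kernel and using that the right-hand map has trivial kernel produces the unique diagonal. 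Repleteness, closure under composition, and containment of isomorphisms follow formally from factorisation together with orthogonality, and stability of the system is once more axiom~(3). This gives $(i)\Leftrightarrow(ii)$.

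For $(iii)\Rightarrow(ii)$ I must show that a stable system $(\pNEpi,\mathscr M)$ has $\mathscr M=\pTKer$. The inclusion $\pTKer\subseteq\mathscr M$ is immediate from the orthogonality just proved, since every trivial-kernel map lies in $(\pNEpi)^{\perp}=\mathscr M$. For the reverse, given $m\in\mathscr M$ with kernel $j\colon J\to\mathrm{dom}(m)$, I would test $m$ against the normal epimorphism $J\to 0$ (the cokernel of $\mathrm{id}_J$): the evident commutative square has a unique diagonal out of the zero object, which forces $j=0$ and hence $\ker m$ trivial. Thus $\mathscr M=\pTKer$ and $(iii)$ collapses to $(ii)$.

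The delicate implication is $(iv)\Rightarrow(ii)$, where a stable system $(\mathscr E,\pTKer)$ is given and I must prove $\mathscr E=\pNEpi$. Again $\pNEpi\subseteq\mathscr E$ is orthogonality; the substance is $\mathscr E\subseteq\pNEpi$, i.e.\ that every $e\in\mathscr E$ is the cokernel of its kernel. The first useful observation is that $\Mono\subseteq\pTKer$, so $\mathscr E$ is left-orthogonal to all monomorphisms; hence every map in $\mathscr E$ is a strong epimorphism, in particular an epimorphism, which settles uniqueness in the cokernel property. For existence I factor any test map $g$ killing $\ker e$ through the system and reduce to the key lemma: \emph{if $p,q\in\mathscr E$ satisfy $\ker p\le\ker q$, then $q$ factors through $p$}. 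To prove it I take the $(\mathscr E,\pTKer)$-factorisation $A\xrightarrow{\,w\,}W\rightarrowtail C\times C'$ of $\langle p,q\rangle$ (with $C,C'$ the codomains of $p,q$), observe that the two composites $\mu\colon W\to C$ and $\nu\colon W\to C'$ lie in $\mathscr E$ by the cancellation property of \cref{fs-properties}, and show $\mu$ is an isomorphism by proving $\ker\mu=0$. This last point is the main obstacle: one has $\ker p=w^{-1}(\ker\mu)$ and $\ker q=w^{-1}(\ker\nu)$, so the claim amounts to saying that pulling back along $w$ reflects the inclusion order of subobjects. This is exactly where stability is used essentially: since $w\in\mathscr E$ is a strong epimorphism and, by stability, its pullbacks along monomorphisms again lie in $\mathscr E$ (hence are strong epimorphisms), a diagonal-fill argument recovers $\ker\mu\le\ker\nu$ and therefore $\ker\mu=0$, so $\mu\in\mathscr E\cap\pTKer=\mathrm{Iso}$. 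With the key lemma in hand, every $e\in\mathscr E$ is the cokernel of its kernel, so $\mathscr E=\pNEpi$ and $(iv)$ reduces to $(ii)$, completing the cycle.
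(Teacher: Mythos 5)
Your proof is correct. For $(i)\Leftrightarrow(ii)$, $(ii)\Rightarrow(iii),(iv)$ and $(iii)\Rightarrow(ii)$ it follows essentially the same route as the paper, which establishes the non-pointed generalisation in \cref{norm-fs,characterisation-fs} and specialises to the pointed case: the factorisation is built as cokernel-of-the-kernel followed by the induced map, triviality of $\ker m$ comes from pulling back the normal epimorphism along $\ker m$ and cancelling, orthogonality is \cref{ne-ort-tk}, and your test object $J\to 0$ in $(iii)\Rightarrow(ii)$ is exactly the reflection unit used in the general proof. The genuine divergence is in $(iv)\Rightarrow(ii)$. The paper shows directly that each $f\in\mathscr E$ has the existence part of the cokernel property by factoring the pairing $\langle f,g\rangle$ of $f$ with an arbitrary test map $g$, and proves that the composite $\phi=\comp m{p_B}$ has trivial kernel by pulling back the coreflection of the codomain and invoking \cref{triv-cancel,charact-tker}; you instead first factor the test map, reduce everything to the comparison lemma ``$\ker p\le\ker q$ with $p,q\in\mathscr E$ implies $q$ factors through $p$'', and get $\ker\mu=0$ by showing that pulling back along a map of $\mathscr E$ reflects the order of kernels, using that $\mathscr E$-maps are left orthogonal to monomorphisms and that stability along monomorphisms keeps their pullbacks in $\mathscr E$. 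Both arguments hinge on the same pairing-into-a-product trick and both use stability essentially; yours trades the paper's coreflection bookkeeping (which is what makes the non-pointed version go through under subreflectivity of $\tobj$) for a subobject-reflection argument that is cleaner in the pointed case but would not transfer verbatim to the general setting. Two small points to tidy up: the second factor of $\langle p,q\rangle$ lies in $\pTKer$ but need not be a monomorphism, so depicting it as $W\monoto C\times C'$ is an abuse (harmless, since you only use that its kernel is trivial); and the step from $\ker\mu\le\ker\nu$ to $\ker\mu=0$ silently uses $\ker\mu\wedge\ker\nu=\ker\langle\mu,\nu\rangle=0$, which deserves to be said explicitly.
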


\begin{proposition}
\label{pb-is-po-pointed}
    In a \pnormal{} category, consider the following pullback square. 
    \begin{equation*}
    \begin{tikzcd}
    \cdot\arrow[r,"f'"]\arrow[d,"x"']&\cdot\arrow[d,"y"]
    \\
    \cdot\arrow[r,"f"']&\cdot
    \end{tikzcd}
    \end{equation*}
    The above diagram is also a pushout whenever one of the following conditions hold. \begin{enumerate*}[(a)]
        \item $f$ is a normal epimorphism. \item $x$ is a normal epimorphism, $f$ is a regular epimorphism, and $f'$ is an epimorphism. 
    \end{enumerate*}
\end{proposition}
\begin{proposition}
\label{pointed-pf-pb}
In a \pnormal{} category, consider then the following commutative diagram, where $f'$ is a normal epimorphism and both the outer rectangle and left-hand square are pullbacks. 
\begin{equation*}
\begin{tikzcd}[column sep = 3.7em]
    A\arrow[r,"f"]\arrow[d, "a"'] & B\arrow[r, "g"]\arrow[d,"b"'] & C\arrow[d, "c"']
    \\
    A'\arrow[r, "f'"', normepi] & B'\arrow[r, "g'"'] &C'
\end{tikzcd}
\end{equation*}
Then the right-hand square is also a pullback.
\end{proposition}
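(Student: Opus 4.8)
The plan is to form the pullback $P = B'\times_{C'}C$ of $g'$ along $c$, with projections $\pi_1\colon P\to B'$ and $\pi_2\colon P\to C$, and to let $\theta\colon B\to P$ be the comparison morphism determined by $\comp\theta{\pi_1}=b$ and $\comp\theta{\pi_2}=g$. Proving that the right-hand square is a pullback is then equivalent to proving that $\theta$ is an isomorphism. Since \cref{pnorm-fs} furnishes the stable factorisation system \pfacsys{}, by \cref{fs-properties}\ref{fs-property-iso} it suffices to show that $\theta$ lies in \emph{both} classes, i.e.\ that $\theta\in\pNEpi$ and $\theta\in\pTKer$.

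For the first membership I would identify the composite $\comp f\theta\colon A\to P$ as a pullback of $f'$. Pasting the defining (pullback) square of $P$ on the right of $\pi_2$, the outer rectangle of the resulting two-square diagram has top edge $\comp fg$ and bottom edge $\comp{f'}{g'}$, so it is precisely the given outer pullback; by the pullback pasting lemma the left square is a pullback, exhibiting $\comp f\theta$ as the pullback of $f'$ along $\pi_1$. Stability of normal epimorphisms then gives $\comp f\theta\in\pNEpi$, while the same stability applied to the left-hand square gives $f\in\pNEpi$. The left-class cancellation property \cref{fs-properties}\ref{fs-property-cancel}, applied to the pair $(f,\theta)$, yields $\theta\in\pNEpi$.

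The main obstacle is the second membership, namely $\ker\theta=0$, which I would establish by computing the kernel of $\comp f\theta$ in two ways. On one hand, since $\comp f\theta$ is a pullback of $f'$ along $\pi_1$, the projection $a\colon A\to A'$ carries $\ker(\comp f\theta)$ isomorphically onto $\ker f'$. On the other hand, pasting the kernel square of $\theta$ under $f$ shows that $\ker(\comp f\theta)$ is the pullback of $\ker\theta\monoto B$ along $f$; using that the left-hand square exhibits $A\cong B\times_{B'}A'$ and that the composite $\ker\theta\to B\to B'$ is zero (because $\comp\theta{\pi_1}=b$), this pullback reduces to $\ker\theta\times_{B'}A'\cong\ker\theta\times\ker f'$, with the projection to $A'$ corresponding to $\ker\theta\times\ker f'\xrightarrow{\mathrm{pr}_2}\ker f'\monoto A'$. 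Comparing the two descriptions forces $\mathrm{pr}_2\colon\ker\theta\times\ker f'\to\ker f'$ to be an isomorphism; since in a pointed category an isomorphic product projection forces the complementary factor to be zero, we conclude $\ker\theta=0$, i.e.\ $\theta\in\pTKer$.

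Combining the two memberships with \cref{fs-properties}\ref{fs-property-iso}, $\theta$ is an isomorphism and hence the right-hand square is a pullback. I expect the delicate point to be the bookkeeping in the last paragraph, namely checking that the two computations of $\ker(\comp f\theta)$ agree on their projections to $A'$; this is precisely where the hypothesis that the left square is a genuine pullback (and not merely commutative) is indispensable.
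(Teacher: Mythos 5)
Your proof is correct, but it establishes the key invertibility by a genuinely different mechanism than the paper. The setups coincide up to notation: the paper proves this statement via the non-pointed \cref{pf-pb-part} of \cref{pullback-lemmas} (specialised in \cref{normal-pullback-lemmas}), where one also forms the pullback $B'\times_{C'}C$, the comparison morphism into it, and the second pullback $A'\times_{B'}(B'\times_{C'}C)$, which is your pullback square exhibiting $\comp f\theta$ as a pullback of $f'$. The divergence is in how the comparison is shown to be invertible: the paper observes that it sits in a pullback square of normal (hence regular) epimorphisms whose other vertical edge is an isomorphism, and applies the pullback-is-pushout property (\cref{pb-is-po-pointed}, i.e.\ \cref{pb-po-part} of \cref{pullback-lemmas}) to conclude that the opposite edge is an isomorphism as well; you instead place $\theta$ in both classes of the factorisation system \pfacsys{} of \cref{pnorm-fs} -- in $\pNEpi$ by stability plus the left-cancellation \cref{fs-property-cancel} of \cref{fs-properties}, and in $\pTKer$ by your double computation of $\ker(\comp f\theta)$ -- and invoke \cref{fs-property-iso} of \cref{fs-properties}. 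Both of your delicate steps are sound: the two descriptions of $\ker(\comp f\theta)$ are restrictions of the same map $a$ along canonical identifications, so they agree and force the projection $\ker\theta\times\ker f'\to\ker f'$ to be invertible, and an invertible product projection in a pointed category does force the complementary factor to be zero. What your route buys is that it never mentions regular epimorphisms or coequalisers: it runs purely on the factorisation system, in keeping with the paper's aim of decoupling \pnormality{} from regularity, whereas the paper's route exploits that normal epimorphisms are regular (automatic in the pointed case, since a cokernel is a coequaliser with the zero map, but imposed as the explicit hypothesis $\NEpi\subseteq\RegEpi$ in the non-pointed \cref{normal-pullback-lemmas}). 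The trade-off is scope: your kernel splitting $\ker\theta\times_{B'}A'\cong\ker\theta\times\ker f'$ uses that the zero object is both initial and terminal, so your argument is intrinsically pointed and would not carry over verbatim to the $\tobj$-relative setting of \cref{non-p-normal}, where the paper's proof (under its regularity hypothesis) does.
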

\begin{proposition}
\label{ex-pnorm}
    Let $\cat C$ be a \pnormal{} category, and consider the following commutative diagram where $f$ is the kernel of $g$. \begin{equation*}
    \begin{tikzcd}
        A'\arrow[r,"{f'}"]\arrow[d,"a"']&B'\arrow[r,"{g'}"]\arrow[d,"b"']&C'\arrow[d,"c"']
        \\
        A\arrow[r, "f"', normmono]& B\arrow[r,"g"']& C
    \end{tikzcd}
    \end{equation*}
     If the left square is a pullback and $g'$ is the cokernel of $f'$, then $c\in\pTKer$.
    Conversely, if $c\in\pTKer$ and $f'$ is the kernel of $g'$, then the left square is a pullback.
\end{proposition}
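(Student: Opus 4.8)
The plan is to treat the two implications separately, in both cases exploiting the elementary fact that for a composite $B'\xrightarrow{\,b\,}B\xrightarrow{\,g\,}C$ the kernel of $gb$ is computed by pulling back $\ker g=f$ along $b$. Since the upper-left corner $A'$ of the left square is exactly the vertex of this pullback whenever the left square is a pullback, this single observation controls both directions, and it is available in any pointed category with the relevant limits.

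For the implication \textbf{from pullback to trivial kernel}, I would first note that, because $f=\ker g$ and the left square is a pullback, the map $f'$ is precisely $\ker(gb)$. As $g'$ is the cokernel of $f'$ by hypothesis and the right square commutes, I obtain $gb=cg'$ with $g'$ a cokernel of $\ker(gb)$. The idea is that this exhibits $c$ as the second factor in the factorisation of $gb$ through the cokernel of its kernel, and such a factor has trivial kernel. To make this rigorous I would invoke the stable factorisation system $(\pNEpi,\pTKer)$ of \cref{pnorm-fs}: factor $gb=me$ with $e\in\pNEpi$ and $m\in\pTKer$. Since $m$ has trivial kernel one checks $\ker(gb)=\ker e$, and since every normal epimorphism is the cokernel of its own kernel, $e$ is a cokernel of $f'$, hence agrees with $g'$ up to a canonical isomorphism. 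Because $g'$ is epic, this forces $c=m$ up to that isomorphism, so $c\in\pTKer$.

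For the converse, I would form the pullback $P=A\times_B B'$ with second projection $\pi_2\colon P\to B'$. By the same composite-kernel fact, $\pi_2$ is precisely $\ker(gb)$, and in particular a monomorphism. Now $gb=cg'$ and $c\in\pTKer$; using only the defining property of a trivial kernel (that $ct=0$ implies $t=0$), a short check gives $\ker(gb)=\ker(cg')=\ker(g')$, which equals $f'$ by hypothesis. Thus $\pi_2$ and $f'$ are two kernels of $g'$, so the canonical comparison map $A'\to P$ coincides with the unique isomorphism between these kernels (it does, because $\pi_2$ is monic), whence the left square is a pullback.

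The main obstacle is the first implication, where everything hinges on the fact that the morphism induced on the cokernel of the kernel has trivial kernel; this is exactly the information packaged in the factorisation system of \cref{pnorm-fs}, and the bridge that makes it applicable is the identification $f'=\ker(gb)$, which uses the pullback hypothesis. By contrast the converse is comparatively routine, the only point requiring care being to argue throughout with the trivial-kernel property itself rather than with monomorphisms, since in a \pnormal{} category a map with trivial kernel need not be monic.
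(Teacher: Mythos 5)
Your proposal is correct and follows essentially the same route as the paper's proof (of the non-pointed version, \cref{lemma-pb-ex-seq}): both directions hinge on identifying $f'$ with the kernel of $g\circ b$ via pullback-stability of kernels (\cref{stable-nm}), the first implication then reading off $c\in\pTKer$ from the $(\pNEpi,\pTKer)$-factorisation of $g\circ b$ (the paper cites the construction in \cref{norm-fs} directly where you factor abstractly and match the factors using that $g'$ is epic — a cosmetic difference), and the converse using only the cancellation property of trivial-kernel maps (\cref{charact-tker}) and the universal property of kernels.
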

\begin{proposition}
In a \pnormal{} category, consider the following commutative diagram where $f$ is a normal epimorphism, $(r_0,r_1)$ is the kernel pair of $f$ and $(s_0,s_1)$ is the kernel pair of $g$.
\begin{equation*}
   \begin{tikzcd}
        \cdot \arrow[r, yshift=.3em, "r_0"]\arrow[r,yshift=-.3em, "r_1"']\arrow[d]& \cdot\arrow[r,normepi, "f"] \arrow[d] & \cdot\arrow[d]
        \\
        \cdot \arrow[r, yshift=.3em, "s_0"]\arrow[r,yshift=-.3em, "s_1"']& \cdot\arrow["g"',r] & \cdot
    \end{tikzcd}
\end{equation*}
If either of the left-hand squares is a pullback, then the right-hand square is a pullback.
\end{proposition}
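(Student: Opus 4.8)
The plan is to prove this by exploiting the factorisation-system characterisation of \pnormality{} from \cref{pnorm-fs}, together with the pullback-cancellation principle of \cref{pointed-pf-pb}. The statement is essentially a ``Barr--Kock''-type theorem transported to the normal setting: it says that if the induced comparison on kernel pairs is a pullback, then the square on the right is a pullback too. By symmetry of the hypothesis (``either of the left-hand squares''), I expect the proof to reduce to a single case via the standard kernel-pair bookkeeping, so I would first fix conventions: write the right-hand square as
\[
\begin{tikzcd}
    X\arrow[r,normepi,"f"]\arrow[d,"u"'] & Y\arrow[d,"v"]\\
    X'\arrow[r,"g"'] & Y'
\end{tikzcd}
\]
and let $(r_0,r_1)\colon R\rightrightarrows X$, $(s_0,s_1)\colon S\rightrightarrows X'$ be the two kernel pairs, with an induced map $w\colon R\to S$ making the left-hand prisms commute.

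First I would use that $f$ is a normal epimorphism, hence by the characterisation \ref{p-equiv-def-2} of \cref{pnorm-fs} it lies in the left class of the stable factorisation system \pfacsys{}. The key structural fact I would lean on is \cref{pb-is-po-pointed}: a pullback square of normal epimorphisms is a pushout, so $f$ being the cokernel/coequaliser data of its kernel pair behaves well under the comparison. Concretely, since $f\in\pNEpi$ and normal epimorphisms are pullback-stable, the map $g$ will also turn out to be a normal epimorphism once the left square is a pullback, because normal epis are stable and the relevant square transports the normal-epi structure of $f$ downward; this lets me treat both horizontal rows as (coequalisers of) their kernel pairs.

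Next I would assemble the three-by-two diagram obtained by stacking the kernel-pair square on top of the given right-hand square and apply \cref{pointed-pf-pb} (the pullback-cancellation lemma). The template of that lemma is exactly a $2\times 3$ commutative diagram in which the left-hand square and the outer rectangle are pullbacks, with a normal epimorphism on the bottom-left, from which the right-hand square is deduced to be a pullback. I would arrange the rows so that the left-hand comparison square (the one assumed to be a pullback) and the composite square play the roles of ``left square'' and ``outer rectangle'' respectively; the normal-epi hypothesis on $f$ supplies the normal epimorphism required by \cref{pointed-pf-pb}. The cancellation then yields that the remaining square, which is precisely the right-hand square of the statement, is a pullback.

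The main obstacle I anticipate is purely organisational rather than conceptual: correctly identifying which of the several squares formed by the kernel pairs, the projections $r_0,r_1,s_0,s_1$, and the comparison maps should be fed into \cref{pointed-pf-pb} as the ``left square'' versus the ``outer rectangle'', and verifying that the hypothesis ``one of the two left-hand squares is a pullback'' indeed produces the configuration required by the cancellation lemma (in particular that the relevant map is a genuine pullback projection and that the normal epimorphism lands in the correct position). I would handle the symmetry between the two left-hand squares by noting that swapping the roles of $r_0,r_1$ and of $s_0,s_1$ is an automorphism of the whole diagram, so proving the claim for one choice suffices. Once the diagram chase is set up correctly, the conclusion follows formally from \cref{pointed-pf-pb} without further computation.
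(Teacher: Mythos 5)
You have correctly identified the key lemma -- the pullback-cancellation principle \cref{pointed-pf-pb} -- and this is indeed the paper's route (the paper reduces the statement to the classical Barr--Kock argument via that cancellation property). However, the way you propose to feed the diagram into \cref{pointed-pf-pb} does not work, and the step carrying the real content of the proof is missing. Write the right-hand square as $f\colon X\to Y$, $g\colon X'\to Y'$ with verticals $u\colon X\to X'$, $v\colon Y\to Y'$, and let $w\colon R\to S$ be the comparison between the kernel pairs. You propose to use the comparison square (the one assumed to be a pullback) as the ``left square'' of the cancellation lemma, with the composite as the ``outer rectangle''. In that arrangement the bottom-left horizontal map is $s_0$, not $f$, so the normal-epimorphism hypothesis of \cref{pointed-pf-pb} would have to be supplied by $s_0$ -- which is only a split epimorphism, not in general a normal one -- and, more seriously, you would still need to prove that the outer rectangle is a pullback, which is exactly the conclusion composed with the hypothesis and therefore cannot be assumed. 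You treat the outer rectangle as needing no verification; establishing it is the crux. (Your auxiliary claim that $g$ ``will turn out to be a normal epimorphism'' by stability is also unjustified: pullback-stability transports normal epimorphisms upward along a pullback, not downward, and the conclusion neither needs nor yields this.)

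The working arrangement is the transposed one. Apply \cref{pointed-pf-pb} to the diagram with top row $R\xrightarrow{r_0}X\xrightarrow{u}X'$, bottom row $X\xrightarrow{f}Y\xrightarrow{v}Y'$ and verticals $r_1$, $f$, $g$. Its left-hand square is the kernel-pair square of $f$, a pullback by definition, and its bottom-left map is $f$, which is where the normal-epimorphism hypothesis enters. Its outer rectangle has top edge $u\circ r_0=s_0\circ w$, so it also decomposes as the transpose of the \emph{other} comparison square (the one with $r_1$, $s_1$) followed by the kernel-pair square of $g$; the former is a pullback because the twist automorphisms of the two kernel pairs interchange the two comparison squares (this is the precise use of your symmetry observation), and the latter is a pullback by definition, so the outer rectangle is a pullback. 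The cancellation lemma then yields that the remaining square -- the transpose of the statement's right-hand square -- is a pullback. Without this double decomposition of the outer rectangle, the argument does not close.
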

\begin{proposition}
    Let $\cat C$ be a \pnormal{} category. Then the following properties hold.
    \begin{enumerate}
        \item The product of two normal epimorphisms is a normal epimorphism. Furthermore, the product of two exact sequences is again an exact sequence.
        \item Let $E\colon A\to B\to C$ be an exact sequence, and let $E'\colon A'\to B'\to C'$ be the sequence obtained by pulling back $E$ along a given map $c\colon C'\to C$. Then $E'$ is exact if and only if $c\in\pTKer$.
    \end{enumerate}
\end{proposition}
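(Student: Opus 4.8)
The plan is to treat the two parts separately, relying throughout on \cref{pnorm-fs}, which tells us that $\bigl(\pNEpi,\pTKer\bigr)$ is a stable factorisation system; in particular normal epimorphisms are pullback-stable and closed under composition. \emph{Part 1.} To show that the product $f\times f'$ of two normal epimorphisms $f\colon A\to B$, $f'\colon A'\to B'$ is again normal, I would factor it as
\[
A\times A'\xrightarrow{\,f\times\mathrm{id}_{A'}\,}B\times A'\xrightarrow{\,\mathrm{id}_B\times f'\,}B\times B',
\]
observing that $f\times\mathrm{id}_{A'}$ is the pullback of $f$ along the projection $B\times A'\to B$, while $\mathrm{id}_B\times f'$ is the pullback of $f'$ along the projection $B\times B'\to B'$. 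Pullback-stability puts both factors in $\pNEpi$, and closure of $\pNEpi$ under composition gives $f\times f'\in\pNEpi$. For a product of exact sequences $E\colon A\xrightarrow{f}B\xrightarrow{g}C$ and $E'\colon A'\xrightarrow{f'}B'\xrightarrow{g'}C'$, I would use that kernels, being limits, commute with products, so that $\ker(g\times g')=\ker g\times\ker g'$ with inclusion $f\times f'$; since $g\times g'$ is a normal epimorphism by the first part and any normal epimorphism coincides with the cokernel of its kernel, $g\times g'$ is the cokernel of $f\times f'$, and $E\times E'$ is exact.

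\emph{Part 2.} First I would pin down the construction: $B'=B\times_C C'$ is the pullback of $g$ along $c$, with projections $b\colon B'\to B$ and $g'\colon B'\to C'$, and $A'$ is the pullback of $f$ along $b$, with projections $a\colon A'\to A$ and $f'\colon A'\to B'$; equivalently $A'=A\times_C C'$, so that in the resulting ladder \emph{both} squares are pullbacks. As $g\in\pNEpi$ and these are pullback-stable, $g'\in\pNEpi$. For the forward implication, if $E'$ is exact then $g'$ is the cokernel of $f'$, the left-hand square is a pullback by construction, and the bottom row has $f=\ker g$; the earlier proposition asserting that, for $f=\ker g$, a pullback left-hand square together with $g'$ being the cokernel of $f'$ forces $c\in\pTKer$ then applies verbatim.

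For the converse, assume $c\in\pTKer$. The crucial observation is that $a\colon A'\to A$ is then an isomorphism: factoring the zero map $A\to C$ as $A\to 0\to C$ and pasting pullbacks exhibits $a$ as the pullback of the map $\ker c\to 0$ along $A\to 0$, which is an isomorphism since $\ker c=0$. Identifying $A'$ with $A$ along $a$, I have $bf'=f$, and from $c\,(g'f')=g\,(bf')=gf=0$ together with $\ker c=0$ I obtain $g'f'=0$. A short verification, using that the projections $(b,g')$ of the pullback $B'$ are jointly monic and that $f=\ker g$, then shows that $f'$ satisfies the universal property of $\ker g'$; since $g'\in\pNEpi$ coincides with the cokernel of its kernel, $g'$ is the cokernel of $f'$ and $E'$ is exact.

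The step I expect to be the main obstacle is not a computation but correctly identifying the construction: \emph{the sequence obtained by pulling back $E$} must be read as the one in which both squares are pullbacks (so that $A'=A\times_C C'$ may genuinely differ from $A$), for otherwise — e.g.\ taking $A'=\ker g'$ — the pulled-back sequence would be exact unconditionally and the stated equivalence would be empty. Once this is fixed, everything reduces to the isomorphism $A'\cong A$ for $c\in\pTKer$ and to the pullback-stability of normal epimorphisms.
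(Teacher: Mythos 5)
Your proof is correct and matches the paper's own argument (given in the general non-pointed setting as \cref{product-norm-epi} and \cref{stability-of-ex-seq}): the product of normal epimorphisms is decomposed into two pullbacks of normal epimorphisms along projections, exactness of the product follows from kernels commuting with products together with the fact that a normal epimorphism is the cokernel of its own kernel, and the forward direction of Part~2 is exactly the appeal to the earlier cancellation proposition (\cref{lemma-pb-ex-seq}), with the same reading of ``pulling back the sequence'' as making both squares pullbacks. The only local deviation is in the converse of Part~2: the paper observes that $f'$ is the kernel of $\comp{b}{g}=\comp{g'}{c}$ by stability of kernels under pullback (\cref{stable-nm}) and then cancels $c$ via \cref{charact-tker} to conclude that $f'$ is the kernel of $g'$, whereas you first show that $a$ is an isomorphism and verify the universal property of $\ker g'$ directly using the jointly monic pullback projections --- both routes are valid and essentially equivalent.
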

\begin{proposition}
\label{Noether-3}
    In a \pnormal{} category, Noether's third isomorphism theorem holds: given normal monomorphisms $m\colon M\nmonoto A$ and $n\colon N\nmonoto A$, with $n$ factoring through $m$, then the induced map on the cokernels $N/M\to A/M$ is a normal monomorphism whose cokernel $(A/M)/(N/M)$ is canonically isomorphic to the cokernel $A/N$ of $n$.
\end{proposition}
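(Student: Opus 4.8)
The plan is to realise the whole statement as the exactness of a single short sequence. Form the comparison map $j\colon M\to N$ with $m=n\circ j$ (this is the factorisation that makes $N/M=\operatorname{coker}(j)$ meaningful). The first observation is that $j$ is a normal monomorphism: writing $q_M\colon A\to A/M$ for the cokernel of $m$, a direct check of the universal property of kernels gives $j=\ker(q_M\circ n)$, the point being that $n$ is a monomorphism and $m=\ker(q_M)$. Hence $N/M:=\operatorname{coker}(j)$ exists in the \pnormal{} category, with projection $\pi\colon N\to N/M$. There are then two canonical comparison maps: the map $\iota\colon N/M\to A/M$ induced on cokernels by $q_M\circ n$ (which kills $j$), and the map $p\colon A/M\to A/N$ induced by $q_N\colon A\to A/N$ (which kills $m$, as $q_N\circ m=q_N\circ n\circ j=0$). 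With these in place, the statement reduces to proving that
\[
N/M\xrightarrow{\ \iota\ }A/M\xrightarrow{\ p\ }A/N
\]
is a short exact sequence: then $\iota$ is a normal monomorphism as the kernel of $p$, and its cokernel is $p=\operatorname{coker}(\ker p)\cong A/N$.

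The easy half is that $p$ is a normal epimorphism. By \cref{pnorm-fs}, $\bigl(\pNEpi,\pTKer\bigr)$ is a stable factorisation system, so the cancellation property recorded in \cref{fs-properties} applies: from $q_N=p\circ q_M$ with both $q_N$ and $q_M$ in $\pNEpi$ (they are cokernels), we conclude $p\in\pNEpi$. In particular $p$ is the cokernel of its own kernel, which is what will let us read off the cokernel of $\iota$ at the end.

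The substantial part is identifying $\iota$ with $\ker p$. Set $\ell:=\ker p\colon L\nmonoto A/M$. Since $p\circ\iota=0$ and $\pi$ is an epimorphism, $\iota$ factors uniquely as $\iota=\ell\circ u$ for some $u\colon N/M\to L$, and the goal is that $u$ be an isomorphism. The key device is the standard identity $\ker(p\circ q_M)=q_M^{-1}(\ker p)$: because $p\circ q_M=q_N$, this exhibits $n\colon N\nmonoto A$ (the kernel of $q_N$) as the pullback of $\ell$ along $q_M$, yielding a pullback square with top edge $n$, right edge $q_M$, bottom edge $\ell$ and left edge some $v\colon N\to L$ satisfying $\ell\circ v=q_M\circ n=\iota\circ\pi=\ell\circ u\circ\pi$, whence $v=u\circ\pi$. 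Here \pnormality{} enters decisively: $q_M$ is a normal epimorphism, so by stability its pullback $v$ is again a normal epimorphism; and since kernels are preserved under pullback, $\ker v\cong\ker q_M=M$ with inclusion $j$. Thus $v$ is a normal epimorphism with kernel $j$, so $v=\operatorname{coker}(j)$. As $\pi=\operatorname{coker}(j)$ as well, the comparison $u$ with $u\circ\pi=v$ is forced to be the canonical isomorphism between the two cokernels.

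This closes the argument: $\iota=\ell\circ u$ is isomorphic to the normal monomorphism $\ell=\ker p$, and $\operatorname{coker}(\iota)=\operatorname{coker}(\ell)=p\cong A/N$, which is precisely the canonical identification $(A/M)/(N/M)\cong A/N$. I expect the main obstacle to be the pullback identification in the third paragraph -- specifically, verifying that the pullback of $\ell$ along $q_M$ really has $n$ as its top edge and that $\ker v$ is carried isomorphically onto $M$ by $j$; once this geometry is pinned down, the stability of normal epimorphisms and the cancellation property of \cref{fs-properties} do the remaining work. (The pullback-and-exactness propositions preceding this statement could alternatively be invoked to shortcut parts of this step, but the route above is self-contained.)
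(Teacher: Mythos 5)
Your proof is correct, but it takes a genuinely different route from the paper. The paper never proves \cref{Noether-3} directly: the statement is deferred to \cref{weakly-normal}, where it is subsumed by \cref{noether-np} -- the equivalence between stability of normal epimorphisms along normal monomorphisms on one side, and composability of normal epimorphisms together with the third isomorphism theorem (exactness of the rows in \cref{Noet-iso-diagram}) on the other -- and the proof of that equivalence is in turn a citation of Grandis \cite{GRANDIS13}, the paper's only added ingredient being that composability of normal epimorphisms is automatic once they form the left class of a factorisation system. You instead give a complete, self-contained argument: packaging the theorem as exactness of $N/M\to A/M\to A/N$, identifying the comparison map with $\ker p$ via the pullback of $\ker p$ along $q_M$, and finishing with the cancellation property. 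Each step checks out: $j=\ker(\comp n{q_M})$ follows from \cref{stable-nm} and \cref{coker-of-ker}; $p\in\pNEpi$ follows from \cref{pnorm-fs} and \cref{fs-property-cancel} of \cref{fs-properties}; and $v$ is a normal epimorphism with kernel $j$, hence the cokernel of $j$, which forces $u$ to be an isomorphism. Two remarks. First, you silently (and rightly) corrected a slip in the statement: with ``$n$ factoring through $m$'' taken literally, the quotient $N/M$ does not parse; your convention $m=\comp jn$ is the one under which the stated conclusion is true, and it matches the paper's \cref{Noet-iso-diagram} up to swapping the letters. Second, your argument actually vindicates the paper's parenthetical claim that \wpnormality{} suffices: the only pullback of a normal epimorphism you take is along the normal monomorphism $\ell=\ker p$, and all pullbacks and cokernels used exist under \cref{wnorm-1,wnorm-2} of \cref{wnorm}; the single adjustment needed is that the cancellation step must then invoke the factorisation system of \cref{weakly-normal} (the weakly normal analogue of \cref{characterisation-fs}) rather than \cref{pnorm-fs}. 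What the paper's route buys is generality -- a non-pointed, two-way equivalence relative to a class of trivial objects -- at the price of outsourcing the core computation; what yours buys is a direct verification inside the pointed theory of \cref{pointed-case}.
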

\begin{proposition}
\label{fun-p}
    Let $\cat C$ be a \pnormal{} category. For any category $\cat B$, the functor category $\Fun B C$ is \pnormal{}.
\end{proposition}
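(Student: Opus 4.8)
The plan is to prove that the functor category $\Fun B C$ inherits \pnormality{} from $\cat C$ by working entirely pointwise, exploiting the fact that all the relevant structure (limits, kernels, cokernels, normal epimorphisms) in a functor category is computed objectwise. The key observation throughout is that a natural transformation $\alpha\colon F\Rightarrow G$ is a normal epimorphism in $\Fun B C$ precisely when each component $\alpha_b\colon Fb\to Gb$ is a normal epimorphism in $\cat C$, and similarly that a natural transformation lies in $\pTKer$ exactly when each of its components does. I would establish these two pointwise characterisations first, as they reduce every subsequent verification to the corresponding fact in $\cat C$.

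First I would record the easy structural facts: $\Fun B C$ is pointed, with the constant functor at the zero object of $\cat C$ serving as the zero object, and it has all finite limits (indeed all limits that $\cat C$ has), computed pointwise. Next I would verify the pointwise characterisation of kernels and cokernels: since limits and colimits in a functor category are computed objectwise when they exist objectwise, the kernel of a natural transformation $\alpha$ is the functor $b\mapsto \ker(\alpha_b)$ with the evident induced action on morphisms, and dually for cokernels of kernels. This shows at once that $\Fun B C$ has cokernels of kernels (condition (2) of \cref{pnormal-def}), because $\cat C$ does and the pointwise cokernel functor can be assembled. It also yields the claim that $\alpha$ is a normal epimorphism iff it is pointwise so: a normal epimorphism is a cokernel of some map, hence of a kernel, and being a cokernel is detected componentwise.

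With these identifications, the remaining task is pullback-stability of normal epimorphisms (condition (3)). Given a normal epimorphism $\alpha$ and an arbitrary natural transformation $\beta$ in $\Fun B C$, their pullback is computed componentwise, so its component at each $b$ is the pullback in $\cat C$ of the normal epimorphism $\alpha_b$ along $\beta_b$; by \pnormality{} of $\cat C$ this component is a normal epimorphism in $\cat C$, and by the pointwise characterisation the whole pulled-back transformation is a normal epimorphism in $\Fun B C$. This establishes all three conditions and hence \pnormality{}. Rather than verifying conditions (1)--(3) directly, one may alternatively invoke \cref{pnorm-fs} and show that the pointwise factorisation of each component assembles into a stable factorisation system $\bigl(\pNEpi,\pTKer\bigr)$ on $\Fun B C$; the functoriality of the factorisation in $\cat C$ makes the assembled factorisation natural.

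The main obstacle I anticipate is not in the pullback-stability argument, which is routine once the pointwise characterisations are in hand, but in rigorously justifying that the pointwise kernels and cokernels genuinely assemble into \emph{functors} and that the universal properties transfer levelwise. Concretely, one must check that for a morphism $b\to b'$ in $\cat B$ the kernel objects $\ker(\alpha_b)$ and $\ker(\alpha_{b'})$ receive a canonical connecting morphism making $b\mapsto\ker(\alpha_b)$ into a functor and the inclusions into a natural transformation that is genuinely the kernel in $\Fun B C$; the subtlety is purely the bookkeeping of naturality, since the existence of the objectwise (co)limits is guaranteed by $\cat C$. Once naturality of these constructions is confirmed, the detection of normal epimorphisms and of maps in $\pTKer$ componentwise follows formally, and the proof closes.
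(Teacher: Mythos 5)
Your proposal is correct and follows essentially the same route as the paper, which handles this statement via the more general non-pointed version (\cref{slice-is-normal}, proved in \cref{ex-fun-cat}): everything — limits, kernels, cokernels of kernels, and hence the detection of normal epimorphisms and of maps with trivial kernel — is computed and verified pointwise, so pullback-stability in $\Fun BC$ reduces to pullback-stability in $\cat C$. The only difference is cosmetic: the paper phrases the mono-coreflectivity of the trivial objects via whiskering with the coreflector, which in your pointed setting collapses to the observation that the constant zero functor is a zero object.
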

\begin{remark}
    Some of the properties of prenormal categories actually extend to semi-prenormal categories (in particular Propositions~\ref{ex-pnorm}, \ref{Noether-3}, \ref{fun-p} and a version of \zcref{pnorm-fs}; see \zcref{weakly-normal} for more details).
\end{remark}
\begin{remark}
    As mentioned in the introduction, the slice of a pointed category is not pointed, in general, and therefore the slice of a \pnormal{} category is in general not \pnormal{}. It is however \pnormal{} in the non-pointed sense discussed in \zcref{non-p-normal} (see \zcref{slice-is-normal}). 
\end{remark}

\Pnormality{} is in general independent of both regularity and of normality in the sense of \cite{NORMAL}, as illustrated in \zcref{cmon-is-normal,preord-cmon-is-normal,mon-not-norm,pocmon-is-normal}. However, the three notions are interconnected, as established in \zcref{protonormal-protomodular,pointed-pnormal-vs-normal,abelian-implies} below.
\begin{proposition}
\label{pointed-pnormal-vs-normal}
    Let $\cat C$ be a \pnormal{} category. Then the following are equivalent:
    \begin{enumerate*}[(i)]
        \item $\cat C$ is normal in the sense of \cite{NORMAL};
        \item $\StrEpi=\pNEpi$;
        \item $\RegEpi=\pNEpi$;
        \item $\Mono=\pTKer$.
    \end{enumerate*}
\end{proposition}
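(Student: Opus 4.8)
The plan is to leverage the stable factorisation system $(\pNEpi,\pTKer)$ supplied by \cref{pnorm-fs}, together with the inclusions that hold in any pointed category: $\pNEpi\subseteq\RegEpi\subseteq\StrEpi$ (a cokernel is the coequaliser of a map with $0$, hence a regular epimorphism, and regular epimorphisms are strong) and $\Mono\subseteq\pTKer$ (a monomorphism has trivial kernel). I would then establish the cycle of implications (i)$\Rightarrow$(ii)$\Rightarrow$(iii)$\Rightarrow$(iv)$\Rightarrow$(i).

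For (i)$\Rightarrow$(ii): if $\cat C$ is normal it is in particular regular, so it admits $(\RegEpi,\Mono)$-factorisations; a routine diagonal fill-in against such a factorisation shows that every strong epimorphism is regular, giving $\StrEpi=\RegEpi$, while normality is exactly the statement $\RegEpi=\pNEpi$. The implication (ii)$\Rightarrow$(iii) is then immediate from the inclusion chain, since $\pNEpi\subseteq\RegEpi\subseteq\StrEpi=\pNEpi$ forces all three classes to coincide.

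The crux is (iii)$\Rightarrow$(iv). The inclusion $\Mono\subseteq\pTKer$ is free, so I would take $m\in\pTKer$ and prove it is a monomorphism by showing that its kernel pair is trivial. Forming the kernel pair $(p_1,p_2)$ with diagonal $\Delta$, the projection $p_1$ is split by $\Delta$, hence is a split (so regular) epimorphism, hence normal by (iii); on the other hand a direct computation identifies $\ker p_1$ with $\ker m=0$, so that $p_1\in\pTKer$ as well. Since any map lying in both $\pNEpi$ and $\pTKer$ is an isomorphism (\cref{fs-properties}), $p_1$ is an isomorphism, which forces $p_1=p_2$ and hence $m\in\Mono$.

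Finally, for (iv)$\Rightarrow$(i): under $\Mono=\pTKer$ the factorisation system becomes $(\pNEpi,\Mono)$, and the earlier steps give $\pNEpi=\RegEpi=\StrEpi$; thus $\cat C$ is a finitely complete category carrying a stable $(\RegEpi,\Mono)$-factorisation system, which is one of the standard characterisations of a regular category, while $\RegEpi=\pNEpi$ says precisely that every regular epimorphism is normal --- so $\cat C$ is normal in the sense of \cite{NORMAL}. I expect the main obstacle to be the kernel-pair computation in (iii)$\Rightarrow$(iv) (pinning down the identification $\ker p_1\cong\ker m$ and then exploiting that $\pNEpi\cap\pTKer$ consists only of isomorphisms), together with the need to invoke the external characterisation of regularity via stable $(\RegEpi,\Mono)$-factorisations in the closing step.
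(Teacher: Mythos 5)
Your proof is correct and follows essentially the same route as the paper's (which handles this via its non-pointed generalisation in \cref{sec-properties}): the crux in both is the implication from $\RegEpi=\pNEpi$ to $\Mono=\pTKer$, obtained by looking at the kernel-pair projection of a map with trivial kernel, noting it is a split (hence regular, hence normal) epimorphism, and playing this off against the orthogonality of $\pNEpi$ and $\pTKer$ coming from \cref{pnorm-fs}. Your variant of realising $p_1$ as lying in $\pNEpi\cap\pTKer$ and invoking \cref{fs-properties} is just a repackaging of the paper's observation that maps with the right lifting property against regular epimorphisms are monomorphisms, so the two arguments coincide in substance.
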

\begin{proposition}
\label{protonormal-protomodular}
Let $\cat C$ be a pointed protomodular category (as in \cite{BBBOOK}). Then the following are equivalent:
\begin{enumerate*}[(i)]
    \item $\cat C$ is \pnormal{};
    \item $\cat C$ is homological;
    \item $\cat C$ is normal (in the sense of \cite{NORMAL});
    \item $\cat C$ is regular.
\end{enumerate*}
\end{proposition}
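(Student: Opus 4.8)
The plan is to observe that, thanks to the two characterisation results already available, the whole equivalence collapses onto a single fact about pointed protomodular categories, namely the identity $\Mono=\pTKer$. I would isolate this as a lemma: \emph{in any pointed protomodular category a morphism is a monomorphism if and only if its kernel is trivial}. One inclusion is automatic, since every monomorphism has trivial kernel. For the converse, let $f\colon A\to B$ satisfy $\ker f=0$ and form its kernel pair $(r_0,r_1)\colon R\rightrightarrows A$; the projection $r_0$ is a split epimorphism with section the diagonal $\Delta\colon A\to R$, and a direct computation gives $\ker r_0=0$, because $(0,a')\in R$ forces $a'\in\ker f=0$. Protomodularity applied to the split epimorphism $r_0$ then says that its kernel together with $\Delta$ is jointly strongly epimorphic; since that kernel is trivial, this forces $\Delta$ alone to be a strong epimorphism, hence an isomorphism (being also a split monomorphism). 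Thus $r_0=r_1$, which is exactly the statement that $f$ is a monomorphism.

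With the lemma in hand I would close a short cycle. For (i) $\Rightarrow$ (iii): assuming $\cat C$ is \pnormal{}, the lemma supplies $\Mono=\pTKer$, so by \cref{pointed-pnormal-vs-normal} the category is normal in the sense of \cite{NORMAL}. The implication (iii) $\Rightarrow$ (iv) is immediate, as normal categories are by definition regular, and (ii) $\Leftrightarrow$ (iv) is merely the definition of a homological category as a pointed regular protomodular one (our standing hypotheses already provide pointedness and protomodularity). Finally, for (iv) $\Rightarrow$ (i): a regular category carries the stable factorisation system $(\RegEpi,\Mono)$, and by the lemma this is precisely a stable factorisation system of the form $(\mathscr E,\pTKer)$ with $\mathscr E=\RegEpi$; since a regular category is pointed with finite limits, \cref{pnorm-fs} then yields \pnormality{}. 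Together with (ii) $\Leftrightarrow$ (iv), this establishes the four-way equivalence.

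The only genuine work lies in the lemma, and hence in the correct use of the protomodularity axiom; everything else is a formal consequence of \cref{pointed-pnormal-vs-normal} and \cref{pnorm-fs}. I expect the main subtlety to be the verification that $\ker r_0$ is trivial and the identification of $r_0=r_1$ with $f$ being a monomorphism, together with the standard fact that a morphism which is simultaneously a strong epimorphism and a monomorphism is an isomorphism. It is worth emphasising that routing the regular case through the factorisation-system characterisation \cref{pnorm-fs} deliberately avoids having to check directly either that homological categories possess cokernels of kernels or that their regular epimorphisms are normal, sidestepping both verifications.
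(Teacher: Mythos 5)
Your proof is correct, and it is organised rather differently from the paper's, which is a three-line argument: the authors simply invoke the two standard identities valid in pointed protomodular categories, $\Mono=\pTKer$ and $\RegEpi=\pNEpi$, observe that with these in hand \pnormality{}, normality and regularity become literally the same condition, and recall that homological means pointed, protomodular and regular. You instead isolate and actually prove the first identity, and your protomodularity argument is the standard correct one: the kernel of the kernel-pair projection $r_0$ of $f$ is isomorphic to $\ker f$, so when $\ker f$ is trivial the jointly strongly epimorphic pair formed by $\ker r_0$ and the diagonal $\Delta$ degenerates to $\Delta$ alone being a strong epimorphism, hence an isomorphism (it is a split mono), hence $r_0=r_1$ and $f$ is monic. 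More notably, you bypass the second identity entirely by routing the implication from regularity to \pnormality{} through the factorisation-system characterisation: once $\Mono=\pTKer$ is known, the stable $(\RegEpi,\Mono)$ system \emph{is} a stable system of the form $(\mathscr E,\pTKer)$, and \cref{pnorm-fs} does the rest. What this buys you is independence from the (true but not entirely trivial) fact that regular epimorphisms are cokernels in protomodular categories, which the paper takes for granted; the cost is that you must run a cycle of implications through \cref{pointed-pnormal-vs-normal} instead of collapsing the three definitions outright. The identity $\RegEpi=\pNEpi$ is then recoverable a posteriori from the uniqueness of the classes in a factorisation system rather than being an input. Both routes are sound; yours is the more self-contained of the two.
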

\begin{proof}
 In the presence of protomodularity we have
\[
\Mono=\pTKer, \qquad \RegEpi=\pNEpi.
\]
Therefore \pnormality, normality and regularity are the same notion in the presence of protomodularity. Then $\cat C$ is homological if it is protomodular and regular, by definition.
\end{proof}

We devote the rest of the section to a collection of various examples and non-examples.
\begin{example}
\label{abelian-implies}
    We have the following chain of implications:
    \begin{equation*}
        \text{abelian}\implies\text{semi-abelian}\implies\text{homological}\implies\text{normal}\implies\text{\pnormal{}}\implies \text{\wpnormal{}}.
    \end{equation*}
    Therefore any abelian, semi-abelian, homological or normal category is \pnormal{}.
\end{example}
\begin{example}
\label{cmon-is-normal}
{
The category $\catCMon$ of commutative monoids is \pnormal{}.

As is well known, $\catCMon$ is pointed, complete and cocomplete. It is also a regular category and regular epimorphisms are simply surjective monoid homomorphisms. Given a kernel $k\colon K\inclusion M$, the cokernel of $k$ is obtained by taking the quotient of $M$ by the congruence
\begin{equation*}
    x\sim y\textnormal{ if } x+a=y+b\textnormal{ for some } a,b\in K,
\end{equation*}
so that normal epimorphisms can be characterised as those surjective monoid homomorphisms $f\colon M\to N$ with the following additional property:
\begin{equation}
\zcsetup{reftype=property}
\label{cmon-normepi}    \mbox{}\hfil \begin{minipage}[m]{0.7\linewidth}
\centering
      for every $x,y\in M$, if $f(x)= f(y)$ then $x+a=y+b$ for some $a,b\in M$ such that $f(a)=f(b)=0$. 
\end{minipage}\hfil\mbox{}
\end{equation}
It is a simple check to verify that surjective maps satisfying \zcref{cmon-normepi} are stable under pullback. Therefore $\catCMon$ is indeed \pnormal{}.

Notice that \zcref{cmon-normepi} is not automatically satisfied by regular epimorphisms. Consider for example the operation of sum of natural numbers, seen as a map of commutative monoids
\begin{equation}
\label{Nsum}
    +\colon \naturals\times\naturals\to\naturals.
\end{equation}
Since this is a surjective map not satisfying \zcref[capfirst=true]{cmon-normepi}, we conclude that regular epimorphisms in $\catCMon$ do not coincide with normal epimorphisms, and therefore, $\catCMon$ is a regular pointed category which is \pnormal{} but not normal in the sense of \cite{NORMAL}. We then have two different stable factorisation systems: the familiar regular epi-mono factorisation system, plus the factorisation system described in \zcref{pnorm-fs}. The above map \ref{Nsum} is surjective and has a trivial kernel: it is thus in the left part of the first factorisation system and in the right part of the second.
}
\end{example}
{
\newcommand{\fm}[1]{\langle {#1}\rangle}
\newcommand{\fmr}[2]{\langle {#1}\,\vert\,{#2\rangle}}
\newcommand{\ew}{0}
\begin{example}
\label{mon-not-norm}
The category $\catMon$ of not necessarily commutative monoids is a variety of algebras which is \emph{not} \pnormal{} -- and not even \wpnormal{}.

Consider the following short exact sequence of monoids
\begin{equation*}
\begin{tikzcd}
K\arrow[r,hookrightarrow] & M\arrow[r, "p"] &N,
\end{tikzcd}
\end{equation*}
where $M=\fmr {x,y,z}{x^2=y^2}$ is the quotient of the free monoid on the generators $x$, $y$ and $z$ by the relation $x^2=y^2$, and $K=\fm z$ is the free monoid generated by $z$, seen as a submonoid of $M$. Clearly we have that $N=M/K=\fmr {x,y}{x^2=y^2}$. Consider now the submonoid $N'$ of $N$ consisting of those elements in $N$ where only an even number of $x$s appear (this is well-defined, because the relation $x^2=y^2$ does not change the parity of the number of $x$s). It is easy to check that $N'$ is a normal submonoid of $N$ (it is the kernel of the map $N\to\fmr{x}{x^2=\ew}$ given by $x\mapsto x$ and $y\mapsto\ew$). Call $M'$ the pullback of $p$ along the inclusion $N'\hookrightarrow N$. Obviously $M'$ consists of those elements of $M$ where only an even number of $x$s appear.

We claim that the pullback projection $p'\colon M'\to N'$ is not a normal epimorphism. The kernel of $p'$ is still $K$, but we show that the kernel pair of $p'$ is not the smallest equivalence relation $R'$ on $M'$ that satisfies $(w,0)\in R'$ for all $w\in K$. This smallest equivalence relation, $R'$, can be explicitly described as the transitive closure of the following relation $\sim$:
\begin{equation}
\label{big-rel}
\begin{minipage}[m]{0.8\linewidth}
for $a,a'\in M'$, we say $a\sim a'$ if we we can write 
\begin{itemize}
    \item $a=u_1k_1\cdots u_n k_n$,
    \item $a'=u_1'k_1'\cdots u_n'k_n'$,
    \item $u_1h_1\cdots u_nh_n=u_1'h_1'\cdots u_n' h_n'$,
\end{itemize}
for some $n\in\naturals$, $u_i, u_i'\in M'$ and $k_i,k_i',h_i,h_i'\in K$, for all $i\in\{1,\dots n\}$.
\end{minipage}
\end{equation}
\newcommand{\maxn}[2]{\chi_{#1}({#2})}Consider $a=xzx\in M'$ and $b=yzy\in M'$. Clearly $p'(b)=y^2=x^2=p'(a)$, so that $(a,b)$ is in the kernel pair of $p'$. However, one can show that $(a,b)\notin R'$. The idea of the proof is as follows. For each $w\in M'$ define  $\maxn xw$, the maximal number of $x$s that can appear in $w$ (for any $w\in M'$, $w$ can be rewritten using the relation $x^2=y^2$ in a finite number of ways, and so there is a way to write $w$ such that the number of $x$s is maximal). Similarly, define $\maxn yw$ for each $w\in M'$. By using the explicit description \ref{big-rel} of $\sim$ and the fact that the $u_i$s in \ref{big-rel} cannot contain an odd number of $x$s, one proves that if $w_1,w_2\in M'$ are such that $w_1\sim w_2$, $\maxn x{w_1}=2$ and $\maxn y{w_1}=0$, then $\maxn x{w_2}=2$ and $\maxn y{w_2}=0$ as well. Applying this to $a$, one finds that if it was the case that $(a,b)\in R'$, then $b$ would satisfy $\maxn y b=0$. But clearly $\maxn y b =2$. 
\end{example}
}
\begin{example}
\label{omega-cmon}
{
    The example of commutative monoids \ref{cmon-is-normal} can be generalised as follows. 
    Consider a classical algebraic theory $\Omega=(\Sigma,\Tau)$ consisting of a set $\Sigma$ of operations of finite positive arities and a set $\Tau$ of equational axioms.
    In analogy with $\Omega$-groups, we consider the notion of a (commutative) $\Omega$-monoid, defined as a quadruple $\bigl(M,+,0,(\sigma_M)_{\sigma\in\Sigma}\bigr)$ such that:
    \begin{itemize}
        \item $(M,+,0)$ is a (commutative) monoid;
        \item $\bigl(M,(\sigma_M)_{\sigma\in\Sigma}\bigr)$ is an $\Omega$-algebra -- i.e.\ for each $n$-ary  operation $\sigma\in\Sigma$, $\sigma_M\colon M^n\to M$ is a (set-theoretic) function, and the $\sigma_M$s satisfy the axioms specified by $\Tau$;
        \item for each $n$-ary operation $\sigma\in\Sigma$, the function $\sigma_M\colon M^n\to M$ is a monoid homomorphism in each variable separately (in other words, the $\sigma_M$s are distributive over the base monoid operation and have the base monoid unit as an absorbing element).
    \end{itemize}
    
Examples of commutative $\Omega$-monoids for various $\Omega$s include commutative monoids themselves (where $\Omega$ is empty), non-unital semirings (where $\Omega$ consists of one binary operation satisfying associativity) and $R$-semimodules for a fixed semiring $R$ (with $\Omega$ containing a unary operation for every element of $R$ and some additional axioms).  

 For a fixed theory $\Omega$, consider $\catOm$, the variety of commutative $\Omega$-monoids (here `variety' is again understood in the sense of universal algebra). It is easily verified that in such a category, normal epimorphisms are characterised in the same way as in $\catCMon$, and therefore $\catOm$ is \pnormal{}. 

Note that, for instance, the category $\catRg$ of non-unital semirings is another example of a category that is \pnormal{} but not normal in the sense of \cite{NORMAL}, because any commutative monoid $(M,+,0)$ can be seen as a non-unital semiring $(M,+,0,\ast)$ with $\ast$ given by $x\,\ast\, y=0$ for every $x,y\in M$. So the same counterexample seen in \zcref{cmon-is-normal} for $\catCMon$ holds for $\catRg$.
}
\end{example}
{
\newcommand{\pcmon}{\cat P}
\newcommand{\mord}{\le}
\begin{example}
\label{preord-cmon-is-normal}
Consider the category $\pcmon=\catPreordCMon$ of preordered commutative monoids. Its objects are  quadruples $(M,+,0,\mord)$ (which we will usually denote by just $M$) where $(M,+,0)$ is a commutative monoid, $(M,\mord)$ is a preordered set, and the following holds for every $x,y,a\in M$:
\begin{equation*}
    x\mord y\implies x+a\mord y+a.
\end{equation*}
Morphisms in $\pcmon$ are functions that are monoid morphisms and order morphisms at the same time. We prove that this category is \pnormal{} but not regular.

First, $\pcmon$ is pointed and limits in $\pcmon$ exist and are computed as in $\catCMon$. Given a kernel $k\colon K\inclusion M$ in $\pcmon$, the cokernel of $k$ is obtained by taking the cokernel $q\colon M\to Q$ in $\catCMon$ and equipping $Q$ with the preorder $\mord_Q$ given by
\begin{equation*}
\begin{minipage}[m]{0.5\linewidth}
\centering
$u\mord_Q v$ if there exist $x,y\in M$ such that $q(x)=u$, $q(y)=v$ and $x\mord_M y$.
\end{minipage}\hfil\mbox{}
\end{equation*}
The fact that $\mord_Q$ is transitive follows from the characterisation in \ref{cmon-normepi} of normal epimorphisms of commutative monoids. In fact, suppose $u\mord_Q v$ and $v\mord_Q w$. Then there exist $x,y,y',z\in M$ such that $q(x)= u$, $q(y)=q(y')=v$, $q(z)=w$, $x\mord_M y$ and $y'\mord_M z$. Since $q(y)=q(y')$, we find that $y+a=y'+a'$ for some $a,a'\in K$. Thus we obtain
\begin{equation*}
    x+a\mord_M y+a=y'+a'\mord_M z+a'.
\end{equation*}
Since $q(x+a)=q(x)=u$ and $q(z+a')=q(z)=w$, we deduce that $u\mord_Q w$ by definition. The rest of the proof that $q\colon M\to Q$ is indeed the cokernel of $k$ is now easy to carry out. We can then characterise normal epimorphisms in $\pcmon$ as those maps $f\colon M\to N$ satisfying \zcref{cmon-normepi} as well as the following.
\begin{equation}
\zcsetup{reftype=property} \label{preordcmon-normepi}
\begin{minipage}[m]{0.7\linewidth}
\centering
If $u\mord_N v$ for some $u,v\in N$, then there exist $x,y\in M$ such that $f(x)=u$, $f(y)=v$ and $x\mord_M y$.
\end{minipage}\hfil\mbox{}
\end{equation}
\zcref[capfirst=true]{cmon-normepi,preordcmon-normepi} are together pullback-stable, and so we conclude that $\pcmon$ is \pnormal{}.

To show that $\pcmon$ is not regular consider the following pullback square in $\pcmon$,
\begin{equation*}
\begin{tikzcd}
    D\arrow[r, "{p'}"]\arrow[d, "{i'}"',hookrightarrow]& C\arrow[d,hookrightarrow, "i"]
    \\
    A\arrow[r, "p"',regepi] & B
\end{tikzcd}
\end{equation*}
where the objects and maps are defined as follows.
\newcommand{\sord}{\preceq}
\newcommand{\sjoin}{\curlyvee}
\newcommand{\mjoin}{\vee}
\newcommand{\join}{\operatorname{join}}
\begin{itemize}[-]
    \item $A$ is the set $\{0,1,1',2\}$. We equip $A$ with the partial order  $\mord$ defined by
    \[
    0\mord 1,\quad 1'\mord 2
    \]
    and with the binary operation $\sjoin$, defined
as the join with respect to the the total order $\sord$ given by
    \(
    0\sord1\sord1'\sord2.
    \)
    One can check that with these definitions, $(A,\sjoin,0,\mord)$ is a preordered commutative monoid.
    \item $B$ is the set $\{0,1,2\}$ equipped with the total order $0\mord1\mord2$ and the binary operation $\mjoin$, defined as the join with respect to the same order $\mord$ on $B$. Clearly $(B,\mjoin,0,\mord)$ is a preordered commutative monoid. 
    \item The map $p\colon A\to B$ is defined as 
    \[
    p(0)=0,\quad p(1)=p(1')=1,\quad p(2)=2.
    \]
    It easy easy to check that $p$ is a regular epimorphism in $\pcmon$ (it is the coequalizer of its kernel pair).
    \item $C$ is the subset $\{0,2\}$ of $B$. We equip $C$ with the restrictions of the operation $\mjoin$ and the preorder $\mord$ of $B$, so that we have a full inclusion $i\colon C\inclusion B$.
    \item Finally, $(D,p',i')$ is the pullback of $p$ along $i$. One can easily check that we can take $D$ to be the set $\{0,2\}$ with the discrete preorder and the operation
    \[
    0+0=0,\quad 0+2=2+ 0=2,\quad 2+2=2;
    \] $i'$ is the inclusion of \{0,2\} into $\{0,1,1',2\}$ and $p'$ is the identity on the underlying set $\{0,2\}$. 
\end{itemize}
Since $p'$ is a monomorphism but not an isomorphism ($D$ and $C$ are not isomorphic as preordered sets), it cannot be a regular epimorphism. But as $p$ is a regular epimorphism, we conclude that $\pcmon$ is not regular.
\end{example}
}
\begin{example}
\label{pocmon-is-normal}
The category $\catPOCMon$ of partially ordered commutative monoids, like $\catPreordCMon$, is \pnormal{} but not regular.

Limits in $\catPOCMon$ are computed in the same way as in $\catPreordCMon$.
As shown in \cite{FACCHINI20}, $\catPOCMon$ is a reflective subcategory of $\catPreordCMon$ and the reflection has stable units. Given a kernel $k\colon K\inclusion M$ in $\catPOCMon$, it is easy to check that the cokernel of $k$ is obtained by first taking the cokernel $q\colon M\to Q$ of $k$ in $\catPreordCMon$, and then composing $q$ with a reflection $\rho_Q\colon Q\to rQ$ of $P$ in $\catPOCMon$:
\[
\begin{tikzcd}[row sep = 1em]
    K\arrow[r, hookrightarrow, "k"]& M\arrow[r, "q"]&Q\arrow[r, "\rho_Q"] & rQ.
\end{tikzcd}
\]
One can then verify that a map $f\colon M\to N$ of partially ordered commutative monoids is a normal epimorphism if and only if it factors in $\catPreordCMon$ as $f=\comp g\rho$, with $g\colon M\to N'$ a normal epimorphism in $\catPreordCMon$, and $\rho\colon N'\to N$ a reflection of $N'$ in $\catPOCMon$. Since normal epimorphisms are stable in $\catPreordCMon$ (by \zcref{preord-cmon-is-normal}) and the reflector $\catPreordCMon\to\catPOCMon$ has stable units, we conclude that normal epimorphisms are stable in $\catPOCMon$.

The fact that $\catPOCMon$ is not regular follows from the same counterexample given in \zcref{preord-cmon-is-normal}, as all the involved preorders are actually partial orders.
\end{example}
\begin{example}
\label{p-set}
    The category $\catpSet$ of pointed sets is pointed, complete and cocomplete. A map of pointed sets $f\colon(X,x_0)\to (Y,y_0)$ is a normal epimorphism if and only if for each $y\in Y$, if $y\neq y_0$ then the fibre $f^{-1}(y)$ has cardinality exactly 1. 
    
    In general, the pullback of a normal epimorphism is not necessarily a normal epimorphism again. Consider for example the set $2=\{0,1\}$ (pointed in $0$) and the terminal object $1=\{0\}$; then, the unique map $p\colon 2\to 1$ is a normal epimorphism, but the pullback of $p$ along itself is a product projection $2\times 2\to 2$, which is clearly not a normal epimorphism according to the above characterisation.
    
    However, it is easy to check that the pullback of a normal epimorphism along a normal monomorphism (actually along any monomorphism) is again a normal epimorphism. This makes $\catpSet$ a \wpnormal{} category which is not \pnormal{}.
\end{example}
\begin{example}
\label{mp-pos}
   Consider the category $\mPOS$ of \emph{minimally pointed posets}, whose objects are triples $(X,\le,x_0)$, where $(X,\le)$ is a poset and $x_0$ is a minimal element in $(X,\le)$, and whose morphisms are monotone functions that preserve the distinguished minimal element. A function in $\mPOS$ is a normal epimorphism if and only if it is one in both $\catpSet$ and $\catPreordCMon$. Using arguments similar to those in \zcref{pocmon-is-normal,p-set}, one can show that $\mPOS$ is \wpnormal{} but neither \pnormal{} nor regular.
\end{example}
We conclude this section by summing up in a table the main examples and non-examples we have considered, each with its properties highlighted.
\begin{center}
\begin{tblr}{width=.8\textwidth,
colspec={r|c|c|c|c|c|c|c|c|},
row{4,6}={gray!10!white},
vline{1}={3-6}{},
hline{1}={2-9}{},
hline{2}={2-9}{},
cell{1}{2-9} = {cmd={\sideways}},
cell{2}{2-9} = {indaco!3!white}
}
&$\,\catAb\,$&$\,\catCMon$&$\,\catMon\,$&$\,\catRg\,$&$\,\catPreordCMon\,$&$\,\catPOCMon\,$&$\,\catpSet\,$&$\,\mPOS\,$\\
&\ref{abelian-implies}&\ref{cmon-is-normal}&\ref{mon-not-norm}&\ref{omega-cmon}&\ref{preord-cmon-is-normal}&\ref{pocmon-is-normal}&\ref{p-set}&\ref{mp-pos}\\
\hline
Regular? &Yes&Yes&Yes&Yes&No&No&Yes&No\\
\Wpnormal{}?&Yes&Yes&No&Yes&Yes&Yes&Yes&Yes\\
\Pnormal{}?&Yes&Yes&No&Yes&Yes&Yes&No&No\\
Normal?&Yes&No&No&No&No&No&No&No
\\
\hline
\end{tblr}
\end{center}
As was already mentioned,  the notion of \pnormality{} for pointed categories, while interesting in its own right, can be generalised to the broader setting of (non-pointed) categories equipped with a class of trivial objects. The next sections are devoted to this generalisation, beginning with the necessary preliminaries.
\section{Categories with a distinguished class of trivial objects}
\label{sec-triv-obj}
This section contains the necessary background on kernels and cokernels defined relative to a class of trivial objects. We omit most of the proofs as they concern well-known results. A more in-depth discussion of this topic can be found for example in \cite{GRANDIS13,GRANDIS92,MARKI13}.

Throughout this section, we fix a category $\cat C$ and a class $\tobj$ of objects in $\cat C$, which we identify with the full subcategory of $\cat C$ spanned by those objects.
\begin{definition}
\label{Z-def}
     We say that a morphism in $\cat C$ is \emph{\nptrivial{}} if it factors through an object in $\cat Z$.  Given any map $f\colon A\to B$ in $\cat C$, a \emph{\npkernel{}} of $f$ is given by an object $K$ and a map $k\colon K\to A$ such that $\comp kf$ is \nptrivial{} and for any other map $x\colon X\to A$ such that $\comp xf $ is \nptrivial{}, there exists a unique $x'\colon X\to K$ such that $\comp{x'}k=x$.
    By a slight abuse of notation, we may refer to the kernel $(K, k)$ simply as $K$ or $k$, when the distinction is clear from context.
    Of course, \emph{\npcokernel{}s} are defined dually.
    A \emph{\npexact{} sequence} is given by a pair of composable maps 
\(
\begin{tikzcd}[cramped, sep=1.5em]
A\arrow[r, "f"]&B\arrow[r,"g"]&C
\end{tikzcd}
\)
    such that $(A,f)$ is the \npkernel{} of $g$ and $(C,g)$ is the \npcokernel{} of $f$.
\end{definition}
\begin{remark}
\label{coker-of-ker}
 Given a \npkernel{} $(A,f)$, if $f\colon A\to B$ admits a \npcokernel{} $(C,g)$, then $(A,f)$ is the \npkernel{} of $g$.
\end{remark}
\begin{proposition}
\label{triv-cancel}
    Let $f$ be the underlying map of a \npkernel{}. Then $f$ is a monomorphism. Furthermore, if $g$ is a map such that $\comp gf$ is defined and \nptrivial{}, then $g$ itself is \nptrivial{}.
\end{proposition}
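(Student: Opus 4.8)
The plan is to fix notation by letting $f\colon K\to A$ denote the kernel of some map $h\colon A\to B$, so that by \cref{Z-def} the composite $\comp fh$ is \nptrivial{} and every $x\colon X\to A$ with $\comp xh$ \nptrivial{} factors uniquely as $x=\comp{x'}f$. The single structural fact I would record at the outset is that the class of \nptrivial{} maps is stable under composition with arbitrary maps on either side: if a map factors through an object of $\tobj$, then so does any pre- or post-composite of it. One further consequence of the definition is worth isolating, namely that any map whose domain lies in $\tobj$ is automatically \nptrivial{} (factor it as the identity of its domain followed by itself). With these observations in hand the two claims become short.

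For the first claim, that $f$ is a monomorphism, I would argue exactly as for ordinary kernels. Given $a,b\colon X\to K$ with $\comp af=\comp bf$, set $x:=\comp af$. Then $\comp xh=\comp a{(\comp fh)}$ is \nptrivial{} because $\comp fh$ is, so the universal property of the kernel $f$ supplies a \emph{unique} $x'$ with $\comp{x'}f=x$; since both $a$ and $b$ satisfy this equation, $a=b$. This step is entirely routine.

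The cancellation statement is the substantive one. Suppose $g\colon Y\to K$ has $\comp gf$ \nptrivial{}, and choose a factorisation $\comp gf=\comp tj$ with $t\colon Y\to Z$, $j\colon Z\to A$ and $Z\in\tobj$. The key move is to observe that $j$ itself lifts through the kernel: since the domain $Z$ of $j$ lies in $\tobj$, the composite $\comp jh$ is \nptrivial{}, so the universal property of $f$ yields a (unique) $j'\colon Z\to K$ with $\comp{j'}f=j$. Substituting this into the factorisation gives $\comp gf=\comp t{(\comp{j'}f)}=\comp{(\comp t{j'})}f$, and cancelling the monomorphism $f$ (established in the first claim) leaves $g=\comp t{j'}$. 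As this exhibits $g$ as factoring through $Z\in\tobj$, it is \nptrivial{}, as required.

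The only real obstacle is the idea in the last paragraph: one must resist trying to deduce triviality of $g$ directly from triviality of $\comp gf$, and instead lift the \emph{middle} object of a chosen trivial factorisation back through the kernel. Everything then rests on the two easy preliminary facts — that maps out of $\tobj$ are \nptrivial{} and that $f$ is monic — so the ordering of the two claims (monomorphism first) is precisely what the cancellation argument needs.
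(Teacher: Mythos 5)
Your argument is correct and follows essentially the same route as the paper: factor the \nptrivial{} composite through an object of $\tobj$, lift the second leg of that factorisation through the kernel via its universal property (its composite with the ambient map is \nptrivial{} because its domain lies in $\tobj$), and then cancel the monomorphism $f$. The paper likewise omits the monomorphism claim as well known, so your only addition is spelling out that routine step.
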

\begin{proof}
The first property is well known.
For the second property, suppose $f$ is the \npkernel{} of a map $u$. Since $\comp gf$ is \nptrivial{}, we can write $\comp gf=\comp st$ for some maps $s$ and $t$ such that $\cod s=\dom t$ is in $\tobj$. Observe that $\comp tu$ is \nptrivial{}, and hence $t$ factors uniquely as $t=\comp vf$ for some map $v$. It follows that $\mcomp{s,v,f}=\comp st=\comp gf$. As $f$ is a monomorphism, we obtain $g=\comp sv$, which shows that $g$ is \nptrivial{}.
\end{proof}
\begin{notation}
\label{norm-mono-notation}
    We call \emph{\npnormmono{}s} the   monomorphisms in $\cat C$ underlying \npkernel{}s. We denote by $\NMono$ the class of \npnormmono{}s in $\cat C$, and we graphically represent them using the same special arrow `$\nmonoto$' we used for (ordinary) normal monomorphisms.
    Similarly, we call \emph{\npnormepi{}s} the   epimorphisms underlying \npcokernel{}s. We denote by $\NEpi$ the class of \npnormepi{}s, and we use again  the special arrow `$\nepito$' to graphically represent them.
\end{notation}
\begin{proposition}
\label{stable-nm}
    Let $f\colon X\to Y$ be a morphism in $\cat C$ and let  $k\colon K\to X$ be its \npkernel{}. Given any morphism $x\colon X'\to X$, if the pullback of $k$ along $x$ exists, then it is the \npkernel{} of the composite $\comp xf$. In particular, \npnormmono{}s are stable under pullbacks.
\end{proposition}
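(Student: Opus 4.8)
The plan is to verify directly that the pullback projection satisfies the universal property defining the \npkernel{} of $\comp x f$. Write the given pullback as a square with apex $P$ and projections $k'\colon P\to X'$ and $x'\colon P\to K$, so that $\comp{k'}x=\comp{x'}k$; the goal is to show that $(P,k')$ is the \npkernel{} of the composite $\comp x f\colon X'\to Y$.

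First I would check the triviality condition, i.e.\ that $\comp{k'}{(\comp x f)}$ is \nptrivial{}. By associativity and commutativity of the pullback square, $\comp{k'}{(\comp x f)}=\comp{(\comp{k'}x)}f=\comp{(\comp{x'}k)}f=\comp{x'}{(\comp k f)}$, and $\comp k f$ is \nptrivial{} because $k$ is the \npkernel{} of $f$. Since the class of \nptrivial{} maps is closed under pre- and post-composition (this is immediate from the definition: a map factoring through an object of $\tobj$ continues to do so after composing on either side), $\comp{x'}{(\comp k f)}$ is \nptrivial{}, as required.

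For the universal property, consider any $t\colon T\to X'$ with $\comp t{(\comp x f)}$ \nptrivial{}. Then the composite $\comp t x\colon T\to X$ satisfies $\comp{(\comp t x)}f=\comp t{(\comp x f)}$ \nptrivial{}, so the universal property of the \npkernel{} $k$ produces a unique $w\colon T\to K$ with $\comp w k=\comp t x$. The equality $\comp t x=\comp w k$ is precisely the compatibility making $(t,w)$ a cone over the cospan $x,k$, so the universal property of the pullback gives a unique $t'\colon T\to P$ with $\comp{t'}{k'}=t$ (and $\comp{t'}{x'}=w$); this is the desired factorisation through $k'$.

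The main obstacle is uniqueness, since a candidate factorisation $t''$ is a priori only constrained by $\comp{t''}{k'}=t$, whereas the pullback's own uniqueness needs both projections pinned down. To bridge this, I would compose the second projection with $k$ and compute
\[
\comp{(\comp{t''}{x'})}k=\comp{t''}{(\comp{x'}k)}=\comp{t''}{(\comp{k'}x)}=\comp{(\comp{t''}{k'})}x=\comp t x=\comp w k,
\]
using the pullback commutativity and $\comp{t''}{k'}=t$. Since $k$ is a monomorphism by \cref{triv-cancel}, this yields $\comp{t''}{x'}=w$, so $t''$ and $t'$ agree on both projections and the uniqueness clause of the pullback property forces $t''=t'$. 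This completes the verification that $(P,k')$ is the \npkernel{} of $\comp x f$. The closing statement is then immediate: every \npnormmono{} is the underlying map of some \npkernel{}, and the argument just given shows that its pullback along any morphism (when the pullback exists) is again the underlying map of a \npkernel{}, hence a \npnormmono{}.
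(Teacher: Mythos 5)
Your proof is correct: the paper explicitly omits the proof of this proposition as a well-known fact, and your direct verification of the universal property is exactly the standard argument one would supply. The only delicate point, uniqueness of the factorisation through $k'$ (since a candidate is a priori constrained only on the projection to $X'$), is handled properly by pinning down the other pullback projection via the monicity of $k$ from \cref{triv-cancel}.
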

\begin{remark}
\label{closed-under-retracts}
    The class of \nptrivial{} morphisms in $\cat C$ constitutes an \emph{ideal} of maps (\cite{EHRESMAN,GRANDIS92,MARKI13}). Replacing $\tobj$ with its closure under retracts (i.e.\ replacing $\tobj$ with the class $\tobj'$ of objects $X$ such that there exists a split monomorphism with domain $X$ and codomain in $\tobj$) does not change the corresponding ideal of trivial maps, nor the associated notions of kernel, cokernel and exactness. On the other hand, working with a class $\cat Z$ closed under retracts allows for cleaner statements of certain results. Accordingly, we may make the following assumption without loss of generality.
\end{remark}
From now on, and throughout this section, we assume that $\tobj$ is closed under retracts in $\cat C$.
\begin{proposition}
\label{monocoref}
     Given an object $B$ in $\cat C$ and a monomorphism $\cuni B\colon \corefl B\monoto B$, then $(\corefl B,\cuni B)$ is a coreflection of $B$ in $\tobj$ if and only if it is a \npkernel{} of $\id B$. 
    In this case, given maps 
    \(
    \begin{tikzcd}[cramped, sep=1.5em]
    K\arrow[r, "k"]&A\arrow[r,"f"]&B,
    \end{tikzcd}
    \)
    then $(K,k)$ is the \npkernel{} of $f$ if and only if there exists a (unique) map $h\colon K\to \corefl B$ such that the following square is a pullback.
    \begin{equation*}
    \begin{tikzcd}
    K\arrow[r, "h"]\arrow[d, "k"'] & \corefl B\arrow[d, "\cuni B"]
    \\
    A\arrow[r, "f"'] &B
    \end{tikzcd}
    \end{equation*}
    Finally, $\tobj$ is a mono-coreflective subcategory of $\cat C$ if and only if every identity morphism in $\cat C$ admits a \npkernel{}.
\end{proposition}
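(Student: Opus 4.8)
The plan is to prove the three assertions in sequence, with the first biconditional carrying the main weight and the third following formally from it. Throughout I would freely use that the underlying map of a $\tobj$-kernel is monic (\cref{triv-cancel}) and that $\tobj$ is closed under retracts.

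First I would treat the coreflection--kernel equivalence, fixing a monomorphism $\cuni B\colon\corefl B\monoto B$. If it is a coreflection of $B$ into $\tobj$, then $\corefl B\in\tobj$, so $\cuni B$ is itself $\tobj$-trivial, which is the first half of the $\tobj$-kernel condition for $\id B$ (recall $\comp{\cuni B}{\id B}=\cuni B$); and any $\tobj$-trivial $x\colon X\to B$, written $x=\comp ts$ through some object of $\tobj$, factors through $\cuni B$ by the coreflection property, uniquely because $\cuni B$ is monic. This is exactly the universal property of the $\tobj$-kernel of $\id B$, so I would conclude one implication.

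For the converse I would assume $(\corefl B,\cuni B)$ is the $\tobj$-kernel of $\id B$ (so $\cuni B$ is automatically monic by \cref{triv-cancel}) and the crucial step is to show $\corefl B\in\tobj$. Since $\cuni B$ is $\tobj$-trivial it factors as $\cuni B=\comp ts$ through some $T\in\tobj$; the map $s\colon T\to B$ has domain in $\tobj$, hence is $\tobj$-trivial, so by the kernel property it factors as $s=\comp{s'}{\cuni B}$. Substituting and cancelling the monomorphism $\cuni B$ yields $\comp t{s'}=\id{\corefl B}$, exhibiting $\corefl B$ as a retract of $T$; closure of $\tobj$ under retracts then gives $\corefl B\in\tobj$. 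Once this is established, every map $T'\to B$ with $T'\in\tobj$ is $\tobj$-trivial and so factors uniquely through $\cuni B$, which is the coreflection property. For the pullback characterisation I would work under the hypothesis that $(\corefl B,\cuni B)$ is the $\tobj$-kernel of $\id B$. The ``if'' direction is immediate from \cref{stable-nm}: a pullback of $\cuni B$ along $f\colon A\to B$ is the $\tobj$-kernel of $\comp f{\id B}=f$, so if the displayed square is a pullback its left leg $k$ is the $\tobj$-kernel of $f$. For the ``only if'' direction, given that $(K,k)$ is the $\tobj$-kernel of $f$, the map $\comp kf$ is $\tobj$-trivial, so the kernel property of $\cuni B$ produces a unique $h$ with $\comp h{\cuni B}=\comp kf$; to see the square is a pullback I would take a competitor $(p,q)$ with $\comp pf=\comp q{\cuni B}$, note the right-hand composite and hence $\comp pf$ is $\tobj$-trivial, obtain the unique mediating $w$ with $\comp wk=p$ from the kernel property of $k$, and check $\comp wh=q$ by cancelling $\cuni B$ and uniqueness of $w$ by cancelling $k$.

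Finally, mono-coreflectivity follows formally: if $\tobj$ is mono-coreflective then each $B$ has a monic coreflection, which is a $\tobj$-kernel of $\id B$ by the first part; conversely, if every $\id B$ admits a $\tobj$-kernel, that kernel is monic by \cref{triv-cancel} and a coreflection by the first part, so $\tobj$ is mono-coreflective. I expect the retract argument in the converse of the first part to be the only genuinely non-formal step --- it is precisely where closure of $\tobj$ under retracts is needed --- while everything else is routine manipulation of universal properties and cancellation of monomorphisms.
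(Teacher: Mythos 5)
Your proof is correct; the paper itself omits the proof of this proposition (it is among the "well-known results" of \cref{sec-triv-obj}), and your argument fills it in using exactly the ingredients the paper sets up for this purpose: the standing assumption that $\tobj$ is closed under retracts (which you correctly identify as the one essential, non-formal step, via the splitting $\comp t{s'}=\id{\corefl B}$), together with \cref{triv-cancel} and \cref{stable-nm}. All three parts — the coreflection/kernel equivalence, the pullback characterisation of general \npkernel{}s, and the mono-coreflectivity criterion — are handled correctly.
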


(In this paper, reflective and coreflective subcategories are always assumed to be closed under isomorphisms -- and therefore under retracts.)
\begin{proposition}
\label{eq-tker}
Given a \npnormmono{} $f\colon A\to B$ in $\cat C$, then the following are equivalent.
\begin{enumerate*}[(i)]
    \item The map $f$ is \nptrivial{};
    \item $A$ is in $\cat Z$;
    \item $(A,f)$ is the \npkernel{} of an isomorphism;
    \item $(A,f)$ is a coreflection of $B$ in $\cat Z$.
\end{enumerate*}
\end{proposition}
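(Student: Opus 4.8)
The plan is to prove the four statements equivalent via the cycle (i)$\Rightarrow$(ii)$\Rightarrow$(iv)$\Rightarrow$(iii)$\Rightarrow$(i), exploiting throughout that, since $f$ is a \npnormmono{}, it is by definition the underlying map of a \npkernel{} -- say the \npkernel{} of some $u\colon B\to D$ -- and in particular a monomorphism. I would also use freely that the \nptrivial{} maps form an ideal (\cref{closed-under-retracts}), so that any composite one of whose factors is \nptrivial{} is itself \nptrivial{}.

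For (i)$\Rightarrow$(ii) I would apply the cancellation clause of \cref{triv-cancel} to the pair $(\id A,f)$: since $\comp{\id A}{f}=f$ is \nptrivial{} by hypothesis and $f$ underlies a \npkernel{}, \cref{triv-cancel} forces $\id A$ to be \nptrivial{}, i.e.\ $\id A$ factors through some object of $\tobj$. This exhibits $A$ as a retract of an object of $\tobj$, whence $A\in\tobj$ by the standing retract-closure assumption. For (ii)$\Rightarrow$(iv) the strategy is to show that $f$ is not merely a \npkernel{} of $u$ but in fact a \npkernel{} of $\id B$, and then to invoke \cref{monocoref}. Concretely: $f=\comp{\id A}{f}$ factors through $A\in\tobj$, so $f$ is \nptrivial{}; and for any \nptrivial{} map $x\colon X\to B$ the composite $\comp xu$ is again \nptrivial{} by the ideal property, so $x$ factors uniquely through $f$ by the universal property of $f$ as the \npkernel{} of $u$. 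This is precisely the universal property making $(A,f)$ the \npkernel{} of $\id B$, and since $f$ is a monomorphism, \cref{monocoref} then identifies $(A,f)$ with the coreflection of $B$ in $\tobj$.

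The remaining two implications are immediate. For (iv)$\Rightarrow$(iii), \cref{monocoref} tells us that a coreflection $(A,f)$ of $B$ in $\tobj$ is exactly a \npkernel{} of $\id B$, and $\id B$ is an isomorphism. For (iii)$\Rightarrow$(i), if $(A,f)$ is the \npkernel{} of an isomorphism $\phi\colon B\to B'$, then $\comp f\phi$ is \nptrivial{} and hence $f=\comp{\comp f\phi}{\phi^{-1}}$ is \nptrivial{} by the ideal property. I do not expect a genuine obstacle; the only two points requiring care are the slightly clever use of the cancellation clause of \cref{triv-cancel} with $g=\id A$ in (i)$\Rightarrow$(ii), and the observation that the universal property of a \npkernel{} of $u$ automatically yields that of a \npkernel{} of $\id B$ once attention is restricted to \nptrivial{} test maps -- both resting on the ideal structure of the \nptrivial{} morphisms.
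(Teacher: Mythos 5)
Your proof is correct. The paper does not actually supply a proof of \cref{eq-tker} (the surrounding section explicitly omits proofs of these background facts as well known), so there is no in-paper argument to compare against; your cycle (i)$\Rightarrow$(ii)$\Rightarrow$(iv)$\Rightarrow$(iii)$\Rightarrow$(i) is a valid reconstruction using exactly the paper's own toolkit. The two pivotal steps are sound: applying the cancellation clause of \cref{triv-cancel} to $\id A$ and then using retract-closure of $\tobj$ (the standing assumption justified by \cref{closed-under-retracts}) gives (i)$\Rightarrow$(ii), and upgrading the universal property of $f$ as a \npkernel{} of some $u$ to that of a \npkernel{} of $\id B$ -- via the ideal property of \nptrivial{} maps -- correctly feeds into \cref{monocoref} for (ii)$\Rightarrow$(iv). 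No gaps.
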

\begin{notation}
We denote by $\TKer$ the class of maps whose \npkernel{} exists and is \nptrivial{} (according to any of the equivalent statements in \zcref{eq-tker}), and we graphically represent a map with a \nptrivial{} \npkernel{} by using again the special arrow `\tkerto'.
\end{notation}
\begin{proposition}
\label{charact-tker}
     Suppose $f$ is a map in $\cat C$ admitting a \npkernel{}. Then the \npkernel{} of $f$ is \nptrivial{} if and only if, for every map $g$ such that $\comp gf$ is defined and \nptrivial{}, the map $g$ itself is \nptrivial{}.
\end{proposition}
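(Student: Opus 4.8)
The plan is to derive both implications directly from the universal property of the \npkernel{} of $f$, together with the fact (recalled in \cref{closed-under-retracts}) that the \nptrivial{} maps form an ideal, i.e.\ they are closed under composition with arbitrary maps on either side. Throughout I would write $f\colon A\to B$ and denote by $k\colon K\to A$ its \npkernel{}, keeping in mind its two defining features: $\comp kf$ is \nptrivial{}, and every $x\colon X\to A$ with $\comp xf$ \nptrivial{} factors uniquely as $x=\comp{x'}k$.

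For the implication from right to left I would simply test the cancellation hypothesis on the map $g=k$ itself. Indeed $\comp kf$ is defined and \nptrivial{} by the very definition of \npkernel{}, so the hypothesis forces $k$ to be \nptrivial{}; but this is exactly the statement that the \npkernel{} of $f$ is \nptrivial{}. This is the same device already employed in the proof of \cref{triv-cancel}.

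For the converse I would assume $k$ \nptrivial{} and take an arbitrary $g$ with $\comp gf$ defined and \nptrivial{}. Since $\comp gf$ is defined we have $\cod g=A=\dom f$, so the universal property of $k$ supplies a factorisation $g=\comp{g'}k$ for some $g'$. Because $k$ is \nptrivial{} and the \nptrivial{} maps form an ideal, the composite $\comp{g'}k=g$ is \nptrivial{} as well, which is what we wanted.

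I do not expect any real obstacle here: the argument is short and rests only on the universal property of the \npkernel{} and the ideal structure of the \nptrivial{} maps. The single point requiring care is to recognise, in the right-to-left direction, that the cancellation condition should be tested on the kernel map $k$ itself, and to keep the diagrammatic composition order consistent so that the factorisation $g=\comp{g'}k$ is set up correctly.
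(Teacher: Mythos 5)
Your proof is correct and follows essentially the same route as the paper's: the forward direction factors $g$ through the trivial kernel via the universal property and uses the ideal property of trivial maps, while the reverse direction (which the paper dismisses as ``immediate'') is exactly your test on $g=k$. No gaps.
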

\begin{proof}
    Let $k$ be the \npkernel{} of $f$. If $k$ is \nptrivial{} and $g$ is such that $\comp gf$ is defined and \nptrivial{}, then $g$ factors through $k$, which is a \nptrivial{} map, and so $g$ is itself \nptrivial{}. The converse is immediate.
\end{proof}
\begin{proposition}
\label{ne-ort-tk}
In $\cat C$, \npnormepi{}s satisfy the left lifting property with respect to maps with \nptrivial{} \npkernel{}.
\end{proposition}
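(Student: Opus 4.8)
The plan is to produce the diagonal directly from the universal property of the cokernel, the only real work being to verify the hypothesis of that universal property. Fix a lifting problem given by a \npnormepi{} $f\colon X\to Y$ in $\NEpi$ and a map $f'\colon X'\to Y'$ in $\TKer$, together with $x\colon X\to X'$ and $y\colon Y\to Y'$ satisfying $\comp x{f'}=\comp fy$. Since $f\in\NEpi$, it is the \npcokernel{} of some map $k\colon K\to X$; in particular $\comp kf$ is \nptrivial{} and $f$ is an epimorphism. The diagonal $d\colon Y\to X'$ we seek must satisfy $\comp fd=x$, so the natural approach is to obtain $d$ as the unique factorisation of $x$ through the cokernel $f$, which exists precisely when $\comp kx$ is \nptrivial{}.

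The key step is therefore to show that $\comp kx$ is \nptrivial{}. Here I would compute, using the commutativity of the square and associativity, $\comp{(\comp kx)}{f'}=\comp k{\comp x{f'}}=\comp k{\comp fy}=\comp{(\comp kf)}y$. Since $\comp kf$ is \nptrivial{} and the \nptrivial{} maps form an ideal (\cref{closed-under-retracts}), the composite $\comp{(\comp kf)}y$ is \nptrivial{}, and hence so is $\comp{(\comp kx)}{f'}$. Now I would invoke \cref{charact-tker} for the map $f'\in\TKer$: since $\comp{(\comp kx)}{f'}$ is \nptrivial{}, the map $\comp kx$ is itself \nptrivial{}. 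This is exactly the condition needed to apply the universal property of the cokernel.

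With $\comp kx$ \nptrivial{}, the universal property of the \npcokernel{} $(Y,f)$ yields a unique $d\colon Y\to X'$ with $\comp fd=x$. It remains to check the second triangle $\comp d{f'}=y$, and here I would use that $f$ is an epimorphism: from $\comp f{(\comp d{f'})}=\comp{(\comp fd)}{f'}=\comp x{f'}=\comp fy$ and the fact that $f$ is epic, we conclude $\comp d{f'}=y$. Uniqueness of the diagonal is then immediate, since any filler satisfies $\comp fd=x$ and $f$ is epic (equivalently, by the uniqueness part of the cokernel property). I do not expect a genuine obstacle: the argument is the dual of the familiar fact that an epimorphism presented as a cokernel lifts against maps satisfying the appropriate kernel condition, and the only point requiring care is the correct use of the cancellation property \cref{charact-tker} in the triviality computation — keeping the composition order straight so that it is $f'$ that gets cancelled off $\comp kx$, rather than the reverse.
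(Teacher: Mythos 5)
Your argument is correct and follows essentially the same route as the paper's proof: establish that $\comp kx$ is \nptrivial{} by composing with $f'$ and cancelling via \cref{charact-tker}, then factor through the \npcokernel{} and use epimorphicity of $f$ for the second triangle and uniqueness. The only difference is cosmetic — you spell out the ideal-property step that the paper leaves implicit.
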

\begin{proof}
    Consider the following commutative square,
    \begin{equation*}
    \begin{tikzcd}
    \cdot \arrow[r,"v"] \arrow[d,normepi, "f"'] &\cdot\arrow[d,tker, "g"]
    \\
    \cdot \arrow[r,"u"']&\cdot
    \end{tikzcd}
    \end{equation*}
    where $f$ is the \npcokernel{} of a map $a$ and and $g$ is a map whose  \npkernel{} $b$ is \nptrivial{}. Since $\mcomp{a,v,g}$ is \nptrivial{}, it follows from \zcref{charact-tker} that $\comp av$ is \nptrivial{} as well. Hence, there exists a unique map $d$ such that $v=\comp fd$, and by the epimorphicity of $f$, we further deduce that $\comp dg =u$.
\end{proof}
\section{Non-pointed \pnormal{} categories}
\label{non-p-normal}
Building on the preparatory material of the previous section, we are now ready to formalise the generalisation of \pnormal{} categories to the non-pointed setting.
\begin{definition}
\label{normal-def}
Let $\cat C$ be a category and $\cat Z$ a subcategory. We say that $\cat C$ is \emph{\pnormal{} with respect to $\tobj$}, or that $\cat C$ is \emph{\normal{}}, if the following properties hold.
\begin{enumerate}[start=0]
    \item $\tobj$ is mono-coreflective in $\cat C$.
    \item $\cat C$ has finite limits.
    \item $\cat C$ has \npcokernel{}s of \npkernel{}s.
    \item \npnormepi{}s are pullback-stable in $\cat C$.
\end{enumerate}
In what follows, when we say that $\cat C$ is a \normal{} category, it is understood that $\tobj$ is a subcategory of $\cat C$, and that $\cat C$ and $\tobj$ satisfy the conditions stated above.
\end{definition}
\begin{remark}
\label{kernel-in-preno}
Since (pointed) kernels are limits, in (pointed) \pnormal{} categories the existence of kernels follows  by finite completeness. In the non-pointed case the situation is less straightforward: the existence of \npkernel{}s of identity maps is not automatic, but it is equivalent to the mono-coreflectivity of $\tobj$, by \zcref{monocoref}. Once the existence of such kernels is ensured, the existence of every other \npkernel{} follows from finite completeness, again by \zcref{monocoref}.
\end{remark}

We can now state one of the main features of \pnormal{} categories in the non-pointed case.
\begin{proposition}
\label{norm-fs}
Let $\cat C$ be a \normal{} category. Then $\cat C$ admits the pair \facsys{} as a stable factorisation system.
\end{proposition}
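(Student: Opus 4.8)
The plan is to verify, for the pair $(\NEpi,\TKer)$, the four axioms of a factorisation system together with stability, using the machinery of \cref{sec-triv-obj}. Two of the requirements are immediate. Pullback-stability of $\NEpi$ is exactly the third condition of \cref{normal-def}, so stability needs no further argument. The orthogonality axiom is supplied by \cref{ne-ort-tk}, which gives the left lifting property of \npnormepi{}s against every map in $\TKer$; the diagonal is automatically unique, since any two fillers become equal after precomposition with the \npnormepi{} on the left, and \npnormepi{}s are epimorphisms. What remains is to construct the factorisations and then to deduce that both classes contain the isomorphisms and are closed under composition.

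For the factorisation of a map $f\colon A\to B$, I would first form its \npkernel{} $k\colon K\to A$, which exists because $\cat C$ has finite limits and $\tobj$ is mono-coreflective (\cref{monocoref}), and then the \npcokernel{} $e\colon A\to I$ of $k$, which exists by the second condition of \cref{normal-def} and lies in $\NEpi$. As $k$ is the \npkernel{} of $f$, the composite $\comp kf$ is \nptrivial{}, so $f$ factors uniquely through $e$ as $f=\comp em$. The whole difficulty is then concentrated in showing that $m\in\TKer$.

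To that end I would first record a cancellation property: \emph{if $q$ is a \npnormepi{} and $\comp qh$ is \nptrivial{}, then $h$ is \nptrivial{}.} This is proved by writing $q$ as the \npcokernel{} of a map $a$, factoring the \nptrivial{} composite $\comp qh$ through the coreflection $\cuni D\colon\corefl D\to D$ of the codomain $D$ of $h$, cancelling the \npnormmono{} $\cuni D$ by means of \cref{triv-cancel} to see that the induced map into $\corefl D$ composes trivially with $a$, and finally using the universal property of the \npcokernel{} $q$ together with the fact that $q$ is epic (so that $h$ factors through $\corefl D\in\tobj$). Granting this, $m\in\TKer$ follows from \cref{charact-tker}: it is enough to show that $m$ reflects triviality, i.e.\ that $\comp gm$ \nptrivial{} forces $g$ \nptrivial{}. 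Pulling back $e$ along such a $g$ yields, by stability, a \npnormepi{} $p$; the defining property of the \npkernel{} $k$ of $f$ combined with $f=\comp em$ shows that $\comp pg$ is \nptrivial{}, and the cancellation property then gives that $g$ is \nptrivial{}. This step, where pullback-stability of \npnormepi{}s is genuinely used, is the one I expect to be the main obstacle.

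To obtain the remaining axioms at once, I would identify the two classes as orthogonal complements: $\NEpi$ as the class of maps with the left lifting property against all of $\TKer$, and $\TKer$ as the class of maps with the right lifting property against all of $\NEpi$. The inclusions from left to right are \cref{ne-ort-tk}; the reverse inclusions follow from the factorisation just built by the standard retract argument, in which a map lying in the complement is orthogonal to the right (respectively left) factor of its own factorisation, forcing that factor to be an isomorphism — using in each case that \npnormepi{}s are epimorphisms, together with the evident closure of each class under composition with isomorphisms. Since an orthogonal complement always contains the isomorphisms and is closed under composition, the first two axioms of a factorisation system follow, completing the verification that $(\NEpi,\TKer)$ is a stable factorisation system.
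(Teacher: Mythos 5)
Your argument is correct, and its core coincides with the paper's: both factor $f$ as the \npcokernel{} $e$ of its \npkernel{} $k$ followed by the induced map $m$, and both reduce the key point $m\in\TKer$ to pullback-stability of \npnormepi{}s plus the cancellation ``if $q$ is a \npnormepi{} and $\comp qh$ is \nptrivial{}, then $h$ is \nptrivial{}'', which is exactly the dual of \cref{triv-cancel} (the paper simply cites that dual; your proof of it via the coreflection of the codomain is correct, although dualising the proof of \cref{triv-cancel} verbatim is shorter and needs no coreflection). The differences are organisational rather than substantive. For $m\in\TKer$ the paper works directly with the \npkernel{} $h$ of $m$: by \cref{stable-nm} the square with $k$ on top and $h$ on the bottom is a pullback, so the induced comparison map is a \npnormepi{} whose composite with $h$ is the \nptrivial{} map $\comp ke$, and the cancellation kills $h$; your version, which tests $m$ against an arbitrary $g$ via \cref{charact-tker} and pulls back $e$ along $g$, specialises to the paper's argument upon taking $g=h$, at the cost of one extra appeal to \cref{charact-tker}. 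For the remaining axioms, the paper runs two explicit lifting arguments showing that, in the factorisation of a composite of two \npnormepi{}s (resp.\ of two maps in $\TKer$), the $\TKer$-factor (resp.\ the $\NEpi$-factor) is a split mono which is also epi, hence an isomorphism; your retract argument identifies each class outright as the orthogonal complement of the other and then obtains containment of isomorphisms and closure under composition from the general closure properties of orthogonality classes -- an identification the paper only records a posteriori through \cref{fs-properties}. Both routes rest on the same two facts, namely that \npnormepi{}s are epimorphisms and that a split monomorphism which is an epimorphism is an isomorphism, so nothing is lost; your packaging is slightly more abstract, the paper's slightly more self-contained.
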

\begin{proof}
    Clearly, $\NEpi$ and $\TKer$ are closed under compositions with isomorphisms. 
    More\-over, we have already verified in \zcref{ne-ort-tk} that every \npnormepi{} is orthogonal to every map with \nptrivial{} \npkernel{}. 
    Now, given any morphism $f$ in $\cat C$, consider the following diagram,
    \begin{equation*}
    \begin{tikzcd}
    \cdot\arrow[r, "k", normmono]\arrow[d,"{e'}"']&\cdot\arrow[r, "f"]\arrow[d, "e",normepi]&\cdot
    \\
    \cdot \arrow[r, "h"', normmono]&\cdot\arrow[ur, "m"', bend right]
    \end{tikzcd}
    \end{equation*}
    where $k$ is the \npkernel{} of $f$, $e$ is the \npcokernel{} of $k$ and $m$ is the unique map such that $f=\comp e m$, by the universal property of $e$. 
    The morphism $h$ is the \npkernel{} of $m$ and $e'$ is the unique map such that $\comp ke=\comp{e'}{h}$ by the universal property of $h$. The left square is a pullback, and therefore $e'$ is a \npnormepi{}. 
    Since $\comp{e'}h=\comp ke $ is \nptrivial{}, it follows from the dual of \zcref{triv-cancel}, that $h$ is itself \nptrivial{}, and thus $f$ factors as a \npnormepi{} followed by a map with a \nptrivial{} \npkernel{}, as desired.
\end{proof}
Before proceeding further, we pause to make a few general remarks on some features of the definition of non-pointed prenormal category and alternative approaches to similar ideas.
{
\begin{remark}
\label{ideals-2}
 In this remark we discuss the choice of working with a distinguished class of trivial objects rather than an ideal of morphisms (see again \cite{EHRESMAN,GRANDIS92,MARKI13}). Of course, 
\zcref{normal-def} could easily be reformulated using ideals of morphisms rather than trivial objects. However, some crucial results would not carry over. In particular, 
\zcref{norm-fs} 
no longer holds in the ideal-based approach: what fails is the dual of the cancellation property in \zcref{triv-cancel}. The following example illustrates this failure. (Notice that, while \zcref{norm-fs} is stated for finitely complete categories, the proof only requires the existence of pullbacks, which exist in the following example.)

Consider the category $\cat C$ having just one object $\ast$, whose endomorphisms are all positive integers, and with composition given by multiplication. This category has pullbacks. In fact, given any positive integers $m$ and $n$, let $d$ be their greatest common divisor, and write $m=ad$ and $n=bd$. Then it is easy to check that the following square is the pullback of $m$ along $n$.
\[
\begin{tikzcd}
    \ast \arrow[r, "a"] \arrow[d,"b"']& \ast\arrow[d,"n"] 
    \\
    \ast\arrow[r,"m"'] &\ast 
\end{tikzcd}
\]
\renewcommand{\tobj}{\cat N}Consider now the ideal $\tobj$ of arrows in $\cat C$ consisting of positive integers divisible by $10$. This ideal is obviously not \emph{closed}, meaning that there is no class of objects $\cat Z$ such that $\tobj$ is the class of $\cat Z$-trivial maps. Given any map $n$, its \npkernel{} and \npcokernel{} are as follows:
\[
\tobj\textnormal{-ker}(n)=\tobj\textnormal{-coker}(n)=2^{D(n,5)}\cdot5^{D(n,2)},
\]
with $D(n,k)=0$ if $k$ divides $n$ and $D(n,k)=1$ otherwise.
We notice then that the set of \npnormepi{}s is just $\{1,2,5,10\}$, and it coincides with the set of \npnormmono{}s. Since \npnormmono{}s are always pullback-stable (as one quickly checks), we readily conclude that \npnormepi{}s are pullback-stable in $\cat C$.

We show now that in this category, despite it having pullbacks, \npkernel{}s, \npcokernel{}s and stable \npnormepi{}s, we do not have a \facsys{}-factorisation. First of all, notice that if $f$ is a map which factors as $f=\comp q m$, with $q\in\NEpi$ and $m\in \TKer$ and $k$ is the \npkernel{} of $f$, then $q$ is the \npcokernel{} of $k$ (this is an easily proved general fact). Consider now the map $25$ and suppose it factors as $25=\comp q m$ with $q\in\NEpi$ and $m\in \TKer$. The \npkernel{} of 25 is $k=2$. Since $q$ must be the \npcokernel{} of $k$, we have $q=5$, and so $m$ must be 5 as well. However, the \npkernel{} of 5 is 2, which is not in $\tobj$, and so $m=5\notin\TKer$.
\end{remark}
}
\begin{remark}
    Different approaches exist in the literature that introduce non-pointed notions of normal epimorphisms (or normal monomorphisms) that can participate in (possibly stable) factorisation systems. In particular, in the very recent  work \cite{THOLEN25}, a `normal monomorphism' is defined -- in a category with enough (co)limits -- as a map $f\colon X \to Y$ whose pushout along the terminal map $X \to 1$ is also a pullback, thereby bypassing the necessity for a (reflective) subcategory $\tobj$ of trivial objects 
    altogether. Note however that if $\tobj$ is taken to be the subcategory of terminal objects, then \npnormmono{}s (in the sense of \ref{norm-mono-notation}) are normal monomorphisms in the sense of \cite{THOLEN25}, though the converse is not true in general (for instance, in $\catSet$, any function $1\to1+1$ provides a counterexample). Moreover, such $\tobj$ is not generally coreflective. These features highlight notable differences between \cite{THOLEN25} and the present paper. Nevertheless, the normal monomorphisms from \cite{THOLEN25}, and their dual notion of normal epimorphisms, can still be part of (not necessarily stable) factorisation systems, as established in \cite[Theorem~4.2]{THOLEN25} and examples in the same work.
\end{remark}
We conclude the section by highlighting a few properties of trivial objects in non-pointed \pnormal{} categories, most of which are trivial or invisible in the pointed case.
\newcommand{\srefi}{\cat C^{\to\tobj}}\newcommand{\srefin}{\cat{E}^{\to\cat A}}

In the definition of a \normal{} category $\cat C$, we require the subcategory $\tobj$ of trivial objects to be mono-coreflective, but not necessarily reflective, as the latter is not needed in general (see \zcref{trivial-ex}). However, $\tobj$ does satisfy a weaker but closely related condition. In the remainder of the section, we make this condition precise and explore its implications.
\begin{definition}
\label{subref-def}  
Given a category $\cat E$ and a subcategory $\cat A$, we say that $\cat A$ is \emph{subreflective} in $\cat E$ if the following holds: letting $\srefin$ denote the full subcategory of $\cat E$ consisting of those objects that admit a morphism into some object of $\cat A$, then every object in $\srefin$ admits a reflection into $\cat A$; in other words, $\cat A$ is a reflective subcategory of $\srefin$.
\end{definition}
\begin{proposition}
\label{norm-subref-triv}
    If $\cat C$ is a \normal{} category, then $\tobj$ is subreflective in $\cat C$.
\end{proposition}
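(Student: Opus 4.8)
The plan is to show that every object $B$ of $\srefi$ --- that is, every $B$ admitting at least one morphism $f\colon B\to T$ with $T\in\tobj$ --- possesses a reflection into $\tobj$, and to produce this reflection directly from the factorisation system of \cref{norm-fs}. Concretely, I would factor the witnessing map as $f=\comp e m$ with $e\colon B\to M$ in $\NEpi$ and $m\colon M\to T$ in $\TKer$, and then argue that the normal epimorphism $e$ is itself the reflection unit $\eta_B\colon B\to M$.

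The first step is to observe that the middle object $M$ already lies in $\tobj$. Since $T\in\tobj$, its coreflection is $(T,\id_{T})$, so by \cref{monocoref} the \npkernel{} of $m$ is computed as the pullback of $\id_{T}$ along $m$, which is $(M,\id_{M})$. As $m\in\TKer$, this \npkernel{} is \nptrivial{}, and hence $M\in\tobj$ by \cref{eq-tker}. (Equivalently, $m$ is itself \nptrivial{} because its codomain $T$ lies in $\tobj$, so applying \cref{charact-tker} to $\id_{M}$ gives $M\in\tobj$ at once.)

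The second, and conceptually central, step is to verify the universal property of the reflection. By the construction in \cref{norm-fs}, the map $e$ is the \npcokernel{} of the \npkernel{} $k\colon K\to B$ of $f$. Now let $g\colon B\to T'$ be any morphism with $T'\in\tobj$. The composite $\comp k g$ factors through $T'\in\tobj$ and is therefore \nptrivial{}; consequently the universal property of the \npcokernel{} $e$ yields a unique $\bar g\colon M\to T'$ with $g=\comp e{\bar g}$. This is exactly the factorisation required for $(M,e)$ to be a reflection of $B$ into $\tobj$, and so $\tobj$ is a reflective subcategory of $\srefi$, i.e.\ subreflective in $\cat C$ in the sense of \cref{subref-def}.

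I expect the only genuine subtlety to be recognising that no separate triviality hypothesis on $g$ is ever needed: the entire argument hinges on the elementary remark that \emph{any} morphism whose codomain lies in $\tobj$ is automatically \nptrivial{}, which is precisely what collapses the cokernel's universal property into the reflection's. Once this is noticed, the proof is essentially immediate, and the independence of the reflection from the chosen witness $f$ follows for free, since universal arrows are unique up to canonical isomorphism.
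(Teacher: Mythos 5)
Your proof is correct and follows essentially the same route as the paper's: since the witnessing map $f\colon B\to T$ is itself \nptrivial{}, its \npkernel{} is $(B,\id B)$, so your $e$ is precisely the \npcokernel{} of $\id B$, which is exactly what the paper exhibits (more tersely) as the reflection. The only difference is that you verify the universal property by hand via the factorisation system of \cref{norm-fs}, rather than observing directly that $\id B$ is a \npnormmono{} and invoking the dual of \cref{monocoref}.
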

\begin{proof}
It suffices to observe that for every object $X$ in $\srefi$, $\id X$ is a \npnormmono{}, and therefore admits a \npcokernel{}.
\end{proof}
\begin{remark}
\label{closure-property}
In finitely complete categories,  the notions of reflectivity and subreflectivity are related as follows: a subreflective subcategory of a finitely complete category is reflective if and only if it is closed under (finite) limits, if and only if it contains the terminal object. 

Moreover, if a subcategory $\tobj$ of a category $\cat C$ is both subreflective and mono-coreflective, then it satisfies several interesting structural properties:
\begin{enumerate}
        \item\label{cp1} $\tobj$ is closed under colimits and non-empty limits existing in $\cat C$.
        \item \label{cp2} If $\cat C$ admits colimits of a given shape, then maps from a colimit of that shape are \nptrivial{} if and only if their components are. 
        \item \label{cp3}If $\cat C$ admits limits of a given non-empty shape, then maps to limits of that shape are \nptrivial{} if and only if their components are.
        \item \label{cp4} If an object $X$ admits a reflection $(\refl X,\uni X)$, then $\uni X$ is an epimorphism (and therefore $(\refl X,\uni X)$ is the \npcokernel{} of $\id X$).
        \item \label{cp5} Every strong monomorphism in $\cat C$ has a \nptrivial{} \npkernel{}.
    \end{enumerate}
    \zcref[capfirst=true]{cp1,cp2,cp3,cp4} can be established essentially as in the reflective case, with minor adaptations, while \zcref[capfirst=true]{cp5} follows from \zcref{cp4}.
\end{remark}

\section{\Pnormality{} and factorisation systems}
\label{sec-normal-fs}
This section is devoted to establishing and proving the non-pointed analogue of \zcref{pnorm-fs}, providing a characterisation of \pnormal{} categories in terms of certain factorisation systems.
{
\newcommand{\lcl}{\mathscr{E}}
\newcommand{\rcl}{\mathscr{M}}
\begin{proposition}
\label{characterisation-fs}
    Let $\cat C$ be a category with finite limits, and let $\cat Z$ be a mono-coreflective subcategory of $\cat C$. Then the following are equivalent.
    \begin{enumerate}[(i)]
        \item \label{pnfs1} $\cat C$ is \normal{}.
        \item\label{pnfs2} $\cat C$ admits \facsys{} as a stable factorisation system.
    \end{enumerate}
    If $\cat Z$ is \sreflective{} (see \zcref{subref-def}), then the following are also equivalent to \zcref{pnfs1,pnfs2}.
    \begin{enumerate}[(i)]
    \setcounter{enumi}{2}
        \item\label{pnfs3} $\cat C$ admits a stable factorisation system of the form $\bigl(\NEpi,\rcl\bigr)$ for some class $\rcl$ of arrows in $\cat C$.
        \item\label{pnfs4} $\cat C$ admits a stable factorisation system of the form $\bigl(\lcl,\TKer\bigr)$ for some class $\lcl$ of arrows in $\cat C$.
    \end{enumerate}
\end{proposition}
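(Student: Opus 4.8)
The plan is to establish $(\ref{pnfs1})\Leftrightarrow(\ref{pnfs2})$ first, then dispatch the trivial implications $(\ref{pnfs2})\Rightarrow(\ref{pnfs3})$ and $(\ref{pnfs2})\Rightarrow(\ref{pnfs4})$, leaving the two substantive reverse implications for the subreflective case. The implication $(\ref{pnfs1})\Rightarrow(\ref{pnfs2})$ is precisely \cref{norm-fs}. For $(\ref{pnfs2})\Rightarrow(\ref{pnfs1})$ I would read off the three axioms: finite limits and mono-coreflectivity are hypotheses, stability of the system is exactly pullback-stability of $\NEpi$, and for cokernels of kernels I would factor a map $f$ with kernel $k$ as $f=\comp e m$ with $e\in\NEpi$ and $m\in\TKer$, the left factor $e$ being the cokernel of $k$ by the general fact recalled in \cref{ideals-2}. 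Finally $(\ref{pnfs2})\Rightarrow(\ref{pnfs3})$ and $(\ref{pnfs2})\Rightarrow(\ref{pnfs4})$ hold by taking $\mathscr M=\TKer$ and $\mathscr E=\NEpi$.

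The content of the subreflective case is that the unknown class is forced: one must show $\mathscr M=\TKer$ in $(\ref{pnfs3})$ and $\mathscr E=\NEpi$ in $(\ref{pnfs4})$. In each case one inclusion follows from \cref{ne-ort-tk} and \cref{fs-property}, which identify $\mathscr M$ and $\mathscr E$ as the classes orthogonal to $\NEpi$ and to $\TKer$; thus $\TKer\subseteq\mathscr M$ and $\NEpi\subseteq\mathscr E$. For $\mathscr M\subseteq\TKer$ I would take $m\in\mathscr M$ with kernel $k\colon K\to A$; as $\comp k m$ is trivial, $K$ lies in $\srefi$ and so admits a reflection $\uni K\colon K\to\refl K$ into $\tobj$, which is a normal epimorphism by \cref{cp4}. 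Building the lifting square with $\uni K$ on the left and $m$ on the right (the lower edge induced from the trivial map $\comp k m$ through the reflection), the diagonal factors through $\ker m=k$, and a short chase forces $\uni K$ to be a split monomorphism; being also epic, it is an isomorphism, so $K\in\tobj$ and $m\in\TKer$.

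For $(\ref{pnfs4})\Rightarrow(\ref{pnfs2})$ I would show each $e\in\mathscr E$, with kernel $k$, is the cokernel of $k$. That $e$ is epic (hence uniqueness in the universal property) follows by taking the equaliser of any pair coequalised by $e$: it is a regular, hence strong, monomorphism, so it lies in $\TKer$ by \cref{cp5}, and orthogonality makes it a split epimorphism, thus an isomorphism. For existence, given $g$ with $\comp k g$ trivial, I would factor $\langle e,g\rangle=\comp\alpha\beta$ with $\alpha\in\mathscr E$ and $\beta=\langle\beta_1,\beta_2\rangle\in\TKer$; \cref{fs-property-cancel} gives $\beta_1\in\mathscr E$, and the sought factorisation is $h=\comp{\beta_1^{-1}}{\beta_2}$ provided $\beta_1$ is an isomorphism, i.e.\ provided $\beta_1\in\TKer$.

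I expect this last point to be the main obstacle, and I would isolate it as a cancellation lemma dual to \cref{triv-cancel}: if $u\in\mathscr E$ and $\comp u\phi$ is trivial, then $\phi$ is trivial. The proof is by orthogonality against the coreflection $\cuni M\colon\corefl M\to M$ of the codomain of $\phi$, which is a trivial normal monomorphism and so lies in $\TKer$ by \cref{eq-tker}: triviality of $\comp u\phi$ lets it factor through $\corefl M$, and the resulting lift shows $\phi$ itself factors through $\corefl M$. To conclude, I would observe via \cref{stable-nm} that pulling $\ker\beta_1$ back along $\alpha$ returns $\ker e=k$, yielding a projection $\tilde k\in\mathscr E$ (by stability) with $\comp{\tilde k}{(\comp{\ker\beta_1}{\beta_2})}=\comp k g$ trivial; the lemma then makes $\comp{\ker\beta_1}{\beta_2}$ trivial, and since $\comp{\ker\beta_1}{\beta_1}$ is trivial too, \cref{charact-tker} forces $\ker\beta_1$ to be trivial. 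The remaining verifications are routine applications of the factorisation-system axioms in \cref{fs-properties}.
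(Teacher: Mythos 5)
Your proposal is correct and follows essentially the same route as the paper's proof: the same reduction of (ii)$\Rightarrow$(i) to the normal-epi/trivial-kernel factorisation of a map with prescribed kernel, the same use of the reflection $\uni K$ as a \npnormepi{} lifted against $\mathscr M$ for (iii)$\Rightarrow$(ii), and for (iv)$\Rightarrow$(ii) the same three ingredients -- the cancellation lemma obtained by lifting against a coreflection, the product construction $\langle e,g\rangle$ with the two-out-of-three argument forcing $\beta_1\in\mathscr E$, and epimorphy of maps in $\mathscr E$ via strong monomorphisms having \nptrivial{} \npkernel{}s. The one step worth making explicit is that passing from triviality of the two components $\comp l{\beta_1}$ and $\comp l{\beta_2}$ to triviality of $\comp l\beta$ (so that \cref{charact-tker} can be applied to $\beta\in\TKer$) uses \sreflectivity{} via \cref{cp3} of \cref{closure-property}, exactly as in the paper.
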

\begin{proof}
    We have already proved that \ref{pnfs1} $\!\implies\!$ \ref{pnfs2} in \zcref{norm-fs}, and obviously \ref{pnfs2}$\implies$\ref{pnfs3} and \ref{pnfs2}$\implies$\ref{pnfs4}. We now prove the remaining implications. Let us begin by showing that \ref{pnfs2}$\implies$\ref{pnfs1}.
    \begin{itemize}
        \item Suppose \ref{pnfs2} holds. To show that $\cat C$ is \normal{} we just need to prove that $\cat C$ admits \npcokernel{}s of \npkernel{}s. Let $(A,f)$ be the \npkernel{} of a map $g\colon B \to C$. Consider the factorisation $g=\comp em$, with $e\in\NEpi$ and $m\in\TKer$. It follows from \zcref{charact-tker} that $(A,f)$ is also the \npkernel{} of $e$, and therefore  $e$ is the \npcokernel{} of $f$ by the dual of \zcref{coker-of-ker}. \end{itemize}
    We now proceed to prove that \ref{pnfs3}$\implies$ \ref{pnfs2} and that \ref{pnfs4}$\implies$\ref{pnfs2} under the assumption that $\tobj$ is \sreflective{}.
    \begin{itemize}
        \item Suppose \ref{pnfs3} holds. We show that $\rcl=\TKer$. We know from \zcref{ne-ort-tk} that \npnormepi{}s satisfy the left lifting property with respect to maps with a \nptrivial{} \npkernel{}. It then follows from \zcref{fs-property} in \zcref{fs-properties} that $\TKer\subseteq\rcl$. It remains to prove the reverse inclusion. Let $f\colon A\to B$ be a map in $\rcl$, and let $(K,k)$ be its \npkernel{} (whose existence is guaranteed by \zcref{kernel-in-preno}). Since the composite $\comp k f\colon K\to B$ is \nptrivial{}, \sreflectivity{} of $\tobj$ ensures that $K$ admits a reflection $(\refl K,\uni K)$ in $\cat Z$. We thus obtain the following commutative square for some map $\refl K \to B$.
        \begin{equation*}
        \begin{tikzcd}
            K\arrow[r,"k"]\arrow[d,"\uni K"'] &A\arrow[d,"f"]
            \\
            \refl K\arrow[r] & B
        \end{tikzcd}
        \end{equation*}
        By \zcref{cp4} in \zcref{closure-property}, $\uni K$ is a \npnormepi{}, and hence it satisfies the left lifting property with respect to maps in $\rcl$. We then obtain a map $d\colon \refl K\to A$ such that $k=\comp{\uni K}d$, and so $k$ is trivial, as desired.
        \item Suppose now that \ref{pnfs4} holds. We show that $\lcl=\NEpi$. Arguing as in the previous point, we have that $\NEpi\subseteq\lcl$. We need to prove the reverse inclusion. We proceed in steps.
        \begin{enumerate}[a)]
        \item\label{letter-a} First let us show that  if $f\colon A\to B$ lies in $\lcl$, and $g\colon B\to C$ is such that the composite $\comp fg$ is \nptrivial{}, then  $g$ is itself \nptrivial{}. Consider a coreflection $(\corefl C,\cuni C)$ of $C$ in $\cat Z$, so that we have the following factorisation for some map $A\to\corefl C$.
        \begin{equation*}
        \begin{tikzcd}
            A\arrow[r]\arrow[d,"f"'] &\corefl C\arrow[d,"\cuni C"]
            \\
            B\arrow[r,"g"']& C 
        \end{tikzcd}
        \end{equation*}
        Since $\cuni C$ has a \nptrivial{} \npkernel{} (as it is a \npnormmono{} -- see \zcref{sec-triv-obj}) and $f\in\lcl$, there exists a map $d\colon B\to\corefl C$ such that $g=\comp d{\cuni C}$. Hence, $g$ is \nptrivial{}.
        
        \item\label{letter-b} Now let $f\colon A\to B$ be a map in $\lcl$ and let $(K,k)$ be its \npkernel{}. We claim that if $g\colon A\to C$ is an arrow such that $\comp kg$ is \nptrivial{}, then there exists a map $g'$ such that $g=\comp f{g'}$; in other words, $(B,f)$ satisfies the universal property of a \npcokernel{} of $k$, but without uniqueness.
        
        Consider the product $B\times C$, with projections $p_B$ and $p_C$,
and let $h\colon A\to B\times C$ be the map with components $f$ and $g$, as shown in the following diagram.
        \[
        \begin{tikzcd}[row sep =1 em]
        & B
        \\
        A\arrow[ur,"f"]\arrow[dr, "g"']\arrow[rr, "h" description] & &B\times C\arrow[ul, "p_B"']\arrow[dl, "p_C"]  \\
        &C
        \end{tikzcd}
        \]
        We can factor $h$ as $h=\comp e m$, for some $e\colon A\to M$ in $\lcl$ and $m\colon M\to B\times C$ in $\TKer$.  Since $f$ and $e$ are both in $\lcl$, we can apply \zcref{fs-property-cancel} of \zcref{fs-properties} to the equality $f=\mcomp{e,m,p_B}$ and immediately conclude that $\phi\coloneqq\comp m{p_B}\in\lcl$.

        Next, let $(\corefl B,\cuni B)$ be a coreflection of $B$ in $\tobj$. Since $(K,k)$ is the \npkernel{} of $f$, there exists a (unique) map  $w\colon K\to\corefl B$ such that the square of sides $f$, $\cuni B$, $w$ and $k$  is a pullback (see \zcref{monocoref}). We can then construct the diagram below, where $(L,l,y)$ is the pullback of $\cuni B$ and $\comp m{p_B}$, and $x$ is the unique map such that $\comp x y=w$ and $\comp xl =\comp ke$.
        \begin{equation*}
        \begin{tikzcd}[column sep= 4em]
        K\arrow[r,"x"]\arrow[d,"k"]\arrow[rr, bend left, "w"]& L\arrow[r,"y"]\arrow[d,"l"] & \corefl B\arrow[d,"\cuni B"]
        \\
        A\arrow[r, "e"']\arrow[rr,"f"', bend right ] & M\arrow[r,"{\comp m {p_B}}"'] & B
        \end{tikzcd}
        \end{equation*}
        First, since the right-hand square is a pullback, it follows from \zcref{monocoref} that $(L,l)$ is a \npkernel{} of $\comp m{p_B}$. Second, since the outer rectangle is a pullback (as mentioned above), we deduce the left-hand square is also a pullback. Hence, $x\in\lcl$ as it is a pullback of $e\in\lcl$. 

        Consider now the morphism
        \(
            \mcomp{x,l,m}=\mcomp{k,e,m}=\comp kh
        \) into the product $B\times C$. The components of this morphism are $\comp kf$ and $\comp kg$, both \nptrivial{} by hypothesis.  By \sreflectivity{} of $\tobj$  it follows that $\comp hk$, and hence $\mcomp{x,l,m}$, is \nptrivial{} (see \zcref{closure-property}, \zcref{cp3}). 
        We can then apply \zcref{letter-a} above and \zcref{charact-tker}  to cancel $m$ on the left and $x$ on the right, and conclude that $l$ itself is \nptrivial{}. 
        
        As previously observed, $(L,l)$ is the \npkernel{} of $\phi=\comp m{p_B}$, and so we have shown that $\phi\in\TKer$. We also established earlier that $\phi\in\lcl$ as well, and hence $\phi$ is an isomorphism (use \zcref{fs-properties}). We can therefore define the map $g'=\mcomp{\phi^{-1},m,p_C}$, which satisfies $\comp f{g'}=g$.
         \item\label{letter-c} Finally, we show that any map in $\lcl$ is an epimorphism. By \zcref{closure-property} (\zcref{cp5}), every strong monomorphism has a \nptrivial{} \npkernel{}. It follows that every map in $\lcl$ satisfies the left lifting property with respect to strong monomorphisms, which implies -- in the presence of equalisers -- that they are epimorphisms.
        \end{enumerate}
       Combining \zcref{letter-b,letter-c} we obtain that any map in $\lcl$ is the \npcokernel{} of its \npkernel{}, as required.\qedhere
    \end{itemize}
\end{proof}
}
\section{General properties of (non-pointed) \pnormal{} categories}
\label{sec-properties}
In this section, we extend to the general non-pointed case the properties previously established for \pnormal{} categories in \zcref{pointed-case}, this time providing full proofs. We begin with a result generalising some well-known properties of pullbacks holding for regular categories.
\begin{lemma}
\label{pullback-lemmas}
   Let $\cat C$ be a category with pullbacks.
    \begin{enumerate}
        \item\label{pb-po-part} Consider the following pullback diagram in $\cat C$.
        \begin{equation}
    \label{pb-po-square}
    \begin{tikzcd}
    \cdot\arrow[r, "q",epi]\arrow[d, "g"',epi]&\cdot\arrow[d, "f"]
    \\
    \cdot\arrow[r,regepi, "p"']&\cdot
    \end{tikzcd}
    \end{equation}
    If $p$ is a regular epimorphism, $g$ is a pullback-stable epimorphism and $q$ is an epimorphism, then \ref{pb-po-square} is also a pushout square.
    \item\label{pf-pb-part} Consider then the following commutative diagram in $\cat C$.  
    \begin{equation}
    \label{pf-pb-diag}
\begin{tikzcd}
    A\arrow[r,"f"]\arrow[d, "a"] & B\arrow[r, "g"]\arrow[d,"b"] & C\arrow[d, "c"]
    \\
    A'\arrow[r, "{f'}"', regepi] & B'\arrow[r, "{g'}"'] &C'
\end{tikzcd}
\end{equation}
    If $f'$ is a pullback-stable regular epimorphism and both the outer rectangle and the left-hand square are pullbacks, then the right-hand square is also a pullback.
\item \label{B-K} (Generalised Barr-Kock theorem) Consider the following commutative diagram where $f$ is a pullback-stable regular epimorphism, $(r_0,r_1)$ is the kernel pair of $f$ and $(s_0,s_1)$ is the kernel pair of $g$.
\begin{equation}
\label{B-K-diag}
   \begin{tikzcd}
        \cdot \arrow[r, yshift=.3em, "r_0"]\arrow[r,yshift=-.3em, "r_1"']\arrow[d]& \cdot\arrow[r,regepi, "f"] \arrow[d] & \cdot\arrow[d]
        \\
        \cdot \arrow[r, yshift=.3em, "s_0"]\arrow[r,yshift=-.3em, "s_1"']& \cdot\arrow["g"',r] & \cdot
    \end{tikzcd}
\end{equation}
If either of the left-hand squares is a pullback, then the right-hand square is a pullback.
    \end{enumerate}
\end{lemma}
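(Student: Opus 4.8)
The plan is to prove the three parts in order, deriving the later ones from the earlier ones, and throughout I write compositions in diagrammatic order. For \cref{pb-po-part}, write $P$ for the top-left vertex of \ref{pb-po-square} and let $(k_0,k_1)$ be the kernel pair of $p$, so that $p$ is the coequaliser of $k_0,k_1$. Given a cocone consisting of $u$ on $B$ and $v$ on $A$ with $\comp qu=\comp gv$, I must produce a unique factorisation through the bottom-right vertex. The idea is to form the pullback $\tilde K$ of the stable epimorphism $g$ along $k_0$: by the pasting lemma $\tilde K$ is simultaneously the pullback of $f$ along $\comp{k_0}p$, and since $\comp{k_0}p=\comp{k_1}p$ it is canonically identified with the pullback of $g$ along $k_1$ as well. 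This produces two maps $\rho_0,\rho_1\colon\tilde K\to P$ with $\comp{\rho_0}q=\comp{\rho_1}q$, together with a projection $\gamma_0\colon\tilde K\to K$ which, being a pullback of $g$, is an epimorphism. A short computation using $\comp gv=\comp qu$ then yields $\comp{\gamma_0}{(\comp{k_0}v)}=\comp{\gamma_0}{(\comp{k_1}v)}$; cancelling the epimorphism $\gamma_0$ shows $v$ coequalises $(k_0,k_1)$ and hence factors uniquely as $\comp pw=v$. Finally $\comp fw=u$ because $q$ is an epimorphism, and uniqueness of $w$ because $p$ is. The stability hypothesis on $g$ is spent precisely in making $\gamma_0$ epic; this cancellation is the only subtle point.

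For \cref{pf-pb-part} I would reduce to \cref{pb-po-part}. Form $\bar B$, the pullback of $c$ along $g'$, giving the canonical comparison $\theta\colon B\to\bar B$; proving the right-hand square a pullback is exactly proving $\theta$ an isomorphism. Building $\bar A$ as an iterated pullback (first $c$ along $g'$, then $\bar b$ along $f'$) exhibits $\bar A$ as a pullback of $c$ along $\comp{f'}{g'}$, so the given outer pullback supplies a canonical isomorphism $\alpha\colon A\to\bar A$. By the pasting lemma the square with sides $\alpha,f,\bar f,\theta$, where $\bar f$ is the pullback of $f'$ along $\bar b$, is itself a pullback; moreover $\bar f$ is a pullback-stable regular epimorphism and $f$ is a regular epimorphism, both as pullbacks of $f'$. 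Applying \cref{pb-po-part} to this square shows it is also a pushout. Since in that pushout $\theta$ is the cobase change of $\alpha$ along $f$, and isomorphisms are stable under pushout, $\theta$ is an isomorphism, as required.

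For \cref{B-K} the goal is once more to show that the comparison $\theta\colon X\to P$ into the pullback $P\coloneqq Y\times_W Z$ of $g$ along $\eta$ is an isomorphism, where $\comp\theta{\pi_Y}=f$ and $\comp\theta{\pi_Z}=\xi$. That $\theta$ is a monomorphism follows from the hypothesis on the left-hand square: its kernel pair is cut out by imposing $\comp{r_0}\xi=\comp{r_1}\xi$ over $R$, and the identification $R\cong S\times_Z X$ forces such elements into the diagonal. To invert $\theta$ I would pull $f$ back along $\pi_Y$ to obtain a pullback-stable regular epimorphism $\kappa\colon Q\to P$ and the canonical section $\sigma\colon X\to Q$ of the projection $Q\to X$ determined by $\theta$ (so $\comp\sigma\kappa=\theta$). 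Using the isomorphism $R\cong S\times_Z X$ I would construct a ``correction'' map $\mu\colon Q\to X$ satisfying the two identities $\comp\mu\theta=\kappa$ and $\comp\sigma\mu=\id_X$. Granting these, the argument closes cleanly: since $\comp\mu\theta=\kappa$ and $\theta$ is monic, $\mu$ coequalises the kernel pair of $\kappa$, so $\kappa$ being the coequaliser of that kernel pair yields $r\colon P\to X$ with $\comp\kappa r=\mu$; then $\comp r\theta=\id_P$ follows by cancelling the epimorphism $\kappa$ from $\comp\kappa{(\comp r\theta)}=\comp\mu\theta=\kappa$, while $\comp\theta r=\comp\sigma{(\comp\kappa r)}=\comp\sigma\mu=\id_X$, so $\theta$ is an isomorphism.

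I expect the main obstacle to lie in \cref{B-K}, specifically in the construction of the correction map $\mu$ and the verification of the identities $\comp\mu\theta=\kappa$ and $\comp\sigma\mu=\id_X$ in a genuinely element-free way: this is the single place where the pullback hypothesis on the left-hand square (through $R\cong S\times_Z X$) is converted into surjectivity data for $\theta$, and where the pullback-stability of $f$ (through $\kappa$) is indispensable, exactly as in the classical Barr–Kock argument. Everything downstream of $\mu$ is a formal diagram chase driven by the fact that $\kappa$ is the coequaliser of its kernel pair and $\theta$ is monic.
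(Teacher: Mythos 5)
Your argument is correct, and for \cref{pb-po-part,pf-pb-part} it matches the paper's: the paper leaves \cref{pb-po-part} to the reader (``as in the regular case''), and your kernel-pair argument --- forming the pullback of $g$ along one leg of the kernel pair of $p$, identifying it with the pullback along the other leg via $\comp{k_0}p=\comp{k_1}p$, and cancelling the epimorphism $\gamma_0$ --- is exactly the standard proof being alluded to; your treatment of \cref{pf-pb-part} (iterated pullbacks, comparison maps $\alpha$ and $\theta$, then \cref{pb-po-part} plus pushout-stability of isomorphisms) coincides with the paper's written proof. Where you genuinely diverge is \cref{B-K}: the paper deduces it from \cref{pf-pb-part} by a short pasting argument (the kernel-pair square of $f$ is a pullback, and the hypothesis on the left-hand square of \ref{B-K-diag} makes the composite rectangle over $g$ a pullback, so \cref{pf-pb-part} applies directly), whereas you give a self-contained construction of the inverse of the comparison map $\theta\colon X\to P$ via the section $\sigma$, the pulled-back regular epimorphism $\kappa$, and the correction map $\mu$. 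Your route is longer but makes the mechanism explicit --- it shows exactly where the pullback hypothesis on the left square is converted into monicity of $\theta$ and into the existence of $\mu$, and all the steps you flag as delicate (the identities $\comp\mu\theta=\kappa$ and $\comp\sigma\mu=\id_X$, and the monicity of $\theta$) do go through element-freely using only the universal properties of $R\cong S\times_Z X$ and of the kernel pairs; the paper's route is shorter and reuses \cref{pf-pb-part}, which is arguably the point of stating that part first. Both are valid; you might note in your write-up that the two-line pasting derivation from \cref{pf-pb-part} is available as an alternative.
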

\begin{proof}
\zcref[capfirst=true]{pb-po-part} may be proved essentially as in the regular case. \zcref[capfirst=true]{B-K} may also be proved as in the regular case, using \zcref{pf-pb-part} (see, for example, \cite[Theorem~1.16]{GRAN21}).
For \zcref{pf-pb-part}, we adapt the proof given in  \cite{GRAN21} for the analogous result. Consider the following commutative diagram.
\begin{equation*}
\begin{tikzcd}[column sep =2.5 em, row sep = 2 em]
    A &[-1em]&[.5em]B &[-1em]&[.5em] C
    \\
    & P&&Q&& C
    \\
    A' && B' && C'
    \arrow[from =1-1, to=1-3, "f",regepi]
    \arrow[from=1-3, to =1-5, "g"]
    \arrow[from=2-2, to=2-4, "p" near start,regepi]
    \arrow[from=2-4,to=2-6, "q" near start]
    \arrow[from=3-1,to=3-3, "{f'}"', regepi]
    \arrow[from=3-3,to=3-5, "{g'}"']
\arrow[from=1-1,to=3-1, "a"' near end]
    \arrow[from=1-3,to=3-3, crossing over, "b"' near end]
    \arrow[from=1-5, to=3-5, crossing over, "c"' near end]
\arrow[from=1-1,to=2-2, "\alpha"]
    \arrow[from=2-2,to=3-1, "{p'}"{xshift=.2em, yshift=.5em}]
    \arrow[from=1-3,to=2-4, "\beta"]
    \arrow[from=2-4,to=3-3,"{q'}"{xshift=.2em, yshift=.5em}]
    \arrow[from=1-5,to=2-6, equal]
    \arrow[from=2-6,to=3-5, "{c}"{xshift=.4em, yshift=.3em}]
\end{tikzcd}
\end{equation*}
Here $(Q,q,q')$ is the pullback of $g'$ and $c$, $(P,p,p')$ is the pullback of $f'$ and $q'$, while $\alpha$ and $\beta$ are the induced maps. Note that $p$ and $f$ are regular epimorphisms, as they arise as pullbacks of $f'$, while $\alpha$ is an isomorphism, since the outer rectangle is a pullback. Therefore, the following square
\begin{equation}
\label{some-square-pb-transf}
\begin{tikzcd}
    A\arrow[r, "f", regepi]\arrow[d, "\alpha"'{xshift=-.4em}, "\iso"{description}]& B\arrow[d, "\beta"]
    \\
    P\arrow[r, "p"', regepi] & Q
\end{tikzcd}
\end{equation} 
is a pullback square where $p$ and $f$ are regular epimorphisms and $\alpha$ is an isomorphism. We can then apply \zcref{pb-po-part} and deduce that \ref{some-square-pb-transf} is also a pushout, which proves that  $\beta$ is an isomorphism.
\end{proof}
We can now establish some pullback properties for non-pointed \pnormal{} category, most following from \zcref{pullback-lemmas}. 
\begin{proposition}
\label{pb-po-prop}
    In a \normal{} category $\cat C$, consider the following pullback diagram.
    \begin{equation}
    \zcsetup{reftype=diagram} \label{becomes-a-pushout}
    \begin{tikzcd}
        X'\arrow[r, "f'"]\arrow[d,"x"']&Y'\arrow[d, "y"]
        \\
        X\arrow[r, "f"'] &Y
    \end{tikzcd}
    \end{equation}
    \zcref[capfirst=true]{becomes-a-pushout} is also a pushout whenever one of the following conditions holds.
    \begin{enumerate*}[(a)]
        \item\label{pb-po-corefl} $f$ is a \npnormepi{} and the $\tobj$-coreflection of $y$ is an isomorphism.
        \item\label{pb-po-nm}$f$ is a \npnormepi{} and $y$ is a \npnormmono{}.
        \item\label{pb-po-ne} $f$ is a regular epimorphism, $x$ is a \npnormepi{} and $f'$ is an epimorphism.
    \end{enumerate*}
\end{proposition}
{
\renewcommand{\corefl}[1]{S{#1}}
\begin{proof}
\zcref[capfirst=true]{pb-po-ne} follows from \zcref{pullback-lemmas}, \zcref{pb-po-part}. \zcref[capfirst=true]{pb-po-nm} is a special case of \zcref{pb-po-corefl}, as the $\tobj$-coreflection of a \npnormmono{} is always an isomorphism. To prove \zcref{pb-po-corefl}, let $\cunn\colon \corefl{}\tto\id{\cat C}$ be a coreflection of $\tobj$ in $\cat C$, and consider the following diagram, where the left square is the pullback of $f'$ along $\cuni {Y'}$.
\[
\begin{tikzcd}
    K\arrow[r, "k",normmono] \arrow[d, "f''", normepi]&X'\arrow[r, "x"]\arrow[d,"f'",normepi]&X\arrow[d, "f", normepi]
    \\
    \corefl{Y'}\arrow[r,"\cuni{Y'}"',normmono]& Y'\arrow[r,"y"']&Y
\end{tikzcd}
\]
By \zcref{ex-seq-pb-po}, the left-hand square is a pushout. Since the $\tobj$-coreflection of $y$ is an isomorphism, it follows that $\mcomp{\cuni{Y'},y}=\mcomp{\corefl y,\cuni Y}\iso\cuni Y$ is a $\tobj$-coreflection of $Y$. Therefore, again by \zcref{ex-seq-pb-po}, the outer rectangle in the above diagram is a pushout. We thus conclude that \zcref{becomes-a-pushout} is a pushout.
\end{proof}}
\begin{proposition}
\label{normal-pullback-lemmas}
    In a \normal{} category where \npnormepi{}s are also regular epimorphisms, the following properties hold.
    \begin{enumerate}
        \item  In any diagram as in \ref{pf-pb-diag}, if $f'$ is a \npnormepi{} and both the outer rectangle and the left-hand square are pullbacks, it follows that the right-hand square is also a pullback.
        \item In any diagram as in \ref{B-K-diag}, if $f$ is a \npnormepi{} and either of left-hand squares is a pullback, it follows that the right-hand square is also a pullback.
    \end{enumerate}
\end{proposition}
We now resume our list of properties.
\begin{proposition}
\label{lemma-pb-ex-seq}
    Let $\cat C$ be a \normal{} category, and consider the following commutative diagram where $f$ is the \npkernel{} of $g$.
    \begin{equation*}
    \begin{tikzcd}
        A'\arrow[r,"{f'}"]\arrow[d,"a"']&B'\arrow[r,"{g'}"]\arrow[d,"b"']&C'\arrow[d,"c"']
        \\
        A\arrow[r, "f"',
        normmono]& B\arrow[r,"g"']& C
    \end{tikzcd}
    \end{equation*}
    \begin{enumerate}
    \item\label{pb-e-1} If the left square is a pullback and $g'$ is the \npcokernel{} of $f'$, then $c\in\pTKer$. 
    \item \label{pb-e-2}Conversely, if $c\in\pTKer$ and $f'$ is the \npkernel{} of $g'$, then the left square is a pullback.
    \end{enumerate}
\end{proposition}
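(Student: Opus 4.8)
The plan is to handle both directions through a single common mechanism, supported by two small observations. The first is a \emph{preliminary identification}: since $f$ is the $\tobj$-kernel of $g$, \cref{stable-nm} shows that the projection to $B'$ of \emph{any} pullback of $f$ along $b$ is the $\tobj$-kernel of $\comp bg$; and $\comp bg=\comp{g'}c$ by commutativity of the right-hand square, so this projection is the $\tobj$-kernel of $\comp{g'}c$. The second is a \emph{kernel-absorption} property: if $s\in\TKer$ and $t$ is composable with $s$, then $t$ and $\comp ts$ have the same $\tobj$-kernel. This is immediate from \cref{charact-tker}, since $\comp x{(\comp ts)}$ is trivial exactly when $\comp{(\comp xt)}s$ is trivial, which --- using $s\in\TKer$ --- holds exactly when $\comp xt$ is trivial.

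For \cref{pb-e-2}, assume $c\in\TKer$ and that $f'$ is the $\tobj$-kernel of $g'$. I would form the pullback $P=A\times_B B'$ with projections $\pi_A$ and $\pi_{B'}$. By the preliminary identification $\pi_{B'}$ is the $\tobj$-kernel of $\comp{g'}c$, and kernel-absorption with $s=c$ shows this coincides with the $\tobj$-kernel of $g'$. Thus $\pi_{B'}$ and $f'$ are both $\tobj$-kernels of $g'$, so there is a unique isomorphism $u\colon A'\to P$ with $\comp u{\pi_{B'}}=f'$. It remains to verify $\comp u{\pi_A}=a$: because the left square commutes, $a$ and $f'$ induce a comparison map $A'\to P$ which agrees with $u$ after composition with the monomorphism $\pi_{B'}$, and hence equals $u$; therefore $u$ is an isomorphism of pullback cones and the left square is a pullback.

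For \cref{pb-e-1}, assume the left square is a pullback and that $g'$ is the $\tobj$-cokernel of $f'$; the preliminary identification, applied to the left square, gives that $f'$ is the $\tobj$-kernel of $\comp{g'}c$. Using the factorisation system of \cref{norm-fs}, I would write $c=\comp em$ with $e\in\NEpi$ and $m\in\TKer$. Kernel-absorption with $s=m$ shows that $\comp{g'}c$ and $\comp{g'}e$ have the same $\tobj$-kernel, so $f'$ is the $\tobj$-kernel of $\comp{g'}e$. As a composite of maps in $\NEpi$, the map $\comp{g'}e$ lies in $\NEpi$, and being a cokernel with kernel $f'$ it must be the $\tobj$-cokernel of $f'$ (dual of \cref{coker-of-ker}). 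Since $g'$ is also the $\tobj$-cokernel of $f'$, uniqueness of cokernels yields a unique isomorphism $\theta$ with $\comp{g'}\theta=\comp{g'}e$; cancelling the epimorphism $g'$ forces $\theta=e$, so $e$ is an isomorphism and $c=\comp em\in\TKer$, as $\TKer$ contains isomorphisms and is closed under composition.

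I expect the identifications above to go through smoothly; the two genuinely delicate points are the following. In \cref{pb-e-2}, one must check that the comparison isomorphism respects the second projection $a$ and not merely $f'$ --- this is exactly where monicity of the $\tobj$-kernel $\pi_{B'}$ is used. In \cref{pb-e-1}, the crux is upgrading the statement ``$\comp{g'}e$ lies in $\NEpi$ and has the same $\tobj$-kernel as $g'$'' to ``$e$ is an isomorphism'', which is precisely what uniqueness of cokernels together with the epi-cancellation of $g'$ delivers.
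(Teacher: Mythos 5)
Your proof is correct and follows essentially the same route as the paper: both parts hinge on \cref{stable-nm} to identify the relevant pullback projection as the $\tobj$-kernel of $\comp{g'}c$, together with \cref{charact-tker} and the \facsys{}-factorisation from \cref{norm-fs}. The only difference is cosmetic: for \cref{pb-e-1} the paper directly recognises $(g',c)$ as the canonical factorisation of $\comp b g$, whereas you factor $c$ itself and use uniqueness of cokernels to show its $\NEpi$ part is invertible --- the same mechanism, slightly repackaged.
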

    \begin{proof}
    To prove \zcref{pb-e-1}, notice that by \zcref{stable-nm} we have that $f'$ is the \npkernel{} of the map $d=\comp bg=\comp {g'}c$. Since $g'$ is the \npcokernel{} of $f'$, by the construction provided in the proof of \zcref{norm-fs}, we find that $\comp{g'}c$ is the \facsys{}-factorisation of $d$, and so $c\in\TKer$.

    The partial converse in \zcref{pb-e-2} does not rely on \normality. It readily follows from the universal property of \npkernel{}s and the characterisation of maps in $\TKer$ given in \zcref{charact-tker}.
    \end{proof}
    \begin{lemma}
    \label{product-norm-epi}
        In a \normal{} category $\cat C$, if $f\colon A\nepito B$ and $f'\colon A'\nepito B'$ are \npnormepi{}, then $f\times f'\colon A\times A'\to B\times B'$ is also a \npnormepi{}.
    \end{lemma}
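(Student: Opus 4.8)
The plan is to exhibit $f\times f'$ as a composite of two \npnormepi{}s, and then invoke the fact that $\NEpi$ is closed under composition. Pullback-stability of \npnormepi{}s is available directly from the definition of a \normal{} category (\cref{normal-def}), while closure under composition was established in the course of proving \cref{norm-fs}; these are the only two structural inputs the argument needs.

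First I would factor the product map through the intermediate object $B\times A'$, writing
\[
f\times f'=\comp{(f\times\id_{A'})}{(\id_B\times f')},
\]
so that $f\times f'$ is realised as the composite $A\times A'\to B\times A'\to B\times B'$. The key observation is then that each of the two factors is a pullback of one of the original maps along a product projection. Indeed, the square whose top edge is the projection $A\times A'\to A$, whose bottom edge is the projection $B\times A'\to B$, whose left edge is $f\times\id_{A'}$ and whose right edge is $f$, is a pullback; hence $f\times\id_{A'}$ is a pullback of $f$. Symmetrically, $\id_B\times f'$ is a pullback of $f'$ along the projection $B\times B'\to B'$.

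Since $\cat C$ is \normal{}, \npnormepi{}s are pullback-stable, so both $f\times\id_{A'}$ and $\id_B\times f'$ are \npnormepi{}s. Finally, because $\bigl(\NEpi,\TKer\bigr)$ is a factorisation system (\cref{norm-fs}), the class $\NEpi$ is closed under composition, and therefore the composite $f\times f'$ is again a \npnormepi{}.

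I do not expect a genuine obstacle here: the entire content is the routine verification that $f\times\id_{A'}$ and $\id_B\times f'$ are pullbacks of $f$ and $f'$ respectively, combined with two facts already available for \normal{} categories. The only point worth flagging is that one should route the argument through closure of $\NEpi$ under composition (a nontrivial ingredient of \cref{norm-fs}) rather than trying to present $f\times f'$ as a single pullback of a \npnormepi{}, which it is not in general.
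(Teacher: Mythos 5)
Your proposal is correct and follows essentially the same route as the paper: the paper also writes $f\times f'$ as a composite of two maps, each a pullback of $f$ or $f'$ along a product projection (the paper routes through $A\times B'$ where you route through $B\times A'$, an immaterial difference), and then concludes by pullback-stability together with closure of \npnormepi{}s under composition coming from the factorisation system of \cref{norm-fs}. No gaps.
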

\begin{proposition}
\label{slice-is-normal}
    Let $\cat C$ be a \normal{} category. If $C$ is an object in $\cat C$, then the slice category $\slice CC$ is a a $(\slice Z C)$-\pnormal{} category. If $\cat B$ is any category, then the functor category $\Fun BC$ is a $\Fun BZ$-\pnormal{} category.
\end{proposition}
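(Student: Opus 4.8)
The plan is to deduce both statements from the factorisation-system characterisation of \cref{characterisation-fs}: in each case I would establish that the ambient category has finite limits, that the designated subcategory of trivial objects is mono-coreflective, and that the stable factorisation system $\facsys$ of $\cat C$ can be transported to the slice (resp.\ functor) category along the canonical comparison functor, where it coincides with the pair consisting of the \emph{intrinsic} normal epimorphisms and trivial-kernel maps. Write $U\colon\slice CC\to\cat C$ for the forgetful functor $(f\colon X\to C)\mapsto X$; in the functor-category case the analogous role is played pointwise by the evaluations $\mathrm{ev}_b\colon\Fun BC\to\cat C$.

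For the slice, three preliminaries are needed. First, $\slice CC$ has finite limits, created by $U$ together with the terminal object $\id_C$. Second, $\slice ZC$ is mono-coreflective: the coreflection of $(f\colon X\to C)$ is $\cuni X\colon(\comp{\cuni X}{f})\to f$, where $\cuni X\colon\corefl X\monoto X$ is the coreflection of $X$ in $\cat Z$ (cf.\ \cref{monocoref}) and $\corefl X$ is given the structure map $\comp{\cuni X}{f}$. Third — the key \emph{detection lemma} — a slice map $h\colon f\to g$ is $(\slice ZC)$-trivial if and only if $U(h)$ is $\cat Z$-trivial, and the $(\slice ZC)$-kernel of $h$ is computed by $U$ as the $\cat Z$-kernel of $U(h)$, with the evident structure map. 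The nontrivial direction of the first assertion factors $U(h)=\comp e m$ through some $Z\in\cat Z$ and puts the structure map $\comp mg$ on $Z$; the compatibility making $e$ a slice map is exactly the identity $\comp hg=f$. The kernel computation then follows from \cref{monocoref,stable-nm} together with the fact that $U$ creates the relevant pullbacks. In particular the maps of $\slice CC$ with $(\slice ZC)$-trivial kernel are precisely those whose underlying map lies in $\TKer$.

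Next I would transport the factorisation system. Since $\facsys=(\NEpi,\TKer)$ is orthogonal on $\cat C$, the classes $\mathscr E'\coloneqq U^{-1}(\NEpi)$ and $\mathscr M'\coloneqq U^{-1}(\TKer)$ form an orthogonal factorisation system on $\slice CC$: factorisations are obtained by factoring underlying maps and equipping the intermediate object with the induced structure map over $C$, while the diagonal fill-ins produced in $\cat C$ are automatically slice maps (a short chase using only that the triangles commute over $C$). Stability is inherited since pullbacks in $\slice CC$ are computed by $U$ and $\NEpi$ is pullback-stable. By the detection lemma, $\mathscr M'$ is the class of maps with $(\slice ZC)$-trivial kernel, so it remains to identify $\mathscr E'$ with the $(\slice ZC)$-normal epimorphisms. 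If $U(e)\in\NEpi$ then $e$ is the $(\slice ZC)$-cokernel of its own $(\slice ZC)$-kernel $k$: indeed $U(e)$ is the $\cat Z$-cokernel of $U(k)$ (dual of \cref{coker-of-ker}) and the universal factorisations lift to the slice because $e$ is epic. Conversely, any $(\slice ZC)$-normal epimorphism $q$ satisfies the left lifting property with respect to $\mathscr M'$ by \cref{ne-ort-tk}; factoring $q=\comp e m$ with $e\in\mathscr E'$, $m\in\mathscr M'$ and lifting $q$ against $m$ exhibits $m$ as a split epimorphism, hence — being monic by \cref{triv-cancel} — an isomorphism, so $q\in\mathscr E'$. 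Thus $\slice CC$ admits $\facsys$ as a stable factorisation system, and \cref{characterisation-fs} yields that $\slice CC$ is $(\slice ZC)$-\pnormal{}. The functor-category case runs identically, with every construction performed pointwise: finite limits, the mono-coreflection (postcomposition with the coreflector $\cat C\to\cat Z$, whose counit is pointwise monic hence monic), pointwise triviality of natural transformations (a transformation into $G$ is trivial iff it factors through the monic coreflection $\corefl G\to G$, hence iff each component is $\cat Z$-trivial), $\cat Z$-kernels, and the transported orthogonal system all reduce to their pointwise counterparts, after which \cref{characterisation-fs} again applies.

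The main obstacle I anticipate is precisely the identification of $\mathscr E'$ with the intrinsic normal epimorphisms, not the routine verification of limits and mono-coreflectivity. This is where non-pointedness bites: unlike in the pointed case, triviality in $\slice CC$ is a genuine condition coupling the underlying map to the structure maps over $C$, so one must check with some care that the $\cat C$-level universal properties of kernels and cokernels descend to the slice. The orthogonality argument via \cref{ne-ort-tk} is what makes this identification clean, allowing one to bypass any explicit computation of $(\slice ZC)$-cokernels.
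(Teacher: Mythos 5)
Your overall strategy is sound and, at bottom, coincides with the paper's: the proof given in \cref{ex-slices,ex-fun-cat} also rests on the same detection lemmas (finite limits and $(\slice ZC)$-kernels computed as in $\cat C$, mono-coreflectivity of $\slice ZC$, triviality detected by the forgetful functor, and the lifting of the cokernel's universal property to the slice using that normal epimorphisms are epic), except that the paper then verifies \cref{normal-def} directly -- concluding that a map of $\slice CC$ is a $(\slice ZC)$-normal epimorphism if and only if its underlying map is a \npnormepi{} in $\cat C$, whence stability is immediate -- rather than transporting the factorisation system and invoking \cref{characterisation-fs}. Your packaging via the characterisation is a legitimate alternative and arguably makes the logical dependencies more transparent; it buys nothing essentially new, since identifying $\mathscr E'$ with the intrinsic normal epimorphisms requires exactly the cokernel computation the paper performs anyway.

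There is, however, one incorrect step. In the converse direction of the identification of $\mathscr E'$ you write that $m$, ``being monic by \cref{triv-cancel}'', is an isomorphism. \Cref{triv-cancel} asserts that the underlying map of a \emph{kernel} is a monomorphism; it says nothing about maps with a \emph{trivial kernel}, and such maps need not be monic -- this is precisely the point of the paper (e.g.\ $+\colon\naturals\times\naturals\to\naturals$ in $\catCMon$ lies in $\pTKer$ but is not a monomorphism). The step is easily repaired, and in fact the whole detour through split epimorphisms is unnecessary: once $(\mathscr E',\mathscr M')$ is known to be an orthogonal factorisation system, \cref{fs-property} of \cref{fs-properties} says that $\mathscr E'$ is exactly the class of maps with the left lifting property against $\mathscr M'$, so the fact that a $(\slice ZC)$-normal epimorphism lifts against $\mathscr M'$ (by \cref{ne-ort-tk} and your detection lemma) already places it in $\mathscr E'$.
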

\begin{proof}
See \zcref{ex-slices,ex-fun-cat}.
\end{proof}
In the final part of this section, we highlight some relationships between the notions introduced in this paper and the one in \cite{NORMAL}. The main difference between  \pnormal{} categories and normal categories as defined in \cite{NORMAL} is that, in the former, we do not ask for any relation between normal epimorphisms and regular epimorphisms. This distinction becomes particularly relevant if one wants to work in a non-pointed setting, as highlighted by the following propositions.
\begin{proposition}
\label{subterminal}
    Let $\cat C$ be a finitely complete category and $\tobj$ a class of trivial objects (closed under retracts). If $\RegEpi\subseteq\NEpi$, then every object in $Z$ in $\tobj$ is subterminal.
\end{proposition}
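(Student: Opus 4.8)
The plan is to reduce subterminality of a trivial object $Z$ to a single equation between the two projections of $Z\times Z$, and then to extract that equation from the universal property of a cokernel. Recall that $Z$ is subterminal precisely when the two projections $p_1,p_2\colon Z\times Z\to Z$ (which exist since $\cat C$ is finitely complete) coincide: if $p_1=p_2$, then any parallel pair $u,v\colon X\to Z$ satisfies $u=\comp{\langle u,v\rangle}{p_1}=\comp{\langle u,v\rangle}{p_2}=v$, and the converse is immediate. So the whole statement reduces to proving $p_1=p_2$.

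The key observation is that $p_1$ is a split epimorphism, split by the diagonal $\Delta\colon Z\to Z\times Z$, which satisfies $\comp{\Delta}{p_1}=\comp{\Delta}{p_2}=\id{Z}$. Since split epimorphisms are always regular epimorphisms (they are absolute coequalisers), the hypothesis $\RegEpi\subseteq\NEpi$ forces $p_1$ to be a \npnormepi{}, i.e.\ the \npcokernel{} of some map $f\colon A\to Z\times Z$. Here I would use the elementary but crucial fact that, because $Z\in\tobj$, \emph{every} morphism with codomain $Z$ is \nptrivial{}; in particular the composite $\comp{f}{p_2}\colon A\to Z$ is \nptrivial{}. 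By the universal property of the cokernel $p_1$ (see \cref{Z-def}), there is then a unique map $\psi\colon Z\to Z$ with $\comp{p_1}{\psi}=p_2$.

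Finally I would collapse $\psi$ using the diagonal: precomposing the identity $\comp{p_1}{\psi}=p_2$ with $\Delta$ and invoking $\comp{\Delta}{p_1}=\comp{\Delta}{p_2}=\id{Z}$ yields $\psi=\id{Z}$, whence $p_2=\comp{p_1}{\psi}=p_1$. By the reduction of the first paragraph, this shows $Z$ is subterminal. Note that neither mono-coreflectivity of $\tobj$ nor the full strength of \normality{} is needed: only finite products, the hypothesis $\RegEpi\subseteq\NEpi$, and the definition of \npcokernel{} enter the argument.

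The routine ingredients here (split epis are regular, and maps into a trivial object are trivial) are standard. The only genuine insight, and the step I expect to require the most care to present cleanly, is recognising that a product projection onto a trivial object is a split—hence, under the hypothesis, a normal—epimorphism, so that the cokernel's universal property becomes available at all; once that is in place, observing that the diagonal is exactly the section needed to force the comparison map $\psi$ to be the identity is the short final move.
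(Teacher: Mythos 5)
Your proof is correct. It reaches the conclusion by a more hands-on route than the paper: the paper first observes that, in the presence of kernel pairs, any map with the right lifting property with respect to regular epimorphisms is a monomorphism, combines this with \cref{ne-ort-tk} and the hypothesis $\RegEpi\subseteq\NEpi$ to obtain the general inclusion $\TKer\subseteq\Mono$, and then simply notes that the terminal map $Z\to 1$ has \nptrivial{} \npkernel{} $(Z,\id Z)$, hence is a monomorphism. You instead work directly with the kernel pair of $Z\to 1$, namely the two projections $p_1,p_2\colon Z\times Z\to Z$: the diagonal shows $p_1$ is split, hence regular, hence by hypothesis a \npcokernel{} of some $f$, and the universal property applied to the \nptrivial{} composite $\comp{f}{p_2}$ produces a comparison map that the section $\Delta$ collapses to the identity. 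This is at bottom the same mechanism the paper's auxiliary lemma relies on (the reflexivity splitting of a kernel pair plays exactly the role of your $\Delta$), but your version is self-contained -- it uses only finite products and the definition of \npcokernel{}, not \cref{ne-ort-tk} nor the general lifting lemma -- whereas the paper's packaging is shorter given the lemmas already in place and yields the stronger, reusable statement $\TKer\subseteq\Mono$ along the way.
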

\begin{proof}
In the presence of kernel pairs, maps having the right lifting property with respect to split epimorphisms are monomorphisms. Therefore, if $\RegEpi\subseteq\NEpi$, then by \zcref{ne-ort-tk} we have that $\Mono\supseteq \TKer$. In particular, for any object $Z$ in $\tobj$ the terminal map $Z\to1$ is a monomorphism, since it has a \nptrivial{} \npkernel{} -- namely $(Z,\id Z)$. 
\end{proof}
\begin{proposition}
Let $\cat C$ be a pointed category, and suppose that $\cat C$ is \normal{} for some class $\tobj$ of trivial objects. The following conditions are equivalent:
\begin{enumerate*}[(i)]
    \item \label{pn-vs-n-1} $\TKer\subseteq \Mono$,
    \item \label{pn-vs-n-2} $\NEpi\supseteq\StrEpi$,
    \item \label{pn-vs-n-3} $\NEpi\supseteq\RegEpi$,
    \item \label{pn-vs-n-4} $\tobj$ is the class of zero objects of $\cat C$ and $\cat C$ is normal (in the sense of \cite{NORMAL}).
\end{enumerate*}
\end{proposition}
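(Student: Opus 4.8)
The plan is to reduce the whole statement to the pointed comparison already available in \cref{pointed-pnormal-vs-normal}, the only genuine work being to show that each of \cref{pn-vs-n-1,pn-vs-n-2,pn-vs-n-3} forces $\tobj$ to collapse onto the zero object. Once $\tobj$ is the class of zero objects, the relative notions $\NEpi$ and $\TKer$ coincide with the absolute pointed classes $\pNEpi$ and $\pTKer$, the category $\cat C$ is a genuine (pointed) \pnormal{} category in the sense of \cref{pnormal-def}, and \cref{pointed-pnormal-vs-normal} applies. Before anything else I would record the facts that hold for free in any pointed $\tobj$-\normal{} category: first, $0\in\tobj$, since the coreflection $\cuni{0}\colon\corefl{0}\monoto 0$ is a monomorphism which, being split by the initial map $0\to\corefl{0}$, is a split epimorphism, hence invertible, so $\corefl{0}\iso 0$ and $0\in\tobj$ by iso-closure; and second, the standing inclusions $\pNEpi\subseteq\RegEpi\subseteq\StrEpi$ (cokernels are regular epimorphisms, regular epimorphisms are strong) together with $\Mono\subseteq\pTKer$ (the kernel of a monomorphism is trivial).

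The heart of the argument is the collapse step. For \cref{pn-vs-n-3} this is immediate from \cref{subterminal}, which gives that every $Z\in\tobj$ is subterminal; in a pointed category a subterminal object admits only the identity endomorphism, so combining the initial map $0\to Z$ with the terminal map $Z\to 0$ forces $Z\iso 0$. For \cref{pn-vs-n-2} I would merely note $\RegEpi\subseteq\StrEpi$, so that \cref{pn-vs-n-2}$\implies$\cref{pn-vs-n-3} and the previous case applies. For \cref{pn-vs-n-1} the argument is direct: for $Z\in\tobj$ the terminal map $Z\to 0$ has a trivial \npkernel{}, since, using $\corefl{0}\iso 0$ and \cref{monocoref}, its kernel is computed as the pullback of an isomorphism, namely $(Z,\id Z)$, which is trivial by \cref{eq-tker} as $Z\in\tobj$ (equivalently, one invokes \cref{charact-tker} directly). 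Thus $Z\to 0\in\TKer\subseteq\Mono$ by \cref{pn-vs-n-1}, so again $Z$ is subterminal and $Z\iso 0$. In each case, together with $0\in\tobj$ and iso-closure, this yields that $\tobj$ is exactly the class of zero objects of $\cat C$.

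With $\tobj$ identified in this way, $\cat C$ becomes pointed \pnormal{} with $\NEpi=\pNEpi$ and $\TKer=\pTKer$, and I would upgrade each one-sided hypothesis to an equality via the standing inclusions: \cref{pn-vs-n-1} becomes $\pTKer\subseteq\Mono$, hence $\Mono=\pTKer$; \cref{pn-vs-n-2} becomes $\StrEpi\subseteq\pNEpi$, hence $\StrEpi=\pNEpi$; \cref{pn-vs-n-3} becomes $\RegEpi\subseteq\pNEpi$, hence $\RegEpi=\pNEpi$. Each equality is a condition of \cref{pointed-pnormal-vs-normal} and therefore equivalent to $\cat C$ being normal, which together with $\tobj=\{0\}$ gives \cref{pn-vs-n-4}. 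For the converse, \cref{pn-vs-n-4} supplies $\tobj=\{0\}$ (so relative and absolute classes agree) and normality, and \cref{pointed-pnormal-vs-normal} returns $\StrEpi=\pNEpi=\NEpi$, $\RegEpi=\pNEpi=\NEpi$ and $\Mono=\pTKer=\TKer$, yielding \cref{pn-vs-n-1,pn-vs-n-2,pn-vs-n-3} simultaneously.

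I expect the main obstacle to be the collapse step for \cref{pn-vs-n-1}: unlike \cref{pn-vs-n-3}, it cannot lean on \cref{subterminal}, so one must argue by hand that membership of $Z$ in $\tobj$ places the terminal map $Z\to 0$ in $\TKer$, which is precisely where the identification $\corefl{0}\iso 0$ and the kernel characterisations of \cref{sec-triv-obj} do the work. Everything else is bookkeeping around the inclusions $\pNEpi\subseteq\RegEpi\subseteq\StrEpi$ and $\Mono\subseteq\pTKer$, which convert the one-sided hypotheses into the equalities demanded by \cref{pointed-pnormal-vs-normal}.
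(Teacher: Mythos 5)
Your argument is correct, and its essential mechanism is the same as the paper's: the crucial collapse of $\tobj$ onto the zero objects is obtained from \cref{subterminal}, and the rest is the identification of the relative classes with the absolute pointed ones plus the comparison of \cref{pointed-pnormal-vs-normal}. The organisational difference is that the paper runs a single cycle (i)$\implies$(ii)$\implies$(iii)$\implies$(iv)$\implies$(i), in which only (iii)$\implies$(iv) requires work (the steps (i)$\implies$(ii)$\implies$(iii) being free from the lifting-property characterisation of $\NEpi$ in the factorisation system and from $\RegEpi\subseteq\StrEpi$), whereas you organise the proof as a hub, showing separately that each of (i), (ii), (iii) forces $\tobj=\{0\}$. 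This costs you an extra hand-made collapse argument from (i) -- the observation that $\corefl 0\iso 0$ places every terminal map $Z\to 0$ with $Z\in\tobj$ in $\TKer$, hence in $\Mono$ -- which the paper's chaining sidesteps entirely; what it buys is that each condition is related to (iv) by a uniform two-step reduction rather than through the intermediate conditions. Both routes are sound, and your extra verifications ($0\in\tobj$, the standing inclusions $\pNEpi\subseteq\RegEpi\subseteq\StrEpi$ and $\Mono\subseteq\pTKer$) are all correct.
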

\begin{proof}

    The only non-obvious implication is \ref{pn-vs-n-3}$\implies$\ref{pn-vs-n-4}. Suppose 
    that \ref{pn-vs-n-3} holds. By \zcref{subterminal}, it follows that $\tobj$ is the class of zero objects (subterminal objects are obviously zero objects in a pointed category), and 
    that $\NEpi=\pNEpi=\RegEpi$. Therefore, $\cat C$ is a pointed, finitely complete category where regular epimorphisms are stable and coincide with normal epimorphisms, i.e.\ $\cat C$ is normal.
\end{proof}
\section{Exact sequences in \pnormal{} categories}
In this section, we study the properties of exact sequences in (non-pointed) \pnormal{} categories. We also consider functors that preserve such sequences along with finite limits, and show that they preserve much of the extra structure of \pnormal{} categories.
\label{sec-ex-seq}
\begin{proposition}
\label{ex-seq-pb-po}
Let $\cat C$ be a \normal{} category, and let \(
\begin{tikzcd}[cramped, column sep =2em]
    A\arrow[r, "f",normmono]&B\arrow[r, "g",normepi]&C
\end{tikzcd}
\)
be a \npexact{} sequence in $\cat C$. Then the composite $\comp fg$ factors through an object $Z$, i.e.\ there exist morphisms $\unin\colon A\to Z$ and $\cunn\colon Z\to C$ such that the following diagram commutes. 
\begin{equation}
\zcsetup{reftype=diagram} \label{exact-sequence-square}
\begin{tikzcd}
    A\arrow[r, "f"]\arrow[d, "\unin"']& B\arrow[d, "g"]
    \\
    Z\arrow[r, "\cunn"'] & C
\end{tikzcd}
\end{equation}
For any such factorisation, the following conditions are equivalent:
\begin{enumerate*}[(a)]
    \item\zcsetup{reftype=condition} \label{ex-a}\zcref{exact-sequence-square} is both a pullback and a pushout; 
    \item \zcsetup{reftype=condition} \label{ex-b}$(Z,\cunn)$ is a $\tobj$-coreflection of $C$; 
    \item \zcsetup{reftype=condition} \label{ex-c}$(Z,\unin)$ is a $\tobj$-reflection of $A$. 
\end{enumerate*}
\end{proposition}
\begin{proof}
    By \zcref{monocoref}, we may  consider a $\tobj$-coreflection $\cunn\colon Z\to C$ of $C$, and obtain a map $\unin\colon A\to Z$ such that \zcref{exact-sequence-square} is a pullback. Since $g$ is a \npnormepi{} and \zcref{exact-sequence-square} is a pullback, it follows that $\unin$ is also a \npnormepi{}. By the dual of \zcref{eq-tker}, $\unin$ is a $\tobj$-epi-reflection of $A$, and by the dual of \zcref{monocoref}, \zcref{exact-sequence-square} is a pushout.

    Now, given any diagram of the form of \ref{exact-sequence-square} satisfying either \ref{ex-b} or \ref{ex-c}, it must be isomorphic to the one constructed in the previous paragraph. This shows that \zcref{ex-b,ex-c} are equivalent, and that each of them implies \ref{ex-a}. 
    
    Conversely, given a diagram like \ref{exact-sequence-square} satisfying \zcref{ex-a}, then $\unin$ is a \nptrivial{} \npnormepi{}, and thus \ref{ex-b} holds by the dual of \zcref{eq-tker}.
\end{proof}
\begin{proposition}
\label{stability-of-ex-seq}
In a \normal{} category $\cat C$:
\begin{enumerate}
    \item\label{pr-ex-seq}binary products of \npexact{} sequences are \npexact{};
    \item\label{pb-ex-seq}  the pullback of a \npexact{} sequence along a morphism $c$ is \npexact{} if and only if $c\in\TKer$.
\end{enumerate} 
\end{proposition}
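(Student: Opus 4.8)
The plan is to prove the two parts independently, each time leaning on the factorisation system \facsys{}, the pullback-stability of \npnormepi{}s, and the facts about trivial objects collected in \cref{closure-property}; these are available because in a \normal{} category $\tobj$ is mono-coreflective by definition and subreflective by \cref{norm-subref-triv}. For the first part, let the two \npexact{} sequences have \npnormmono{}s $f\colon A\to B$, $f'\colon A'\to B'$ and \npnormepi{}s $g\colon B\to C$, $g'\colon B'\to C'$, and consider $f\times f'$ and $g\times g'$. I would first check that $f\times f'$ is the \npkernel{} of $g\times g'$: for a map $x\colon X\to B\times B'$ with components $x_1,x_2$, the composite of $x$ with $g\times g'$ is \nptrivial{} if and only if both $\comp{x_1}{g}$ and $\comp{x_2}{g'}$ are (by \cref{cp3} of \cref{closure-property}, since a binary product is a non-empty finite limit), if and only if $x_1,x_2$ factor uniquely through $f,f'$, if and only if $x$ factors uniquely through $f\times f'$. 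Then, as $g$ and $g'$ are \npnormepi{}s, \cref{product-norm-epi} makes $g\times g'$ a \npnormepi{}; being a \npnormepi{} with \npkernel{} $f\times f'$, it is the \npcokernel{} of $f\times f'$ by the dual of \cref{coker-of-ker}, so the product sequence is \npexact{}.

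For the second part, I first pin down the construction: pulling back the sequence along $c\colon C'\to C$ means applying the pullback functor, so I take $B'$ to be the pullback of $g$ along $c$, with projections $b\colon B'\to B$ and $g'\colon B'\to C'$, and $A'$ to be the pullback of $f$ along $b$, with projection $f'\colon A'\to B'$; then both squares of the resulting ladder are pullbacks and the pulled-back sequence is $A'\to B'\to C'$ with maps $f'$ and $g'$, which I denote $E'$. Two observations prepare the ground. First, $g'$ is a \npnormepi{}, being a pullback of the \npnormepi{} $g$ (\npnormepi{}s are pullback-stable in a \normal{} category). Second, since $f$ is the \npkernel{} of $g$, \cref{stable-nm} shows $f'$ is the \npkernel{} of $\comp{g'}{c}=\comp{b}{g}$. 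Consequently $E'$ is \npexact{} exactly when $f'$ is the \npkernel{} of $g'$: one direction is part of the definition of exactness, and conversely, if $f'$ is the \npkernel{} of $g'$ then $g'$, being a \npnormepi{}, is the \npcokernel{} of its \npkernel{} $f'$ by the dual of \cref{coker-of-ker}.

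With this reduction in hand, the forward implication is immediate: if $E'$ is \npexact{} then $g'$ is the \npcokernel{} of $f'$, the left square is a pullback by construction, so \cref{pb-e-1} of \cref{lemma-pb-ex-seq} gives $c\in\TKer$. For the converse, suppose $c\in\TKer$; I must upgrade ``$f'$ is the \npkernel{} of $\comp{g'}{c}$'' to ``$f'$ is the \npkernel{} of $g'$''. The defining composite $\comp{f'}{\comp{g'}{c}}$ is \nptrivial{}, so applying \cref{charact-tker} to $c$ lets me cancel $c$ and conclude that $\comp{f'}{g'}$ is \nptrivial{} too. Hence $f'$ factors through the \npkernel{} of $g'$; conversely that \npkernel{}, having \nptrivial{} composite with $g'$ and therefore with $\comp{g'}{c}$, factors through $f'$. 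Since both maps are monomorphisms, the two factorisations are mutually inverse, so $f'$ is the \npkernel{} of $g'$ and $E'$ is \npexact{}.

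The main obstacle is precisely this kernel comparison in the converse direction. It cannot be read off directly from \cref{lemma-pb-ex-seq}, whose converse half (\cref{pb-e-2} of that proposition) already presupposes $f'$ to be the \npkernel{} of $g'$; instead it rests on the cancellation characterisation of $\TKer$ in \cref{charact-tker}, used to pass from triviality of the length-three composite $\comp{f'}{\comp{g'}{c}}$ to triviality of $\comp{f'}{g'}$. Keeping track of which composite is the defining one for which \npkernel{}, and recognising from the start that exactness collapses to a single kernel condition because $g'$ is automatically a \npnormepi{}, is where the actual content lies; everything else is formal bookkeeping with the factorisation system \facsys{}.
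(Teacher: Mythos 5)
Your proof is correct and follows essentially the same route as the paper: part 1 via \cref{product-norm-epi} together with the componentwise nature of triviality for maps into products, and part 2 via pullback-stability of \npnormepi{}s, \cref{stable-nm}, and \cref{lemma-pb-ex-seq} for the forward direction. The only difference is that you spell out, via the cancellation property of \cref{charact-tker}, the step the paper dismisses as ``one readily verifies'' (upgrading ``$f'$ is the \npkernel{} of $\comp{g'}{c}$'' to ``$f'$ is the \npkernel{} of $g'$''), which is a welcome addition rather than a deviation.
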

\begin{proof}
    Given two \npexact{} sequences
    \[
    \begin{tikzcd}
        A\arrow[r, "f",normmono]& B\arrow[r, "g",normepi] & C &\text{and}&  A'\arrow[r, "f'",normmono] &B' \arrow[r, "g'",normepi]&C',
    \end{tikzcd}
    \]
    we know that $g\times g'$ is a \npnormepi{} by \zcref{product-norm-epi}. The fact that $f\times f'$ is the \npkernel{} of $g\times g'$  follows from the observation that the product of mono-coreflections is a mono-coreflection of the product, together with \zcref{monocoref}.

    Consider the following diagram,
    \[
    \begin{tikzcd}
     A'\arrow[r,"{f'}"]\arrow[d,"a"']&B'\arrow[r,"{g'}"] \arrow[d,"b"']&C'\arrow[d,"c"']
        \\
        A\arrow[r, "f"', normmono]& B\arrow[r,"g"',normepi]& C
    \end{tikzcd}
    \]
    with both squares being pullbacks and the lower row \npexact{}. In general, $g'\in\NEpi$ by \normality{}, and $f'$ is the \npkernel{} of $\comp bg=\comp{g'}c$ by \zcref{stable-nm}. Moreover, if $c\in\TKer$ one readily verifies that $f'$ is also the \npcokernel{} of $g'$.  The converse implication follows from \zcref{lemma-pb-ex-seq}.
\end{proof}
{
\renewcommand{\tobj}{\cat U}
\newcommand{\dobj}{\cat V}
\newcommand{\dbasecategory}{\cat D}
\newcommand{\di}{\renewcommand{\tobj}{\dobj}\renewcommand{\basecategory}{\dbasecategory}}
\begin{proposition}
    Let $\cat C$ and $\cat D$ be categories, and let $\tobj$ and $\dobj$ be subcategories of $\cat C$ and $\cat D$ respectively. Suppose $\cat C$ is \normal{} and $\cat D$ is {\di\normal{}}. If $F\colon \cat C\to\cat D$ is a functor preserving finite limits and sending \npexact{} sequences to {\di\npexact{}} sequences, then $F$ preserves: 
    \begin{enumerate}
        \item\label{ex-funct1} trivial objects and trivial maps ($F(\tobj)\subseteq\dobj$ and if $f$ is \nptrivial{}, then $Ff$ is {\di\nptrivial{}});
        \item\label{ex-funct2} coreflections into trivial objects (if $(\corefl X,\cuni X)$ is a coreflection of an object $X$ of $\cat C$ in $\tobj$, then $(F\corefl X,F\cuni X)$ is a coreflection of $FX$ in $\dobj$);
        \item\label{ex-funct3} kernels (if $(K,k)$ is a \npkernel{} of $f$ in $\cat C$, then $(FK, Fk)$ is a {\di\npkernel{}} of $Ff$);
        \item \label{ex-funct4}the normal-epi-trivial-kernel factorisations ($F(\TKer)\subseteq{\di\TKer}$ and $F(\NEpi)\subseteq{\di\NEpi}$).
    \end{enumerate}
\end{proposition}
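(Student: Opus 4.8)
The plan is to prove the four items in the stated order, since \cref{ex-funct2} will rest on \cref{ex-funct1}, \cref{ex-funct3} on \cref{ex-funct2}, and \cref{ex-funct4} on both; the engine throughout is the hypothesis that $F$ sends \npexact{} sequences to {\di\npexact{}} ones, read through the characterisations in \cref{monocoref,eq-tker}. For \cref{ex-funct1} I would begin with objects. Given $Z\in\tobj$, the coreflection of $Z$ is $(Z,\id Z)$, which by \cref{monocoref} is the \npkernel{} of $\id Z$; its \npcokernel{} $w\colon Z\to W$ exists since \npcokernel{}s of \npkernel{}s do. By \cref{coker-of-ker} the \npkernel{} of $w$ is again $(Z,\id Z)$, whose domain $Z$ lies in $\tobj$, so $w\in\TKer$ by \cref{eq-tker}; as $w$ is also in $\NEpi$, it lies in both classes of the factorisation system of \cref{norm-fs} and is therefore an isomorphism (\cref{fs-properties}). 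We thus have a \npexact{} sequence with maps $\id Z\colon Z\to Z$ and the isomorphism $w\colon Z\to W$, and applying $F$ exhibits $(FZ,\id{FZ})$ as a {\di\npkernel{}} of the isomorphism $Fw$; \cref{eq-tker} then forces $FZ\in\dobj$. Trivial \emph{maps} follow at once: a \nptrivial{} $f$ factors through some $W\in\tobj$, hence $Ff$ factors through $FW\in\dobj$ and is {\di\nptrivial{}}.

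For \cref{ex-funct2}, let $(\corefl X,\cuni X)$ be a coreflection of $X$ in $\tobj$, i.e.\ by \cref{monocoref} the \npkernel{} of $\id X$, and let $q\colon X\to Q$ be its \npcokernel{}; by \cref{coker-of-ker} the maps $\cuni X$ and $q$ form a \npexact{} sequence. Preserving it, $F$ makes $(F\corefl X,F\cuni X)$ a {\di\npkernel{}} of $Fq$, so in particular $F\cuni X$ underlies a {\di\npkernel{}}. Since $F\corefl X\in\dobj$ by \cref{ex-funct1}, \cref{eq-tker} combines these two facts and shows that $(F\corefl X,F\cuni X)$ is a coreflection of $FX$ in $\dobj$, which is exactly \cref{ex-funct2}. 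Item \cref{ex-funct3} is then essentially formal: if $(K,k)$ is the \npkernel{} of $f\colon A\to B$, \cref{monocoref} gives a pullback square realising $k$ as the pullback of the coreflection $\cuni B$ along $f$; as $F$ preserves finite limits the image square is again a pullback, and since $F\cuni B$ is the coreflection of $FB$ by \cref{ex-funct2}, the converse direction of \cref{monocoref} (applied in $\cat D$) identifies $(FK,Fk)$ as a {\di\npkernel{}} of $Ff$.

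Finally, for \cref{ex-funct4}: if $f\in\TKer$ then its \npkernel{} $(K,k)$ has $K\in\tobj$ by \cref{eq-tker}, and by \cref{ex-funct3,ex-funct1} the pair $(FK,Fk)$ is a {\di\npkernel{}} of $Ff$ with $FK\in\dobj$, so $Ff\in{\di\TKer}$. If instead $f\in\NEpi$, then $f$ is the \npcokernel{} of its \npkernel{} $k$, so $k$ and $f$ form a \npexact{} sequence; $F$ preserves it, whence $Ff$ is the {\di\npcokernel{}} of $Fk$, i.e.\ $Ff\in{\di\NEpi}$. Preservation of the whole factorisation is then immediate: from $f=\comp em$ with $e\in\NEpi$ and $m\in\TKer$ we get $Ff=\comp{Fe}{Fm}$ with $Fe\in{\di\NEpi}$ and $Fm\in{\di\TKer}$, which by uniqueness of factorisations in $\cat D$ (\cref{norm-fs}) is the required factorisation of $Ff$.

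The only genuine obstacle is \cref{ex-funct2}: once coreflections are known to be preserved, items \cref{ex-funct3,ex-funct4} reduce to preservation of finite limits and of individual exact sequences. The subtlety in \cref{ex-funct2} is that preservation of a single \npexact{} sequence yields, in $\cat D$, only that $F\cuni X$ underlies a kernel, and separately (through \cref{ex-funct1}) that its domain is trivial; it is precisely the equivalence in \cref{eq-tker} that allows one to combine these two pieces of information to recover the \emph{universal property} of a coreflection of $FX$, something that does not follow formally from preservation of limits alone.
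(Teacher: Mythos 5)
Your proof is correct and takes essentially the same approach as the paper's: each item is reduced to the preservation of a particular \npexact{} sequence together with preservation of finite limits, combined via \cref{monocoref,eq-tker}. The only difference is cosmetic: in items (1) and (2) you construct the cokernels of $\id Z$ and of the coreflection map and show they are isomorphisms, where the paper simply observes directly that the corresponding identity-padded sequences are already exact.
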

\begin{proof} 
\zcref[capfirst=true]{ex-funct1,ex-funct2} follow from the fact that $F$ sends \npexact{} sequences to {\di\npexact{}} sequences, together with the following observations:\begin{itemize}
    \item an object $X$ in $\cat C$ (respectively, in $\cat D$) lies in $\tobj $ (respectively, in $\dobj$) if and only if the sequence
    \(
    \begin{tikzcd}[cramped, sep=1.2em]
    X\arrow[r, equal]&X\arrow[r,equal]&X
    \end{tikzcd}
    \) 
    is \npexact{} (respectively, {\di\npexact{}});
    \item for an object $X$ in $\cat C$ (respectively, in $\cat D$), we have that $(\corefl X,\cuni X)$ is a $\tobj$-coreflection (respectively, a $\dobj$-coreflection) if and only if the sequence 
    \(
    \begin{tikzcd}[cramped, sep=1.7em]
    \corefl X\arrow[r, "\cuni X"]&X\arrow[r,equal]&X
    \end{tikzcd}
    \)
    is \npexact{} (respectively, {\di\npexact{}}).
\end{itemize}
\zcref[capfirst=true]{ex-funct3} follows immediately from the fact that $F$ preserves finite limits, from \zcref{ex-funct2} above and from the description of kernels in \zcref{monocoref}.

To prove that $F(\TKer)\subseteq{\di\TKer}$ just use \zcref{ex-funct3,ex-funct1}. To prove that $F(\NEpi)\subseteq{\di\NEpi}$, just notice that any \npnormepi{} $f$ is part of a \npexact{} sequence
\(
\begin{tikzcd}[cramped, sep=1.5em]
\cdot\arrow[r, "k "]&\cdot\arrow[r,"f"]&\cdot,
\end{tikzcd}
\)
where $k$ is the \npkernel{} of $f$.
\end{proof}
}
\section{A weaker notion: \wpnormal{} categories}
\label{weakly-normal}
In this section we want to explore a relaxation of the axioms of \pnormality{},  which we refer to as \emph{\wpnormality{}}. \zcref[capfirst=true]{wnorm-3} in \zcref{wnorm} below was first studied -- even in the non-pointed context -- by Grandis  (\cite{GRANDIS13,GRANDIS92}), albeit with different hypotheses, purposes and terminology. Closely related conditions in the pointed context were also investigated in \cite{TIM}. Besides the examples found in \zcref{pointed-case,Examples}, further instances of \wpnormal{} categories appear in \cite{GRANDIS13}, and a more recent one in \cite{CONNES}.

\begin{definition}
\label{wnorm}
Let $\cat C$ be a category and let $\tobj$ be a subcategory of $\cat C$. We say that $\cat C$ is \emph{\wnormal{}} if the following properties hold.
\begin{enumerate}[start=0]
    \item $\tobj$ is mono-coreflective in $\cat C$;
    \item\zcsetup{reftype=condition} \label{wnorm-1} $\cat C$ admits pullbacks along \npnormmono{}s;
    \item\zcsetup{reftype=condition} \label{wnorm-2}  $\cat C$ admits \npcokernel{}s of \npkernel{}s;
    \item \zcsetup{reftype=condition} \label{wnorm-3}the pullback of a \npnormepi{} along a \npnormmono{} is a \npnormepi{}.
\end{enumerate}
\end{definition}
In his works \cite{GRANDIS13,GRANDIS92}, Grandis did not relate the partial stability of normal epimorphisms to the factorisation system we have considered in this paper. Nevertheless, the characterisation given in \zcref{characterisation-fs} still holds for \wpnormal{} categories, though in the following weaker form.
{
\newcommand{\lcl}{\mathscr{E}}
\newcommand{\rcl}{\mathscr{M}}
\begin{proposition}
    Let $\cat C$ be a category and let $\cat Z$ be a mono-coreflective subcategory of $\cat C$. Suppose $\cat C$ admits pullbacks along \npnormmono{}s. Then the following are equivalent.
    \begin{enumerate}
        \item\label{wpnfs1} $\cat C$ is \wnormal{}.
        \item\label{wpnfs2} $\cat C$ admits \facsys{} as a factorisation system, and the class $\NEpi$ is stable under pullbacks along \npnormmono{}s.
    \end{enumerate}
    If $\cat C$ admits binary products and equalisers and $\cat Z$ is \sreflective{}, then the following are also equivalent to \zcref{wpnfs1,wpnfs2}.
    \begin{enumerate}
    \setcounter{enumi}{2}
        \item\label{wpnfs3} $\cat C$ admits a  factorisation system of the form $\bigl(\NEpi,\rcl\bigr)$ for some class $\rcl$ of arrows in $\cat C$, and the class $\NEpi$ is stable under pullbacks along \npnormmono{}s.
        \item\label{wpnfs4} $\cat C$ admits a  factorisation system of the form $\bigl(\lcl,\TKer\bigr)$ for some class $\lcl$ of arrows in $\cat C$ stable under pullbacks along \npnormmono{}s.
    \end{enumerate}
\end{proposition}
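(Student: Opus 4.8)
The plan is to mirror the proofs of \cref{norm-fs} and \cref{characterisation-fs}, checking at every step that each appeal to stability of $\NEpi$ (or of the left class $\mathscr E$) under pullback is in fact an appeal to stability under pullback \emph{along a \npnormmono{} only}, which is exactly what \wnormality{} provides. Throughout I first record that, under the standing hypothesis that $\cat C$ has pullbacks along \npnormmono{}s, all \npkernel{}s exist: by \cref{monocoref} the \npkernel{} of a map $f\colon A\to B$ is computed as a pullback of the coreflection $\cuni B$, which is a \npnormmono{}, so that the hypothesis of \cref{wnorm-1} suffices to form it.

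For \ref{wpnfs1}$\implies$\ref{wpnfs2} I would run the factorisation construction of \cref{norm-fs} verbatim: given $f$ with \npkernel{} $k$, form the \npcokernel{} $e$ of $k$, the induced map $m$, the \npkernel{} $h$ of $m$, and the induced map $e'$. The single place where stability was used is the assertion $e'\in\NEpi$; but by \cref{stable-nm} the relevant left-hand square is the pullback of $e$ along the \npnormmono{} $h$, so \cref{wnorm-3} applies and $e'$ is a \npnormepi{}. That $\NEpi$ and $\TKer$ contain isomorphisms and are closed under composition is then proved exactly as in \cref{norm-fs}, resting only on the lifting property \cref{ne-ort-tk} and using no pullbacks at all. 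Since \cref{wnorm-3} is precisely pullback-stability of $\NEpi$ along \npnormmono{}s, this yields \ref{wpnfs2}. The converse \ref{wpnfs2}$\implies$\ref{wpnfs1} is immediate as in the first bullet of \cref{characterisation-fs}: factoring an arbitrary $g$ exhibits the \npcokernel{} of any \npkernel{} through \cref{charact-tker} and the dual of \cref{coker-of-ker}, while the remaining axioms of \wnormality{} are among the hypotheses. Under the additional assumptions, \ref{wpnfs2}$\implies$\ref{wpnfs3} and \ref{wpnfs2}$\implies$\ref{wpnfs4} are trivial (take $\mathscr M=\TKer$ and $\mathscr E=\NEpi$, the stability clause transferring verbatim), and \ref{wpnfs3}$\implies$\ref{wpnfs2} reproduces the corresponding argument of \cref{characterisation-fs} unchanged, as the inclusion $\TKer\subseteq\mathscr M$ comes from \cref{ne-ort-tk} and \cref{fs-property}, and the reverse inclusion uses only \sreflectivity{} together with \cref{cp4}, invoking no pullback stability.

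The real work, and the step I expect to be the main obstacle, is \ref{wpnfs4}$\implies$\ref{wpnfs2}, namely proving $\mathscr E=\NEpi$. The inclusion $\NEpi\subseteq\mathscr E$ is formal, and for $\mathscr E\subseteq\NEpi$ I would follow the three steps \cref{letter-a,letter-b,letter-c} of \cref{characterisation-fs}. Steps \cref{letter-a} (cancellation of trivial composites) and \cref{letter-c} (maps in $\mathscr E$ are epimorphisms, via \cref{cp5} and the presence of equalisers) carry over without change, since they invoke no pullback stability. The delicate point is step \cref{letter-b}, where one factors $\langle f,g\rangle\colon A\to B\times C$ (whence the need for binary products) and must justify, via the cancellation property \cref{fs-property-cancel}, the claim that the induced map $x$ lies in $\mathscr E$ because it is a pullback of $e\in\mathscr E$. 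Here I would observe that in the relevant square $x$ is obtained by pulling $e$ back along $l$, the \npkernel{} of $\comp m{p_B}$, which is a \npnormmono{}; thus the weakened stability of $\mathscr E$ along \npnormmono{}s assumed in \ref{wpnfs4} is exactly enough to conclude $x\in\mathscr E$. The remainder of step \cref{letter-b} --- cancelling $m$ on the left and $x$ on the right by \cref{letter-a} and \cref{charact-tker} to force $\comp m{p_B}$ to be an isomorphism --- then goes through verbatim, and combining the three steps shows every map in $\mathscr E$ is the \npcokernel{} of its \npkernel{}, giving $\mathscr E=\NEpi$ and hence \ref{wpnfs2}. The crux of the whole argument is therefore the single observation that every pullback for which stability is invoked, both in the factorisation of \cref{norm-fs} and in step \cref{letter-b}, happens to be a pullback along a \npnormmono{}.
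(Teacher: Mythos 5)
Your proposal is correct and takes essentially the same approach as the paper, whose entire proof is the remark that the argument of \cref{characterisation-fs} carries over unchanged; you have simply made explicit the verification that every pullback for which stability is invoked (the square producing $e'$ in the factorisation of \cref{norm-fs}, and the square producing $x$ in step \cref{letter-b}) is a pullback along a \npnormmono{}, which is precisely what justifies the paper's one-line proof.
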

\begin{proof}
    The proof of \zcref{characterisation-fs} essentially works without any changes for this proposition as well.
\end{proof}
}

Some other properties we proved for \normal{} categories (in particular those in Propositions~\ref{norm-subref-triv}, \ref{pb-po-prop}.\ref{pb-po-nm}, \ref{lemma-pb-ex-seq}, \ref{slice-is-normal}, \ref{ex-seq-pb-po}) extend to \wnormal{} categories with no changes, as they neither require the full stability of normal epimorphisms nor the existence of all finite limits. 

Furthermore, Grandis, in his work \cite{GRANDIS13}, also considered another interesting property which is closely related to \wpnormality{}: a non-pointed generalisation of Noether's first isomorphism theorem. Suppose $\cat C$ is a category with a mono-coreflective subcategory $\tobj$ satisfying \zcref{wnorm-1,wnorm-2} of \zcref{wnorm} of \wnormality{}. Then, for any diagram of the form shown below on the left, where $m$, $n$ (and therefore $j$) are \npnormmono{}s, we can construct the corresponding diagram on the right, where $p$, $q$ and $r$ are the \npcokernel{}s of $m$, $n$ and $j$ respectively, and $\phi$ and $\psi$ are the induced maps making the diagram commutative.
\begin{equation}
\zcsetup{reftype=diagram} \label{Noet-iso-diagram}
\hskip\textwidth minus \textwidth
\begin{tikzcd}[baseline=(current bounding box.center)]
    & N\arrow[d, normmono, "n"]\arrow[dl, bend right, normmono, "j"']
    \\
    M\arrow[r, normmono, "m"'] & A
\end{tikzcd}
\hskip\textwidth minus \textwidth
\begin{tikzcd}[baseline=(current bounding box.center)]
N\arrow[r,equal]\arrow[d, "j"',normmono] & N\arrow[d, normmono, "n"]
\\
M\arrow[r, normmono, "m"']\arrow[d,normepi, "r"] & A\arrow[d,normepi, "q"]\arrow[r,normepi, "p"] & A/M\arrow[d,equal]
\\
M/N\arrow[r, "\phi"'] & A/N\arrow[r,"\psi"'] & A/M
\end{tikzcd}
\hskip\textwidth minus \textwidth
\end{equation}
The following proposition holds, and its core argument can be found in \cite{GRANDIS13}, though formulated in a different setting.
\begin{proposition}
\label{noether-np}
    Let $\cat C$ be a category and $\tobj$ a mono-coreflective subcategory satisfying \zcref{wnorm-1,wnorm-2} of \zcref{wnorm} of \wnormality{}. The following are equivalent.
\begin{enumerate}
    \item $\cat C$ and $\cat Z$ satisfy \zcref{wnorm-3} of \zcref{wnorm} (i.e.\ $\cat C$ is \wnormal{}).
    \item The following properties hold for $\cat C$:
    \begin{enumerate}[2a)]
        \item the composition of \npnormepi{}s is a \npnormepi{};
        \item in a situation like the one depicted in \zcref{Noet-iso-diagram}, the diagram on the right has \npexact{} rows (i.e. $M/N$ is a $\tobj$-normal subobject of  $A/N$ and $\frac{A/N}{M/N}\iso A/M$).
    \end{enumerate}
\end{enumerate}
    
\end{proposition}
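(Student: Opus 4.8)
The plan is to prove the two implications separately. Both directions rest on the same toolkit: pullbacks along \npnormmono{}s exist by \cref{wnorm-1}, the pullback of a \npkernel{} is the \npkernel{} of the composite by \cref{stable-nm}, \npnormmono{}s are stable under pullback, and one may cancel \nptrivial{} maps as in \cref{triv-cancel} and its dual. Throughout I keep the notation of \cref{Noet-iso-diagram}, where $p$, $q$, $r$ are the \npcokernel{}s of $m$, $n$, $j$ and $\phi$, $\psi$ are the induced maps, characterised by $\comp r\phi=\comp m q$ and $\comp q\psi=p$.

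\emph{(1)$\implies$(2).} For \emph{(2a)}, given composable \npnormepi{}s $f,g$ with \npkernel{}s $k,l$, \cref{stable-nm} identifies the \npkernel{} $k'$ of $\comp f g$ with the pullback of $l$ along $f$; the opposite edge $f'$ of this pullback square, being the pullback of the \npnormepi{} $f$ along the \npnormmono{} $l$, is a \npnormepi{} by \cref{wnorm-3}. To see that $\comp f g$ is the \npcokernel{} of $k'$, take $u$ with $\comp{k'}u$ \nptrivial{}; since $k$ factors through $k'$, the composite $\comp k u$ is \nptrivial{}, so $u=\comp f w$ for some $w$; then $\comp{f'}{(\comp l w)}=\comp{k'}u$ is \nptrivial{}, and the dual of \cref{triv-cancel} applied to the \npnormepi{} $f'$ forces $\comp l w$ \nptrivial{}, so $w$ factors through $g$ and $u$ through $\comp f g$ (uniqueness being clear as $\comp f g$ is epic). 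For \emph{(2b)}, let $(K_\psi,\kappa)$ be the \npkernel{} of $\psi$. Pulling $\kappa$ back along $q$ gives, by \cref{stable-nm}, the \npkernel{} of $\comp q\psi=p$, namely $m$; the opposite edge $t\colon M\to K_\psi$, being the pullback of the \npnormepi{} $q$ along the \npnormmono{} $\kappa$, is a \npnormepi{} by \cref{wnorm-3}. A short chase shows that $j$ is the \npkernel{} of $t$, so $t$ is the \npcokernel{} of $j$; comparing with $r$ produces an isomorphism $M/N\iso K_\psi$ through which $\phi$ becomes $\kappa$, whence $\phi\in\NMono$ and $M/N$ is a $\tobj$-normal subobject of $A/N$. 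Finally, $\psi$ is the \npcokernel{} of $\phi$: if $\comp\phi u$ is \nptrivial{} then $\comp m{(\comp q u)}=\comp r{(\comp\phi u)}$ is \nptrivial{}, so $\comp q u$ factors through $p=\comp q\psi$ and, cancelling the epimorphism $q$, through $\psi$.

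\emph{(2)$\implies$(1).} Let $g\colon B\nepito C$ be a \npnormepi{} with \npkernel{} $l$, and $n'\colon C'\nmonoto C$ a \npnormmono{} with \npcokernel{} $c$; form the pullback of $g$ along $n'$ with projections $b\colon B'\to B$ and $g'\colon B'\to C'$. By \cref{stable-nm}, $b$ is the \npkernel{} of $\comp g c$ and a \npnormmono{}, and $l$ factors through $b$ via some $j$ which is the \npkernel{} of $g'$. Applying \cref{Noet-iso-diagram} with $A=B$, $m=b$, $n=l$ and this $j$, and identifying $A/N=B/L$ with $C$ (as $g$ is the \npcokernel{} of its \npkernel{} $l$), property \emph{(2b)} makes the bottom row exact, so $\phi$ is the \npkernel{} of $\psi$; from $\comp{g'}{n'}=\comp b g=\comp r\phi$ and $g'=\comp r{g''}$ one gets $\comp{g''}{n'}=\phi$. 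Now invoke \emph{(2a)}: as $g$ and $c$ are \npnormepi{}s, so is $\comp g c$, which is therefore the \npcokernel{} of its \npkernel{} $b$; comparing with $p$ yields an isomorphism $\eta\colon B/B'\to C/C'$ with $\comp p\eta=\comp g c$, hence $\comp\psi\eta=c$ after cancelling the epimorphism $g$. Since $\eta$ is invertible, $\psi$ and $c$ have the same \npkernel{}, so $\phi$ and $n'$ are both \npkernel{}s of $c$ (recall $n'$ is the \npkernel{} of $c$ by \cref{coker-of-ker}); the canonical isomorphism between them must equal $g''$, as $n'$ is monic and $\comp{g''}{n'}=\phi$, and therefore $g'=\comp r{g''}$ is a \npnormepi{} — that is, \cref{wnorm-3} holds.

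\emph{Main obstacle.} The genuinely delicate point is \emph{(2)$\implies$(1)}. The isomorphism theorems yield $\comp{g''}{n'}=\phi$ with $\phi$ a \npnormmono{} and $g''$ monic, but monicity alone cannot promote $g''$ to an isomorphism: every direct cancellation one attempts requires $g''$ to already be a \npnormepi{}, which is essentially the conclusion. This circularity is broken exactly by the composition property \emph{(2a)} — recognising $\comp g c$ as a \npnormepi{}, hence as the \npcokernel{} of $b$, identifies $C/C'$ with $B/B'$ and exhibits $g''$ as the comparison of two \npkernel{}s of a single map, forcing it to be invertible. Everything else reduces to routine universal-property chases.
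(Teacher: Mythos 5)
Your proof is correct. Note, however, that the paper does not actually spell out an argument here: its ``proof'' is a one-line deferral to Grandis, observing only that the composability of \npnormepi{}s (which Grandis takes as a standing hypothesis) holds in \wnormal{} categories because \npnormepi{}s form the left class of a factorisation system. Your write-up therefore supplies in full the argument the paper outsources. Two points of comparison are worth recording. First, for (2a) you derive closure of \npnormepi{}s under composition directly from partial pullback-stability (identifying the \npkernel{} of $\comp fg$ as the pullback of $\ker g$ along $f$ via \cref{stable-nm}, and using the induced \npnormepi{} on kernels to verify the \npcokernel{} property), whereas the paper's route is to invoke the weak factorisation-system characterisation; both are valid, and yours has the merit of not presupposing that characterisation. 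Second, your (2)$\implies$(1) — recovering stability along \npnormmono{}s from (2a) and the isomorphism theorem by exhibiting the comparison map $g''$ as a morphism between two \npkernel{}s of $c=\coker n'$ — is exactly the delicate step, and your use of (2a) to identify $\comp gc$ as the \npcokernel{} of $b$ (hence $B/B'\iso C/C'$) is the right way to break the circularity you flag. All the individual steps check out against \cref{stable-nm}, \cref{triv-cancel} and its dual, and \cref{coker-of-ker}; the needed pullbacks exist by \cref{wnorm-1} since in each case one leg of the cospan is a \npnormmono{} (including the \npkernel{} of $\psi$, which exists as a pullback of a coreflection by \cref{monocoref}).
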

\begin{proof}
    A similar equivalence is proved by Grandis in \cite{GRANDIS13} under the hypothesis of composable \npnormepi{}s, which we know is satisfied for \wnormal{} categories, since \npnormepi{}s are part of a factorisation system.
\end{proof}
\section{Examples}
\renewcommand*{\thesubsection}{\thesection.\alph{subsection}}\label{Examples}
\subsection{{Pointed examples and non-examples}}
In a pointed category $\cat C$, the subcategory $\tobj$ of zero objects is automatically epi-reflective and mono-coreflective, and $\cat C$ is \normal{} (respectively \wnormal{}) if and only if it is \pnormal{} (respectively \wpnormal{}).

\subsection{Trivial examples}
{\zcsetup{reftype=example} \label{trivial-ex}}
{
\renewcommand{\tobj}{\cat C}
For every category $\cat C$, the largest coreflective subcategory of $\cat C$ is $\cat C$ itself. If $\cat C$ has finite limits then it is \normal{}. \npnormepi{}s are exactly isomorphisms, and every map has a \nptrivial{} \npkernel{}.
}

If a category $\cat C$ has an initial object, then the full subcategory $\tobj$ of initial objects is the smallest coreflective subcategory of $\cat C$. If the initial object is strict (meaning that every morphism in $\cat C$ having the initial object as codomain is an isomorphism) and $\cat C$ is finitely complete, then $\cat C$ is \normal{}, and once again \npnormepi{}s coincide with isomorphisms, and every map has a \nptrivial{} \npkernel{}. In this case $\tobj$ is not reflective in $\cat C$ (unless $\tobj=\cat C$).
\subsection{Slices}
{\zcsetup{reftype=example} \label{ex-slices}}
Let $\cat C$ be a \normal{} category and $C$ an object in $\cat C$. It is well-known that when $\cat C$ is finitely complete, then so is the the slice category $\slice CC$. In particular, non-empty connected limits (such as pullbacks) are computed as in $\cat C$. The category $\slice Z C$ is a mono-coreflective subcategory of $\slice CC$, where the coreflection in $\slice ZC$ of an object $x\colon X\to C$ in $\slice CC$ is simply $(\comp{\cuni X}x\colon \corefl X\to C,\cuni X)$, with $(\corefl X,\cuni X)$ a coreflection of $X$ in $\cat Z$. It is easy to check that a map in $\slice CC$ is $(\slice ZC)$-trivial if and only if the same map, seen as a map in $\cat C$, is \nptrivial. 

By the above description of pullbacks and coreflections, it follows that $(\slice ZC)$-kernels can be computed as \npkernel{}s in $\cat C$.

Suppose now that $(h\colon H\to C, k)$ is the $(\slice ZC)$-kernel of a map $f$ in $\slice CC$ from $x\colon X\to C$ to $y\colon Y\to C$. By the previous paragraph, we have that $k$ is the \npkernel{} of $f$ in $\cat C$. We can thus consider its \npcokernel{} $(P,q)$ in $\cat C$. Since $\comp kf$ is $(\slice ZC)$-trivial in $\slice CC$, then it is also \nptrivial{} in $\cat C$. Therefore, there exists a map $f'$ in $\cat C$ such that $f=\comp q {f'}$. Call $p$ the composite $\comp{f'}y$.
\begin{equation*}
\begin{tikzcd}
    H\arrow[r, "k",normmono]\arrow[ddr, "h"', bend right] & X\arrow[r, "q", normepi]\arrow[d, "f"]\arrow[dd, bend right, "x"']& P\arrow[dl, "{f'}"{near end, xshift=-.5em}, bend left]\arrow[ddl, "p", bend left]
    \\
    & Y\arrow[d, "y"]
    \\
    &C
\end{tikzcd}
\end{equation*}
One can show that $(p\colon P\to C, q)$ is a $(\slice ZC)$-cokernel of $k$. Since every $(\slice ZC)$-cokernel is also the $(\slice ZC)$-cokernel of its $(\slice ZC)$-kernel, we can conclude that a map is a $(\slice ZC)$-normal epimorphism in $\slice CC$ if and only if the same map is a \npnormepi{} in $\cat C$. Stability of $(\slice ZC)$-normal epimorphisms in $\slice CC$  then follows from the stability of \npnormepi{}s in $\cat C$.  
\subsection{Functor categories}
{\zcsetup{reftype=example} \label{ex-fun-cat}}
Let $\cat C$ be a \normal{} category, and let $\cat B$ be any category. Then the functor category $\Fun BC$ is ($\Fun BZ$)-\pnormal. First, the subcategory $\Fun BZ$ is mono-coreflective in $\Fun BC$: if $S\colon \cat Z\to\cat C$ is a right adjoint to the inclusion $I\colon\cat Z\to \cat C$, then $\comp {(-)}S\colon \Fun BC\to\Fun BZ$ is right adjoint to the inclusion $\comp{(-)}I\colon\Fun BZ\to\Fun BC$. The component of the counit of this adjunction at a functor $F\colon \cat B\to \cat C$ is obtained as the whiskering $\comp F\varepsilon$, where $\varepsilon$ is the counit of the adjunction $I \adj S$. Since limits existing in $\cat C$ also exist in $\Fun BC$ and they are computed pointwise in $\Fun BC$, the latter is finitely complete, and $(\Fun BZ)$-kernels in $\Fun BC$ are also computed pointwise. Finally, one easily checks that $(\Fun BZ)$-cokernels of $(\Fun BZ)$-kernels exist and are computed pointwise as well.
\subsection{Categories of monomorphisms}
{
\newcommand{\monocl}{\mathscr S}
\newcommand{\monocat}{\cat S}
For any category $\cat E$, we denote by $\Arr E$ the category whose objects are the arrows of $\cat E$, and where morphisms from $x\colon X\to X_0$ to $y\colon Y\to Y_0$ are pairs $(f,f_0)$, with $f\colon X\to Y$ and $f_0\colon X_0\to Y_0$ such that $\comp fy=\mcomp{x,f_0}$.
\begin{definition}
    Let $\cat E$ be a category, and let $\monocl$ be a class of monomorphisms in $\cat E$ containing all identities.
    \begin{enumerate}[a)]
        \item Suppose $\cat E$ has pullbacks. We say that $\monocl$ is a \emph{\wscm{}} if the following properties hold.
        \begin{enumerate}[1.]
            \item For all $f$ and $g$, composable maps in $\cat E$, if $g$ and $\comp fg$ are in $\monocl$, then $f$ is in $\monocl$;
            \item the pullback of a map in $\monocl$ along any map in $\cat E$ is again in $\monocl$.
        \end{enumerate}
        \item Suppose $\cat E$ is finitely complete and call $\monocat$ the full subcategory of $\Arr E$ generated by $\monocl$. We say that $\monocl$ is a \emph{\scm{}} if $\monocat$ is closed under finite limits in $\Arr E$.
    \end{enumerate}
\end{definition}
\begin{lemma}
    Let $\cat E$ be a finitely complete category. Then every \scm{} $\monocl$ on $\cat E$ is \wstab{}.
\end{lemma}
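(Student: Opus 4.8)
The plan is to verify directly the two defining conditions of a \wscm{} -- the cancellation property and pullback-stability -- from the single hypothesis that $\monocat$, the full subcategory of $\Arr E$ generated by $\monocl$, is closed under finite limits. The key device is that $\Arr E$ is a functor category (on the interval $\mathbf 2=\{0\to1\}$), so finite limits in $\Arr E$ are computed pointwise -- separately on domains and on codomains -- the arrow underlying the limit being the canonical comparison map between the two pointwise limits. Since $\monocl$ contains all identities, every identity arrow is an object of $\monocat$, and the whole argument reduces to exhibiting the two relevant $\cat E$-pullbacks as limiting arrows of suitable cospans of objects of $\monocat$.

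For pullback-stability, I would fix $m\colon M\to B$ in $\monocl$ and an arbitrary $g\colon A\to B$, and form in $\Arr E$ the pullback of the morphisms $(m,\id B)\colon m\to\id B$ and $(g,g)\colon\id A\to\id B$, both with target the object $\id B$. Computed pointwise, the domain-level pullback is $M\times_B A$, the codomain-level pullback of $\id B$ along $g$ is (up to isomorphism) $A$, and a direct check identifies the induced comparison arrow with the pullback projection $m'\colon M\times_B A\to A$ of $m$ along $g$. As $m$, $\id A$ and $\id B$ all lie in $\monocl$ and $\monocat$ is closed under finite limits, we obtain $m'\in\monocl$, which is exactly pullback-stability.

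For the cancellation property, I would take composable $f\colon X\to Y$ and $g\colon Y\to Z$ with $g$ and $\comp fg$ in $\monocl$, and first note that, $g$ being a monomorphism, $f$ is itself a pullback of $\comp fg$ along $g$: the cone $(X,\id X,f)$ is universal, because any $a\colon T\to X$ and $b\colon T\to Y$ with $\comp a{(\comp fg)}=\comp bg$ satisfy $b=\comp af$ by monicity of $g$, so that $a$ is the unique mediating map. I would then realise this pullback inside $\Arr E$ as the pullback of $(\comp fg,\id Z)\colon\comp fg\to\id Z$ and $(g,g)\colon\id Y\to\id Z$, whose limiting arrow is $f$ (up to isomorphism). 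Since $\comp fg$, $\id Y$ and $\id Z$ lie in $\monocl$, closure of $\monocat$ under finite limits yields $f\in\monocl$.

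The step needing the most care is the translation between pullbacks in $\cat E$ and finite limits in $\Arr E$: one must choose the cospans so that their pointwise limits reconstruct exactly the intended $\cat E$-pullbacks, and then check that the comparison arrow between the two pointwise limits is the projection one wants rather than some other induced map. Once this bookkeeping is settled, both conditions reduce to a single application of closure under finite limits. A minor point, used tacitly throughout, is that $\monocat$ is replete -- being a full subcategory containing the identities and closed under isomorphism -- which allows passing from ``isomorphic in $\Arr E$ to an arrow of $\monocl$'' to ``lying in $\monocl$''.
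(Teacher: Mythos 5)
Your argument is correct and essentially identical to the paper's: both conditions are obtained by exhibiting the relevant arrow of $\cat E$ as a pullback in $\Arr E$ of a cospan whose three objects ($m$ or $\comp fg$ together with two identities) lie in $\monocat$, and then invoking closure under finite limits; the two pullback squares you describe are precisely the ones displayed in the paper's proof. The only difference is cosmetic — the paper phrases the cancellation step for $g$ merely a monomorphism rather than $g\in\monocl$, which your argument in fact also only uses.
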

\begin{proof}
    First, we show that, if $f\colon A\to B$ and $g\colon B\to C$ are maps in $\cat E$, with $h=\comp fg\in\monocl$ and $g$ a monomorphism, then $f$ is in $\monocl$. Notice that the following square is a pullback in $\Arr E$.
    \[
    \begin{tikzcd}
        f\arrow[r, "{(\id A,g)}"]\arrow[d, "{(f,\id B)}"'] &[1.8em] h\arrow[d, "{(h,\id C)}"]
        \\
        \id B\arrow[r, "{(g,g)}"'] &\id C
    \end{tikzcd}
    \]
    Since $\monocat$ is closed under limits in $\Arr E$, and $h$, $\id B$ and  $\id C$ are all in $\monocl$, we conclude that $f\in\monocl$ by closure under limits.

    Suppose we are now given a pullback square in $\cat E$ like the one below on the left, with $f\in \monocl$. Then the square on the right is a pullback square in $\Arr E$.
    \[
    \begin{tikzcd}
    A'\arrow[r, "a"]\arrow[d,"{f'}"'] & A\arrow[d, "f"] &[4em] f'\arrow[d, "{(f',\id {B'})}"']\arrow[r, "{(a,b)}"]& f \arrow[d, "{(f,\id B)}"]
    \\
    B'\arrow[r, "b"'] & B & \id {B'} \arrow[r,"{(b,b)}"']&\id B
    \end{tikzcd}
    \]
    Since $f$, $\id B$ and $\id{B'}$ are all in $\monocl$, we conclude that $f'$ is also in $\monocl$ by closure under limits.
\end{proof}
\begin{example}
    In a category $\cat E$ with pullbacks, the following classes of monomorphisms are all \wstab{}: 
    \begin{enumerate*}
        \item all monomorphisms; \item strong monomorphisms; \item regular monomorphisms; \item normal monomorphisms, when $\cat E$ is pointed; \item more in general, \npnormmono{}s for any reflective subcategory $\tobj$ of $\cat E$ (reflectivity ensures that identity maps are \npnormmono{}s). 
    \end{enumerate*}
    
    If $\cat E$ is finitely complete, the classes of monomorphisms and of strong monomorphisms, and any class $\mathscr R$ of monomorphisms in $\cat E$ which is part of a factorisation system $(\mathscr L,\mathscr R)$ on $\cat E$ are all \scms{}.
\end{example}
\begin{proposition}
    Let $\cat E$ be any category and 
    let $\monocl$ be a class of monomorphisms in $\cat E$ containing all identity maps. Call $\tobj$ the full subcategory of $\Arr E$ whose objects are all isomorphisms in $\cat E$, and call $\monocat$ the full subcategory of $\Arr E$ generated by $\monocl$.
    \begin{enumerate}
        \item\label{wn-monocat} If $\cat E$ has pullbacks and $\monocl$ is a \wscm{}, then $\monocat$ is \wnormal{}.
        \item\label{n-monocat} If $\cat E$ is finitely complete and $\monocl$ is a \scm{}, then $\monocat$ is \normal{}.
    \end{enumerate}
\end{proposition}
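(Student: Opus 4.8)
The plan is to make the \pnormal{} structure of $\monocat$ completely explicit, recognising it as the restriction to $\monocl$ of the codomain fibration $\Arr E\to\cat E$. First I would identify the coreflection into $\tobj$: since the trivial objects are the isomorphisms of $\cat E$, the coreflection of an object $m\colon M\to M_0$ is $\id_M$, with counit $(\id_M,m)\colon\id_M\to m$; as monomorphisms in $\Arr E$ are computed componentwise, this counit is a monomorphism, so $\tobj$ is mono-coreflective. Using \cref{monocoref} I would then compute the $\tobj$-kernel of a morphism $(f,f_0)\colon m\to n$ as the pointwise pullback of the counit $(\id_N,n)$ along $(f,f_0)$, which yields two descriptions I will use repeatedly: a morphism lies in $\TKer$ precisely when its defining square is a pullback in $\cat E$ (a \emph{cartesian} morphism), and the \npnormmono{}s into $m$ are exactly the morphisms $(\id_M,j)\colon\ell\to m$ coming from a factorisation $m=\comp{\ell}{j}$ with $\ell,j\in\monocl$. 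That $j$ itself belongs to $\monocl$ is not an extra hypothesis but a consequence of the cancellation axiom of a \wscm{}, applied to $\comp{\ell}{j}=m\in\monocl$ with $\ell\in\monocl$.

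Next I would identify the \npnormepi{}s with the \emph{vertical} morphisms (those whose codomain component is an isomorphism). Every $(f,f_0)\colon m\to n$ factors, writing $P=N\times_{N_0}M_0$ with projections $\pi_N,\pi_{M_0}$, as a vertical morphism $\bigl(\langle f,m\rangle,\id_{M_0}\bigr)\colon m\to\pi_{M_0}$ followed by the cartesian morphism $(\pi_N,f_0)\colon\pi_{M_0}\to n$; a short diagram chase, using that a cartesian morphism realises its domain as a pullback of its codomain, supplies the unique diagonal filler, so (vertical, cartesian) is an orthogonal factorisation system, stable because the codomain component of any pullback of a vertical morphism is a pullback of an isomorphism. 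To see that its left class is exactly $\NEpi$, I would check that the $\tobj$-cokernel of a normal mono $(\id_M,j)\colon\ell\to m$ is the vertical morphism $(\ell,\id_{M_0})\colon m\to j$ — the uniqueness clause following from the monomorphicity of the object $j$ — so that every vertical morphism is the cokernel of its kernel and hence a normal epimorphism, while conversely every normal epimorphism is vertical by \cref{ne-ort-tk} and \cref{fs-property} of \cref{fs-properties}. Thus \facsys{} is the stable factorisation system (vertical, cartesian).

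For part~\ref{n-monocat}, a \scm{} makes $\monocat$ closed under finite limits in $\Arr E$, so $\monocat$ is finitely complete, all the pullbacks above are computed pointwise and remain in $\monocat$, and vertical morphisms are stable under arbitrary pullback; \cref{characterisation-fs} (the equivalence of \ref{pnfs1} and \ref{pnfs2}) then gives \normality{}. For part~\ref{wn-monocat} I would verify only \cref{wnorm-1,wnorm-2,wnorm-3} of \cref{wnorm} using just the two \wscm{} axioms: the pullback of an arbitrary $(a,a_0)\colon x\to m$ along a normal mono $(\id_M,j)$ is formed pointwise, its codomain projection is a pullback of $j\in\monocl$ (hence in $\monocl$ by pullback-stability), and its comparison arrow then lies in $\monocl$ by cancellation — giving \cref{wnorm-1}, and, with kernels thus available (via \cref{monocoref}), \cref{wnorm-2} through the cokernels computed above. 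Finally the pullback of a vertical morphism along a normal mono stays vertical, since the relevant codomain pullback is along an isomorphism, which is \cref{wnorm-3}.

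The step I expect to be the main obstacle is the bookkeeping that keeps every constructed object inside $\monocl$ in the \wscm{} case. Unlike the \scm{} case, there is no blanket closure under limits, so each pullback object must be placed in $\monocl$ by hand, and the argument genuinely needs both \wscm{} axioms at once — pullback-stability to put the codomain projection in $\monocl$, and cancellation to then put the comparison arrow in $\monocl$. The fact that unlocks this is the observation that a normal mono $(\id_M,j)$ has \emph{both} of its data $\ell$ and $j$ in $\monocl$, itself a first application of the cancellation axiom.
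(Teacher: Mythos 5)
Your overall route is the same as the paper's, only made fully explicit: you identify the coreflection of $m$ as $(\id M,m)\colon \id M\to m$, compute \npkernel{}s as pointwise pullbacks of these counits, characterise \npnormmono{}s as the maps $(\id M,j)$ and \npnormepi{}s as the vertical maps $(s,\id{})$ with $s\in\mathscr S$, and recognise the resulting \facsys{} system as the (vertical, cartesian) factorisation of the codomain fibration, whose stability is immediate because the codomain component of a pullback of a vertical map is a pullback of an isomorphism. This matches the paper's sketch and fills in its details.

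There is, however, one step that fails as written, and it is exactly the one you single out as ``the fact that unlocks'' the \wscm{} case. You claim that for a \npnormmono{} $(\id M,j)\colon\ell\to m$ the membership $j\in\mathscr S$ follows from the cancellation axiom applied to $\comp{\ell}{j}=m\in\mathscr S$ with $\ell\in\mathscr S$. The cancellation axiom runs in the opposite direction: from $g\in\mathscr S$ and $\comp fg\in\mathscr S$ it lets you conclude that the \emph{first} factor $f$ lies in $\mathscr S$, not the second. The inference you want --- first factor and composite in $\mathscr S$, hence second factor in $\mathscr S$ --- is false in general; already for $\mathscr S$ the class of all monomorphisms, $\comp fg$ can be monic with $f$ monic while $g$ is not. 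The fact you need is nevertheless true, but for a different reason: every \npnormmono{} is by definition the \npkernel{} of some morphism $(f,f_0)\colon m\to n$, and your own computation of that kernel exhibits $j$ as the pullback of the object $n\in\mathscr S$ along $f_0$, so pullback-stability of a \wscm{} places $j$ in $\mathscr S$. With that substitution the remainder of your argument --- the cokernel landing on the object $j$, the pointwise pullbacks along normal monomorphisms, and your (correct) uses of cancellation to put the various comparison arrows in $\mathscr S$ --- goes through unchanged.
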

\begin{proof}
As $\monocl$ contains all identities, it is straightforward to verify that $\tobj$ is both mono-coreflective and epi-reflective in $\monocat$. The computation of \npkernel{}s and \npcokernel{}s in similar contexts has been studied in various sources (such as \cite{GRANDIS13,MARKI13}), and it is easy to check that, when $\cat E$ has pullbacks and $\monocl$ is a \wscm{}, \npkernel{}s and \npcokernel{}s of \npkernel{}s exist in $\monocat$. Moreover, \npnormmono{}s and \npnormepi{}s in $\monocat$ can be characterised (up to isomorphism) as morphisms of the form $(\id{}, s)$ and $(s, \id{})$, respectively, with $s\in\monocl$. Morphisms with \nptrivial{} \npkernel{} are commutative square which are pullbacks in $\cat E$.

A direct check shows that all the relevant limits in $\monocat$ required for \wnormality{} or \normality{} 
can be computed level-wise in $\cat E$. As a result, the stability conditions for \npnormepi{}s follow immediately.
\end{proof}
}
\subsection{Groupoids}
\newcommand{\catGrpd}{\cats{Grpd}}In the category $\catGrpd$ of small groupoids and functors consider the full subcategory $\tobj$ of discrete groupoids (i.e.\ $\tobj\iso\catSet$). Then, with this definition of $\tobj$, $\catGrpd$ is \normal{}. The construction of \npkernel{}s and \npcokernel{}s is described in \cite{GRANDIS13}; a functor $F\colon\cat G\to\cat H$ is a \npnormepi{} if and only if it is strictly surjective (on arrows) and for any arrows $g$ and $g'$ in $\cat G$ such that $Fg=Fg'$ there exist arrows $u$ and $u'$ in $\cat G$ such that $Fu$ and $Fu'$ are identity morphisms in $\cat H$ and such that the compositions $\comp gu$ and $\comp {u'}{g'}$ are defined and equal (cf.\ the analogous condition \ref{cmon-normepi} for commutative monoids). It is a straightforward check that this class of morphisms is pullback-stable in $\catGrpd$.
\subsection{Categories of relations}
\newcommand{\kp}[1]{\textnormal{K}_{#1}}\newcommand{\intersect}{\wedge}
Let $\cat R$ denote any of the following categories of relations on sets, where morphisms are relation-preserving functions:
\begin{enumerate*}[(1)]
    \item\label{refl-rel} the category of reflexive relations,
    \item\label{preord-rel}the category of preorders,
    \item \label{equiv-rel}the category of equivalence relations.
\end{enumerate*}

Let $\tobj$ be the full subcategory of $\cat R$ consisting of sets equipped with the discrete equivalence relation. For any morphism $f\colon (X,\rho)\to (Y,\rho)$ in $\cat R$, its \npkernel{} is given by $\bigl((X,\kp f\intersect \sigma),\id X\bigr)$, where $\kp f$ denotes the kernel pair of $f$ (in $\catSet$) and $\intersect$ denotes the intersection of relations. On the other hand, its \npcokernel{} is given by $\bigl(( Y_0, \sigma_0), p\bigr)$, where $p\colon Y\to Y_0$ is the quotient in $\catSet$ of $Y$ by the equivalence relation on $Y$ generated by $f(\rho)$, and $\sigma_0$ is the relation of type $\cat R$ generated by $p(\sigma)$ on $ Y_0$. By this construction, it follows that a morphism $f\colon (X,\rho)\to(Y,\sigma)$ in $\cat R$ is a \npnormepi{} if and only if $f$ is surjective, $\sigma$ is the relation of type $\cat R$ generated by $f(\rho)$ on $Y$ and the kernel pair $\kp f$ of $f$ is the smallest equivalence relation on $X$ containing $\kp f \intersect \rho$.

It can be shown that when $\cat R$ is either the category of reflexive relations or of equivalence relations (cases \ref{refl-rel} and \ref{equiv-rel}), then it is \normal{}. The key point in proving the pullback-stability of \npnormepi{}s is that, under the hypotheses of \zcref{refl-rel} or \ref{equiv-rel} above, the image $f(\rho)$ of a relation $\rho$ of type $\cat R$ under any \npnormepi{} $f\colon (X,\rho)\to(Y,\sigma)$ in $\cat R$ is automatically a relation of type $\cat R$: namely, a reflexive relation in case \ref{refl-rel} and an equivalence relation in case \ref{equiv-rel}.

The category of preordered sets, instead, is not even \wnormal{}. To prove this, consider the morphisms of preordered sets
\[
\left\{
\begin{tikzcd}[row sep = 1em, column sep =1em]
    1\arrow[r]&2
    &2'\arrow[l]\arrow[r]&3
\end{tikzcd}
\middle\}\,\tikz{\draw[->] (0,0) to node[midway, above]{\scriptsize$f$} (1,0);}\,\middle\{\begin{tikzcd}[row sep = 1em, column sep =1em]
    1\arrow[r,bend left]&2\arrow[l, bend left]
    \end{tikzcd}\right\}
\]
defined by $f(1)=f(3)=1$ and $f(2)=f(2')=2$. Using the above constructions of \npkernel{}s and \npcokernel{}s, the morphism $f$ can be explicitly factorised as $f=\comp qm$, with $q$ the \npcokernel{} of the \npkernel{} of $f$, and $m$ the induced map. Computing the \npkernel{} of $m$, one easily finds that it is not \nptrivial{}, and therefore $\cat R$ cannot be \wnormal{} (nor \normal{}).
\subsection{Inverse commutative monoids}
A commutative monoid $M$ is called an \emph{inverse commutative monoid} if for every $x\in M$ there exists an element $x\inv\in M$, called the \emph{inverse} of $x$, such that
\begin{equation}
\label{inv-axioms}
    xx\inv x=x \quad\textnormal{and}\quad x\inv x x\inv=x\inv.
\end{equation}
(in this subsection we shall use multiplicative notation). Thanks to commutativity, such an inverse is  unique (see \cite{LAWSON}). We denote by $\catICM$ the category of inverse commutative monoids and monoid morphisms (inverses are then automatically preserved).

\begin{remark}
\label{inv-mon-are-om}
     Inverse commutative monoids can be seen as commutative $\Omega$-monoids with $\Omega$ consisting of a unique unary operation $(-)\inv$ satisfying the axioms in \ref{inv-axioms} above. It immediately follows that $\catICM$ is prenormal (see \zcref{omega-cmon}).
\end{remark}

Despite $\catICM$ being pointed, we are now interested in a different class of trivial objects. In a monoid, an element $e$ is said to be \emph{idempotent} if $e^2=e$. A monoid is said to be \emph{idempotent} if all of its elements are idempotent. We denote by $\catECM$ the category of idempotent commutative monoids with monoid morphisms. Clearly, every idempotent commutative monoid is an inverse monoid, where the inverse of an element $e$ is $e$ itself. We now show that $\catICM$ is prenormal with respect to $\catECM$.

We have functors
\[
\begin{tikzcd}
 \catECM \rar[hookrightarrow,bend right, start anchor=east, end anchor=west, yshift=-.2em, "I"']& \catICM\arrow[l,bend right, "E"', start anchor=west, end anchor= east, yshift=.2em],
 \end{tikzcd}
\]
where $I$ denotes the inclusion functor, while $E$ denotes the restriction to idempotents (for any inverse monoid $M$, the subset $E(M)$ of idempotent elements is an idempotent submonoid, and monoid morphisms obviously preserve idempotents). 
The functor $E$ is both left and right adjoint to $I$. The unit $\unin$ of the adjunction $E\adj I$ and the counit $\cunn$ of the adjunction $I\adj E$ are given by
\[
\begin{tikzcd}
    M\rar{\uni M} & E(M) &[2em]E(M)\rar[hookrightarrow]{\cuni M}&M
    \\[-3em]
    x\rar[mapsto] &xx\inv &e\rar[mapsto] &e
\end{tikzcd}
\]
for all inverse commutative monoids $M$. Clearly, $\uni M$ is an epimorphism and $\cuni M$ a monomorphism (in fact, $\mcomp{\cuni M, \uni M}=\id{E(M)})$. Taking $\tobj =\catECM$, we have the following characterisations.
\begin{itemize}[leftmargin=*]
    \item A \npnormmono{} is, up to isomorphism, the inclusion of a normal inverse submonoid $A\inclusion M$, that is a submonoid $A$ closed under inverses and conjugation and containing all idempotents of $M$ (see \cite{LAWSON}).
    \item A morphism $f\colon M\to N$ is a \npnormepi{} if and only if it is surjective and the restriction $E(f)$ to idempotents is an isomorphism. Indeed, every \npnormepi{} is an epimorphism, and its $\tobj$-reflection is always an isomorphism. Vice versa, suppose $f\colon M\to N$ is surjective and that $E(f)$ is an isomorphism. Consider its \npkernel{} $k\colon K\inclusion M$, consisting of elements $x\in M$ such that $f(x)$ is idempotent. Let $g\colon M\to P$ be a morphism in $\catICM$ such that $\comp kg$ is \nptrivial{} (i.e. $g(x)$ is idempotent for all $x\in K$). We must show that if $f(x)=f(y)$, then $g(x)=g(y)$, so that $g$ factors uniquely through $f$. 
    A simple computation shows that
    \begin{equation*}
    \label{eq-idmptn}
       xx\inv=xy\inv x\inv y=yy\inv,
    \end{equation*}
    as they are idempotents and their images via $f$ are equal.
    Since $f(xy\inv)$ is obviously idempotent, we have $xy\inv\in K$, so $g(xy\inv)$ is itself idempotent. We can thus write
    \begin{multline*}
        g(x)=g(xx\inv x)=g(xy\inv x\inv y x)=g(xy\inv)g(x\inv y)g(x)\\=g(x\inv y)g(x)=g(xx\inv y)=g(yy\inv y)=g(y).
    \end{multline*}
\end{itemize}

With these characterisations in place, it is straightforward to verify that the category of inverse commutative monoids is \normal{}. Indeed, $\catICM$ is a variety of universal algebra, and is therefore complete and cocomplete. We have seen that $\tobj=\catECM$ is both mono-coreflective and epi-reflective. In particular, all \npcokernel{}s exist, and can be obtained as pushout along $\tobj$-reflections (dual of \zcref{monocoref}). Now, let $f'$ be the pullback of a \npnormepi{} $f$ along an arbitrary morphism. Clearly, $f'$ is surjective (since pullbacks are computed as in $\catSet$ and thus preserve surjective maps). Moreover, because $E$ is a right adjoint, it preserves pullbacks. Hence,  $E(f')$ appears as the pullback of the isomorphism $E(f)$, and is therefore itself an isomorphism. 
We conclude that $f'$ is a \npnormepi{}.
\begin{remark}
    This example was inspired by the recent talk \cite{ULO}, where the speaker observed that for any regular epimorphism $f \colon X \to Y$ in the category of inverse monoids, the pullback of $f$ along $E(Y)\inclusion Y$ is also a pushout. This observation led us to wonder whether the category might be prenormal.
\end{remark}
\begin{remark}
    Note that, with the exception of \zcref{inv-mon-are-om} which involves pointedness, everything in this subsection also applies to inverse commutative semigroups. See \cite{LAWSON} for more details.
\end{remark}
\subsection{Preordered groups}
Let $\catOrdGrp$ be the category whose objects are preordered groups, namely groups $G$ equipped with a preorder relation $\le$ satisfying $xx'\le yy'$ for all $x\le x'$ and $y\le y'$ in $G$,
and whose arrows are monotone group homomorphisms. This category is equivalent to the one having as objects the pairs $(G,M)$, where $G$ is a group and $M$ is a submonoid of $G$ closed under conjugation, and morphisms $(G,M)\to (H,N)$ given by a group homomorphisms $f\colon G\to H$ such that $f(M)\subseteq N$. Many remarkable properties of this category have been studied in \cite{CLEMENTINO19}. In particular, it was proved that $\catOrdGrp$ is normal (in the sense of \cite{NORMAL}), and therefore \pnormal{} as well.

Alongside the usual class of zero objects, in \cite{MICHEL}, the authors consider another class of trivial objects in $\catOrdGrp$, namely the class $\tobj$ of pairs of the form $(G,0)$, with $G$ any group (these pairs correspond exactly to groups equipped with the discrete preorder). In what follows, we prove that $\catOrdGrp$ is \normal{}.

The subcategory $\tobj$ is mono-coreflective, with the coreflection of a pair $(G,M)$ given by the inclusion $\id G\colon (G,0)\to(G,M)$. Since pullbacks are computed level-wise, we can characterise \npnormmono{}s as maps of the form $\id G\colon (G,M)\to(G,N)$, where $M$ and $N$ are  submonoids of the group $G$ closed under conjugation and with $M\subseteq N$. We have that the \npcokernel{} of such a \npnormmono{} is given by $q\colon (G, N)\to (Q,P)$, where $q\colon G\to Q$ is the quotient in $\catGrp$ of $G$ by the subgroup $M'$ generated by $M$ in $G$, and $P$ is simply $q(N)$ (i.e.\ the regular image of $q\vert\colon N\to Q$ in $\catMon$). Note that $M'$ is explicitly given by the set of finite products $x_1x_2\cdots x_n$ in $G$, with $x_i\in M$ or $x_i\inv\in M$ for all $i\in\{1,\dots, n\}$; this subgroup is automatically normal in $G$, since $M$ is closed under conjugation. The universal property of $((Q,P),q)$ can be proved directly using, in particular, the left lifting property of regular epimorphisms with respect to monomorphisms in $\catMon$. We can then characterise \npnormepi{}s as maps $f\colon (G,M)\to (H,N)$ such that $f(G)=H$, $f(M)=N$ and such that the kernel of $f\colon G\to H$ is generated in $G$ by the kernel of the monoid morphism $f\vert\colon M\to N$ (using the above description of \npcokernel{}s, one can prove that this is the \npcokernel{} of $\id G\colon (G,L)\to (G,M)$, with $L$ being the kernel of the monoid morphism $f\vert\colon M\to N$). Finally, one can show that this class of maps is pullback-stable, using, in particular, the explicit description of the (normal) subgroup  generated by a subset provided above. 

\section*{Acknowledgments}
\addcontentsline{toc}{section}{Acknowledgments}
This research was conducted while both authors were affiliated with INdAM -- Istituto Nazionale di Alta Matematica ‘Francesco Severi’, Gruppo Nazionale per le Strutture Algebriche, Geometriche e le loro Applicazioni (GNSAGA).
\phantomsection
\addcontentsline{toc}{section}{References}
\printbibliography
\end{document}